\newtheorem{theorem}{Theorem}[section] 
\newtheorem{lemma}[theorem]{Lemma}
\newtheorem{observation}[theorem]{Observation}
\newcommand{\dist}{\mathrm{dist}}
\title{Firefighting on the Hexagonal Grid and on Infinite Trees}
\author{Abdullah Dean\thanks{University of Illinois at Urbana-Champaign, email: \texttt{anoora2@Illinois.edu}} \and
Sean English\thanks{University of Illinois at Urbana-Champaign, email: \texttt{senglish@Illinois.edu}} \and
Tongyun Huang\thanks{University of Illinois at Urbana-Champaign, email: \texttt{tongyun2@Illinois.edu}} \and
Robert A. Krueger\thanks{University of Illinois at Urbana-Champaign, email: \texttt{rak5@Illinois.edu}} \and
Andy Lee\thanks{University of Illinois at Urbana-Champaign, email: \texttt{andy2@Illinois.edu}} \and
Mose Mizrahi\thanks{University of Illinois at Urbana-Champaign, email: \texttt{mosem2@Illinois.edu}} \and
Casey Wheaton-Werle\thanks{University of Illinois at Urbana-Champaign, email: \texttt{caseydw2@Illinois.edu}}}
\date{}
\begin{document}

\maketitle

\begin{abstract}
    The firefighter problem with $k$ firefighters on an infinite graph $G$ is an iterative graph process, defined as follows: Suppose a fire breaks out at a given vertex $v\in V(G)$ on Turn 1. On each subsequent even turn, $k$ firefighters protect $k$ vertices that are not on fire, and on each subsequent odd turn, any vertex that is on fire spreads the fire to all adjacent unprotected vertices. The firefighters' goal is to eventually stop the spread of the fire. If there exists a strategy for $k$ firefighters to eventually stop the spread of the fire, then we say $G$ is $k$-containable.
    
    We consider the firefighter problem on the \emph{hexagonal grid}, which is the graph whose vertices and edges are exactly the vertices and edges of a regular hexagonal tiling of the plane. It is not known if the hexagonal grid is $1$-containable. In [T.~Gaven\v{c}iak, J.~Kratochv\'{\i}l and P.~Pra{\l}at. Firefighting on square, hexagonal, and triangular grids. \textit{Discrete Math.}, 337:142-155, 2014.], it was shown that if the firefighters have one firefighter per turn and one extra firefighter on two turns, the firefighters can contain the fire. We improve on this result by showing that even with only one extra firefighter on one turn, the firefighters can still contain the fire.
    
    In addition, we explore $k$-containability for \emph{birth sequence trees}, which are rooted trees having the property that every vertex at the same level has the same degree. A \emph{birth sequence forest} is a forest, each component of which is a birth sequence tree. For birth sequence trees and forests, the fire always starts at the root of each tree. We provide a pseudopolynomial time algorithm to decide if all the vertices at a fixed level can be protected or not.
\end{abstract}

\section{Introduction}
The problem of firefighters on graphs studies the following iterative process: given a graph $G$, a subset of the vertices are initially on fire on turn $1$. Then in alternating turns some vertices are protected by firefighters and the fire spreads to all unprotected vertices adjacent to a vertex on fire. Once a vertex has been protected by a firefighter it is protected for the remainder of the process. Similarly once a vertex is on fire it remains on fire.

More formally, let $V$ be the set of vertices, let $F^{(t)}$ be the set of vertices on fire on turn $t$, and let $P^{(t)}$ be the set of protected vertices on turn $t$. Initially, on turn $1$, $F^{(1)}$ is some non-empty subset of $V$, and $P^{(1)}$ is empty. On an even turn $2t$, we let $P^{(2t)}$ be the union of $P^{(2t-1)}$ and a subset of $V \setminus F^{(2t-1)}$ and let $F^{(2t)} = F^{(2t-1)}$. On an odd turn $2t + 1$, where $t \geq 1$, we let $F^{(2t+1)}$ be $N[F^{(2t)}] \setminus P^{(2t)}$, the closed neighborhood of $F^{(2t)}$ except for the vertices in $P^{(2t)}$, and let $P^{(2t + 1)} = P^{(2t)}$. For simplicity when we say a firefighter protects a vertex on turn $t$ we assume this $t$ to be even.

Let $t \in \mathbb{N}$. On turn $t$, we say that a vertex is \textbf{on fire} or \textbf{burning} if it is in $F^{(t)}$, \textbf{protected} if it is in $P^{(t)}$, and \textbf{unprotected} if it is in neither of these sets. We say a vertex is \textbf{saved} if it is impossible for it to ever be on fire. That is, a vertex is saved if it is in $P^{(t)}$ or in a component of the subgraph induced by $V \setminus P^{(t)}$ with no burning vertices.

When firefighting on an infinite graph, we say the fire is \textbf{contained} if all but finitely many vertices are saved. We say an infinite graph given with a subset of vertices initially on fire is \textbf{$k$-containable} if the fire can be contained by protecting at most $k$ vertices every even turn.

Problems related to $k$-containability also exist on finite graphs. For example, there is the NP-Complete decision problem of whether it is possible to save all vertices in a set $S$ by protecting at most $k$ vertices every even turn \cite{NPCCubic}.

Firefighting on graphs can be used to model network spread, and can be used to understand the spread of computer viruses, misinformation, and infectious diseases. Indeed, similar problems arise in SIR epidemic models where a disease seeded at an initial set of vertices spreads through a network. Effects of vaccination programs in such models where vertices are granted immunity from infection have been extensively studied \cite{EpiSurvey}.
Conversely, the firefighters can also be thought of as an adversary; for example, the fires and the firefighters could model a broadcast signal and an adversary trying to censor it in a communication network. In this context, it would be good for the communication network to be robust against censorship. There is a significant body of existing related work, some of which is discussed in \cite{FireSurvey}.

\subsection{The Hexagonal Grid}
The problem of $k$-containability has been studied on various infinite graphs. The infinite triangular grid, formed by tiling the plane with equilateral triangles and letting the corners be vertices, with a single initially burning vertex is conjectured to not be $2$-containable \cite{Grids}. Here, we focus on the infinite hexagonal grid, formed by tiling the plane with equilateral hexagons with the corners as vertices. It is conjectured that the hexagonal grid is not $1$-containable, \cite{hexgridconj}.

It is known that all orientations of the hexagonal grid are $1$-containable as in a directed graph fire can only spread to out-neighbors, \cite{SlideThm}. The following theorem suggests that if the hexagonal grid is not $1$-containable, then it is ``barely'' not $1$-containable.

\begin{theorem}[\cite{Grids}]\label{theorem 2extrafirefighters}
If it is possible to use an additional firefighter at two turns, $2t_1$ and $2t_2$ possibly with $2t_1 = 2t_2$, then one firefighter every turn is sufficient to contain the fire on the hexagonal grid with a single initially burning vertex.
\end{theorem}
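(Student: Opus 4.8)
The plan is to hand the firefighters an explicit barrier-building strategy and to analyze the resulting process as a race between the growing fire and the growing barrier. First I would fix the ``brick-wall'' coordinatization of the hexagonal grid: take the vertex set to be $\mathbb{Z}^2$, join every $(i,j)$ to its horizontal neighbors $(i\pm 1,j)$, and join $(i,j)$ to $(i,j+1)$ precisely when $i+j$ is even. Every vertex then has degree $3$, the bounded faces are hexagons, and the fire's two propagation speeds are sharply asymmetric: in each occupied row the fire gains one column per spreading step, but it climbs only one row every two spreading steps, because the vertical edges available for upward progress alternate parity from row to row. Hence from a single source the unobstructed fire after $t$ spreading steps fills, up to lower-order terms, the ``flat diamond''
\[
  R_t \;=\; \{(x,y)\in\mathbb{Z}^2 : |x| + 2|y| \le t\},
\]
which is twice as wide as it is tall. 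This asymmetry is the resource the firefighters exploit.

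The strategy I would adopt is to have the firefighters protect a single closed curve enclosing the source, built incrementally from its two growing ends and adapted to the flat-diamond geometry, so that the portions facing the fast horizontal direction are reinforced earliest. The key conceptual point, and the reason a naive ``perimeter-versus-budget'' count is misleading, is that a \emph{partial} barrier already reshapes the fire: once an initial segment is protected, the fire can reach the vertices beyond it only by detouring around, so every partial barrier pushes the deadlines of the not-yet-built vertices later. I would therefore analyze the process dynamically, tracking the evolving fire front against the partially built curve and maintaining the invariant that the construction stays a bounded number of vertices ahead of the front at each active end. The slow vertical spread is exactly what lets one protection per turn keep pace, and the two extra firefighters are spent to establish the necessary head start at the two most dangerous moments.

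The step I expect to be the main obstacle is showing that the marginal rate is genuinely break-even and then quantifying the shortfall as exactly two. The belief that the hexagonal grid is not $1$-containable means that, with no help, the lead the firefighters can maintain neither grows nor lets the curve close: each end reaches its capping tip just as the fire does. The delicate part is the bookkeeping at the two pointy tips near $(\pm X,0)$, where the fire charges in at full horizontal speed and where the barrier must turn to close; turning costs one step of lead at each tip, and tracking the parity of the bipartite grid, which dictates precisely which vertices carry the usable vertical edges and hence exactly when a turn can be made without losing the fire, is where the count ``one extra firefighter at each of two turns'' is forced. Once the curve closes, the fire is trapped in a finite region, so all but finitely many vertices are saved and the fire is contained, completing the proof.
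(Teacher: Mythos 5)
There is a genuine gap, and it sits exactly where you placed it yourself. Your write-up is a plan rather than a proof: you identify ``showing that the marginal rate is genuinely break-even and then quantifying the shortfall as exactly two'' as the main obstacle, and then you assert the answer rather than derive it. That bookkeeping is essentially the entire content of the known proofs (the one in \cite{Grids} that this theorem is cited from, and the strengthened version proved in Section \ref{section hex grid} of this paper): an explicit list of which vertex is protected on which turn, together with a verification --- via a closed-form characterization of graph distance such as Equation \eqref{equation distance in hex grid} --- that the $r$-th protected vertex lies at distance exactly $r$ from $f$, so that every move is legal and the barrier stays exactly one step ahead of the front, including through every bend. Declaring that the deficit ``is forced'' to be two extra firefighters is assuming the conclusion, not proving it.

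Your geometric picture also has an error that undermines the proposed strategy. In the brick-wall coordinates the ball of radius $t$ is a hexagon, roughly $\{(x,y): |x|+|y|\le t,\ 2|y|\le t+1\}$, not the diamond $\{|x|+2|y|\le t\}$: the fire advances at full speed toward six corners, and the slow one-row-per-two-steps climb occurs only in the narrow cone directly above and below the source. A closed curve grown from two ends around the source therefore cannot keep pace by exploiting vertical slowness --- the ring at distance $d$ contains about $3d$ vertices while only about $d$ firefighters have been placed by the time the fire reaches it. Your observation that a partial barrier delays the fire behind it is the right key idea, but it must be exploited by a specific global routing: the working strategies first build two rays that, extended, would protect two-thirds of the plane, then bend them into a long parallel strip, and finally spiral one ray around a center $c$ placed far from $f$, so that the extra distance the fire must travel around each bend pays for the perimeter still to be built. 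Your proposal contains no such routing and no argument that the curve ever closes before the fire escapes; moreover, your guess that the two extra firefighters are spent at two symmetric ``pointy tips'' does not match the strategies known to work, where the extra protection is used early, to launch or accelerate the initial rays.
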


Our main contribution is an improvement to this result.

\begin{theorem}\label{theorem main theorem}
If it is possible to use an additional firefighter at a single turn, $2\tau$, then one firefighter every turn is sufficient to contain the fire on the hexagonal grid with a single initially burning vertex.
\end{theorem}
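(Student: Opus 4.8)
The plan is to exhibit an explicit strategy for one firefighter per turn, supplemented by the single extra firefighter at turn $2\tau$, and to prove it contains the fire by building a finite closed barrier of protected vertices around the origin. First I would fix the standard ``brick-wall'' model of the hexagonal grid: take the vertex set to be $\mathbb{Z}^2$, join $(x,y)$ to $(x\pm 1,y)$ always, and join $(x,y)$ to $(x,y+1)$ exactly when $x+y$ is even. This is a $3$-regular graph isomorphic to the hexagonal grid, and we may assume the fire starts at the origin. Because the fire is contained if and only if only finitely many vertices ever burn, it suffices to construct a finite cycle of protected vertices separating the burning region from infinity, with each cycle vertex protected before the fire can reach it; crucially, the barrier itself blocks fire, so a vertex may be protected later than its $\dist$ to the origin would naively allow.

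The strategy builds the barrier as a single enclosing loop grown one vertex per even turn from a fixed starting vertex, curling around the fire until it closes. The decisive quantitative fact is a matched-speed race: one firefighter per turn extends the loop by one vertex every two turns, while the fire advances its front by one step every two turns, since it spreads only on odd turns. Thus the growing tip of the barrier and the nearest point of the fire move at the same rate, and the only slack in the contest comes from the extra grid-distance the loop must travel in order to wrap all the way around the fire. I would make this precise with an invariant maintained from some turn onward: after turn $2t$, the portion of the loop already protected, together with the as-yet-unreached region it bounds, separates the current fire from all but a controlled ``mouth'' of the loop, and the tip of the barrier leads the adjacent fire front by a nonnegative, nonincreasing margin.

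The early turns need separate handling, since near the origin the fire reaches two new vertices for each one the firefighter can protect, so the loop cannot be started immediately; I would first let the fire grow for a bounded number of turns, then anchor the barrier at a radius from which the matched-speed race can begin with the tip at least even with the fire. Tracking the invariant as the loop curls around, the margin decreases to exactly one vertex at the moment the two ends of the loop must meet: by a parity argument on the brick-wall coloring, the final junction forces one vertex to be needed on the same even turn as another junction vertex, which a single firefighter cannot supply. Deploying the extra firefighter at that turn $2\tau$ protects both, closing the loop and completing the enclosure.

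The main obstacle is this last accounting: proving that a correctly routed loop falls behind the fire by exactly one vertex rather than two, which is precisely the improvement over Theorem~\ref{theorem 2extrafirefighters}. The prior strategy effectively pays a two-vertex deficit, and reducing it to one requires routing the barrier so that its total excess travel over the fire's spread is minimized and so that the two unavoidable ``debts'' incurred while wrapping around are made to fall on a single turn. I expect the bulk of the work to be verifying the invariant through the turns where the loop changes direction to curl around the fire, where the brick-wall adjacency is irregular and the tip-versus-front margin is most delicate, and in confirming that the single extra protection at $2\tau$ indeed seals every remaining opening.
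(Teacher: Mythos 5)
Your proposal has the right global shape (an explicit strategy whose protected vertices form a closed separating barrier, verified move-by-move for legality), but two of its load-bearing claims do not hold as stated. First, you treat the turn $2\tau$ of the extra firefighter as something the strategy gets to choose: you bank the extra protection for the one even turn on which the closing junction of your loop needs two vertices. But in the theorem, as in Theorem \ref{theorem 2extrafirefighters} that it strengthens, the turn $2\tau$ is given to the firefighters rather than selected by them; the paper's strategy does not even need to know $\tau$ in advance, spends the extra firefighter the moment it becomes available --- using it in Section \ref{acc} to let the second protective ray catch up to the first --- and then calibrates everything downstream (the length of the strip in Section \ref{protecc}, the center $c$ and the legs $L_1,\dots,L_4$ of the spiral in Section \ref{spiral}) as explicit functions of $\tau$. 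Your argument, even if completed, would only establish the existential version of the statement (``there is \emph{some} turn at which one extra firefighter suffices''), which is strictly weaker than what is claimed.

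Second, your ``matched-speed race'' invariant fails for a single loop that simply curls around the fire. It is true that the barrier gains one vertex per spread step while each burning vertex advances one step per spread step, but the quantity a wrap-around must outrun is the fire's \emph{perimeter}, and $|P_d|$ grows linearly in $d$ (roughly $3d$ in the hexagonal grid), so the front acquires several new boundary vertices per spread step against your one protected vertex. Hence the margin ``tip of barrier versus adjacent fire front'' is not nonnegative and nonincreasing; it goes negative almost immediately for any loop anchored at bounded radius, and the deficit at closing time is unbounded, not ``exactly one vertex.'' The missing idea is an intermediate confinement step: the paper first builds two radial rays that keep exact pace with the fire (each $v_r$, protected on turn $2r$, satisfies $\dist(f,v_r)=r$ by Observation \ref{observation distance protecting two thirds}, while the fire is confined to distance $r-1$) and thereby cuts off two thirds of the directions; it then bends them into parallel rays that trap the fire in a strip of width $O(\tau)$ (Lemma \ref{striplemma}). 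Only once the fire is reduced to a one-dimensional advance along that strip does wrapping around it become a genuinely matched-speed race, which the spiral around $c$ wins by the distance computations of Lemma \ref{lemma distances from the center of the spiral}. Without the strip, the final accounting you hope for has no basis, and the parity argument you invoke to place the one-vertex debt at a single even turn is not substantiated.
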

Our theorem shows that the hexagonal grid conjecture, if true, is in some sense sharp; it would not be true if even a single extra firefighter was available. Our firefighting strategy, like the two-extra-firefighters strategy in \cite{Grids}, does not need to know in advance which turn the extra firefighters can be used.

\subsection{Birth Sequence Trees}
We also study the firefighter problem on rooted trees with their roots on fire. On a rooted tree, the \textbf{depth} of a vertex refers to its distance from the root. The root has depth $0$. Such trees have properties that simplify the protection of vertices. Notably, it is always optimal to use \textbf{hot} strategies on such trees, which are strategies where only vertices with burning neighbors are protected.

It is known that on a finite tree with its root on fire, checking whether all vertices in a set $S$ can be saved with $k$ firefighters every turn is NP-Complete when the maximum degree of the tree is at least $k + 2$. \cite{TreesNPC}

We study a more restricted class of trees: \textbf{birth sequence} trees. These are rooted trees characterized by a \textbf{birth sequence} $d_0,d_1,d_2,\dots$ such that vertices at depth $k$ have $d_k$ children. These trees have the property that for any $k$, if all vertices at depth less than $k$ are removed, then all component trees in the resulting forest are isomorphic. For an example of a birth sequence tree, consider the infinite binary tree whose birth sequence is $d_k = 2$ for all $k \in \mathbb{Z}_{\geq0}$.

Our first result is a necessary and sufficient condition for the $k$-containability of an infinite birth sequence tree with its root on fire: 

\begin{theorem}\label{theorem infinite trees}
Let $T$ be an infinite rooted tree with birth sequence $d_0,d_1,d_2,\dots$, with its root initially on fire. Then $T$ is $k$-containable if and only if there exists some $t \in \mathbb{Z}_{\geq0}$ such that 
\[
\prod_{i=0}^td_i - k\bigg(1+\sum_{i=1}^t\prod_{j=i}^td_j\bigg)\leq 0.
\]
\end{theorem}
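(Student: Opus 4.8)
The plan is to reduce the firefighting process on $T$ to a one-dimensional integer recursion tracking the size of the burning \emph{front} at each depth, and then to show that the greedy strategy---protect $k$ front vertices every turn---is optimal, so that containment is possible exactly when greedy succeeds.

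First I would set up the recursion. Using the optimality of hot strategies, at turn $2m$ the firefighters protect only vertices at depth $m$, namely children of the vertices burning at depth $m-1$. If $b_{m-1}$ vertices burn at depth $m-1$, then there are $b_{m-1}d_{m-1}$ such children, and writing $p_m \le k$ for the number protected, the number burning at depth $m$ after the fire spreads is $b_m = b_{m-1}d_{m-1} - p_m$, with $b_0 = 1$. The fire is contained precisely when $b_m = 0$ for some $m$, since then only the finitely many vertices of depth $<m$ ever burn; conversely, if $b_m > 0$ for every $m$, then at least one new vertex burns at each depth and infinitely many vertices are lost.

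Next I would analyze the greedy choice $p_m = k$ (or all remaining front vertices, when fewer than $k$ remain) and unfold the linear recursion $b_m = b_{m-1}d_{m-1} - k$ to obtain the closed form $b_{t+1} = \prod_{i=0}^t d_i - k\bigl(1 + \sum_{i=1}^t \prod_{j=i}^t d_j\bigr)$. This is exactly the left-hand side of the inequality in the statement, so the stated condition is equivalent to ``$b_{t+1} \le 0$ for some $t \ge 0$,'' i.e.\ greedy drives the front to zero. This immediately yields the sufficiency direction: if the inequality holds at some $t$, then the greedy front at depth $t+1$ is nonpositive, meaning at most $k$ front vertices remained and the firefighters protected all of them, so the fire stops and is contained. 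I would also note that since each $d_i \ge 1$, once the linear recursion becomes nonpositive it stays nonpositive, so there is no ambiguity in ``first time it reaches zero.''

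The main work is the necessity direction: if $b_{t+1} > 0$ for all $t$, then no strategy contains the fire. The key claim is that the greedy front is minimal among all strategies. Invoking hot-strategy optimality restricts attention to strategies protecting only front vertices, so an arbitrary strategy satisfies $b_m' = b_{m-1}'d_{m-1} - p_m'$ with $p_m' \le k$. Here the birth-sequence structure is essential: all subtrees rooted at a fixed depth are isomorphic, so only the \emph{number} of vertices protected matters, not which ones, and reducing the front is never harmful. A straightforward induction then finishes the argument: $b_0' = b_0 = 1$, and if $b_{m-1}' \ge b_{m-1}$ then $b_m' = b_{m-1}'d_{m-1} - p_m' \ge b_{m-1}d_{m-1} - k = b_m$, using $d_{m-1} \ge 1$ and $p_m' \le k$. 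Hence $b_m' \ge b_m$ for all $m$, so if greedy never reaches zero then neither does any strategy, and the fire burns forever, contradicting containment. I expect the delicate point to be justifying that protecting non-front (``cold'') vertices never helps on an infinite tree; this is precisely where the given optimality of hot strategies, combined with the self-similarity of the birth-sequence tree, does the heavy lifting.
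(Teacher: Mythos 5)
Your proposal is correct and follows essentially the same route as the paper: reduce to hot strategies, use the self-similarity of the birth sequence tree to collapse the process to the one-dimensional front recursion $f_{t+1}=\max\{0,f_t d_t-k\}$, and unfold it to the stated closed form. The only addition is that you explicitly prove greedy ($p_m=k$) dominates any strategy with $p_m'\leq k$ by induction, a point the paper's proof takes for granted when it writes the recursion directly with $-k$.
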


We also provide a generalization of this theorem for forests of infinite birth sequence trees, each with its root on fire. Informed by this generalization, we provide an algorithmic result as well. For forests of birth sequence trees with burning roots containing $m$ trees with height $n$, we give a dynamic programming algorithm that can determine in $O(m\binom{m + k - 1}{m - 1}n(kn)^m)$ steps whether it is possible to save all leaves with $k$ firefighters every turn.

\subsection{Further Terminology}
\textbf{Vulnerable} vertices are unprotected vertices that have neighbors on fire. \textbf{Actively burning} vertices are burning vertices with vulnerable neighbors. Every odd turn after turn $1$, fire spreads from the actively burning vertices to the vulnerable vertices. We will call a strategy \textbf{hot} if we only protect vulnerable vertices (assuming there are at least as many vulnerable vertices as vertices we can protect this round).

\section{The Hexagonal Grid}\label{section hex grid}

In this section, we prove Theorem \ref{theorem main theorem}. Our strategy is very similar to the one given in \cite{Grids} to prove Theorem \ref{theorem 2extrafirefighters}, however we optimize the strategy at certain points to contain the fire without a second extra firefighter.

For the strategy description and proof, we fix some notation. Let $V$ be the set of vertices of the hexagonal grid. Let $\tau^* \in \mathbb{Z}_{>0}$ be such that at turn $2\tau^*$, two firefighters can be used. To simplify the proof, we wish to use our extra firefighter on turn $2\tau$, where 
\[
\tau=\begin{cases} \tau^*&\text{ if }\tau\text{ is odd,}\\
\tau^*+1&\text{ if }\tau^*\text{ is even.}
\end{cases}
\]
In this way, we can enforce that $\tau$ is odd, and if $\tau^*$ is even, we will play our extra firefighter in the place we would if we were given the firefighter on turn $2(\tau^*+1)=2\tau$ instead of turn $2\tau^*$.

Parts of our strategy are essentially the same as the strategy given in \cite{Grids}, but for completeness we provide all the details here. To be able to address vertices of the hexagonal grid, we draw the hexagonal grid on the Cartesian plane with regular hexagons, here the initial vertex on fire, $f$, is at the origin, and the grid is oriented such that there is a vertex adjacent to $f$ directly above it, and that every edge of the graph has length $1$. Note that throughout this section, every reference to distance will be distance in the hexagonal grid, not Euclidean distance. To be able to address vertices without using square roots or fractions, we make a change of coordinates: Let $i = \frac{2}{\sqrt{3}}x$, and let $j = 2y$. The vertex $(i,j)$ corresponds to the point $(x,y) = (\frac{i\sqrt{3}}{2}, \frac{j}{2})$ on the Cartesian plane. See Figure \ref{Coordinate System} for this mapping of the hexagonal grid on the Cartesian plane. Note that $(i,j)$ is a vertex of the hexagonal grid if and only if $(i\bmod{2}, j\bmod{6}) \in \{(0,0),(0,2),(1,3),(1,5)\}$.

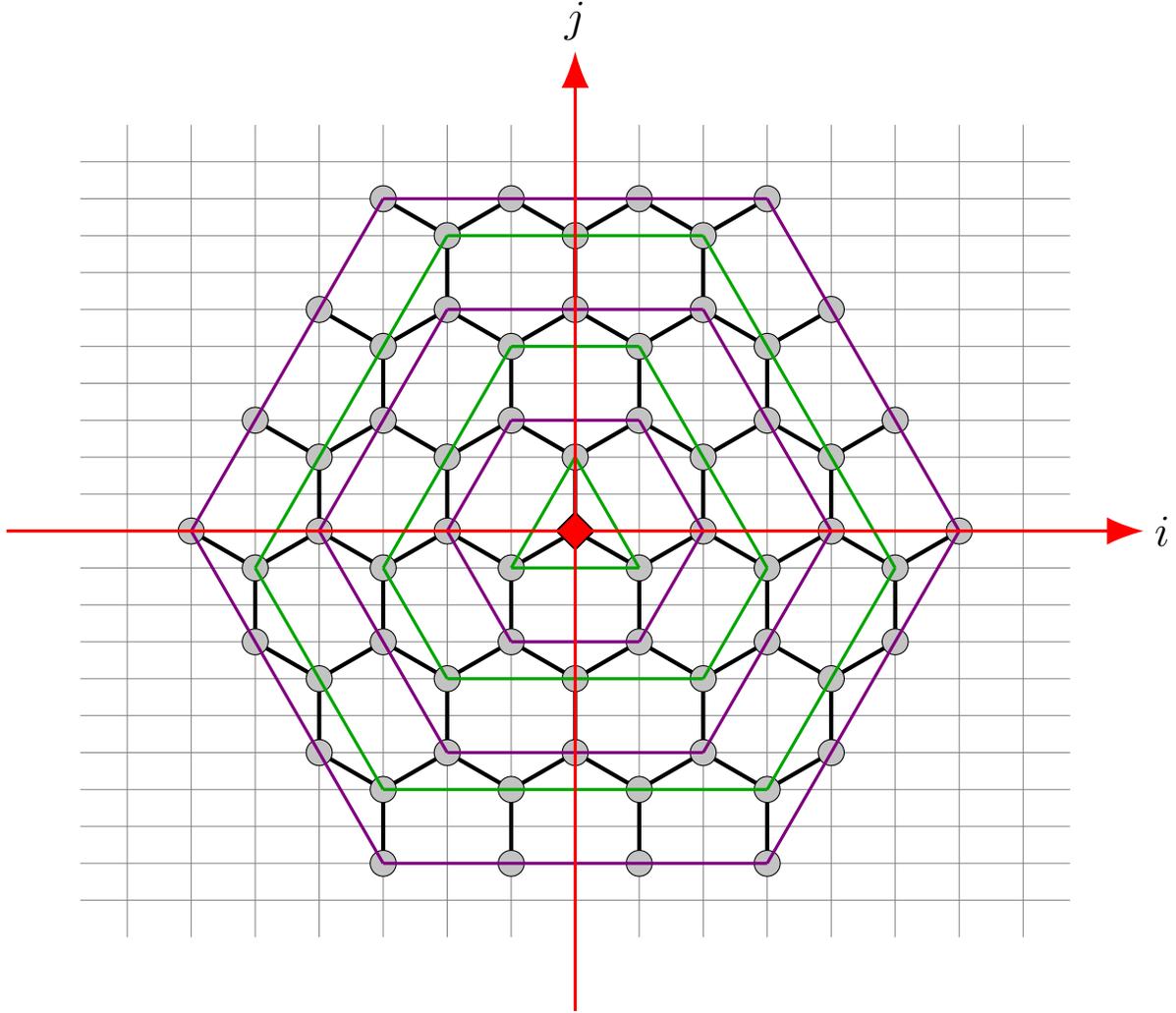
\begin{figure}[!ht]
\centering
\begin{tikzpicture}
\definecolor{lightgray}{RGB}{195,195,195}
\definecolor{darkgreen}{RGB}{0,165,0}
\draw [very thin,gray] (-6.696152422706632,-5.0)--(6.696152422706632,-5.0);
\draw [very thin,gray] (-6.696152422706632,-4.5)--(6.696152422706632,-4.5);
\draw [very thin,gray] (-6.696152422706632,-4.0)--(6.696152422706632,-4.0);
\draw [very thin,gray] (-6.696152422706632,-3.5)--(6.696152422706632,-3.5);
\draw [very thin,gray] (-6.696152422706632,-3.0)--(6.696152422706632,-3.0);
\draw [very thin,gray] (-6.696152422706632,-2.5)--(6.696152422706632,-2.5);
\draw [very thin,gray] (-6.696152422706632,-2.0)--(6.696152422706632,-2.0);
\draw [very thin,gray] (-6.696152422706632,-1.5)--(6.696152422706632,-1.5);
\draw [very thin,gray] (-6.696152422706632,-1.0)--(6.696152422706632,-1.0);
\draw [very thin,gray] (-6.696152422706632,-0.5)--(6.696152422706632,-0.5);
\draw [very thin,gray] (-6.696152422706632,0.5)--(6.696152422706632,0.5);
\draw [very thin,gray] (-6.696152422706632,1.0)--(6.696152422706632,1.0);
\draw [very thin,gray] (-6.696152422706632,1.5)--(6.696152422706632,1.5);
\draw [very thin,gray] (-6.696152422706632,2.0)--(6.696152422706632,2.0);
\draw [very thin,gray] (-6.696152422706632,2.5)--(6.696152422706632,2.5);
\draw [very thin,gray] (-6.696152422706632,3.0)--(6.696152422706632,3.0);
\draw [very thin,gray] (-6.696152422706632,3.5)--(6.696152422706632,3.5);
\draw [very thin,gray] (-6.696152422706632,4.0)--(6.696152422706632,4.0);
\draw [very thin,gray] (-6.696152422706632,4.5)--(6.696152422706632,4.5);
\draw [very thin,gray] (-6.696152422706632,5.0)--(6.696152422706632,5.0);
\draw [very thin,gray] (-6.06217782649107,-5.5)--(-6.06217782649107,5.5);
\draw [very thin,gray] (-5.196152422706632,-5.5)--(-5.196152422706632,5.5);
\draw [very thin,gray] (-4.330127018922193,-5.5)--(-4.330127018922193,5.5);
\draw [very thin,gray] (-3.4641016151377544,-5.5)--(-3.4641016151377544,5.5);
\draw [very thin,gray] (-2.598076211353316,-5.5)--(-2.598076211353316,5.5);
\draw [very thin,gray] (-1.7320508075688772,-5.5)--(-1.7320508075688772,5.5);
\draw [very thin,gray] (-0.8660254037844386,-5.5)--(-0.8660254037844386,5.5);
\draw [very thin,gray] (0.8660254037844386,-5.5)--(0.8660254037844386,5.5);
\draw [very thin,gray] (1.7320508075688772,-5.5)--(1.7320508075688772,5.5);
\draw [very thin,gray] (2.598076211353316,-5.5)--(2.598076211353316,5.5);
\draw [very thin,gray] (3.4641016151377544,-5.5)--(3.4641016151377544,5.5);
\draw [very thin,gray] (4.330127018922193,-5.5)--(4.330127018922193,5.5);
\draw [very thin,gray] (5.196152422706632,-5.5)--(5.196152422706632,5.5);
\draw [very thin,gray] (6.06217782649107,-5.5)--(6.06217782649107,5.5);
\draw [ultra thick] (-0.8660254, -0.5)--(0, 0);
\draw [ultra thick] (0.8660254, -0.5)--(0, 0);
\draw [ultra thick] (0, 1)--(0, 0);
\draw [ultra thick] (-1.732051, 0)--(-0.8660254, -0.5);
\draw [ultra thick] (-0.8660254, -1.5)--(-0.8660254, -0.5);
\draw [ultra thick] (1.732051, 0)--(0.8660254, -0.5);
\draw [ultra thick] (0.8660254, -1.5)--(0.8660254, -0.5);
\draw [ultra thick] (-0.8660254, 1.5)--(0, 1);
\draw [ultra thick] (0.8660254, 1.5)--(0, 1);
\draw [ultra thick] (-2.598076, -0.5)--(-1.732051, 0);
\draw [ultra thick] (-1.732051, 1)--(-0.8660254, 1.5);
\draw [ultra thick] (-1.732051, 1)--(-1.732051, 0);
\draw [ultra thick] (-1.732051, -2)--(-0.8660254, -1.5);
\draw [ultra thick] (0, -2)--(-0.8660254, -1.5);
\draw [ultra thick] (0, -2)--(0.8660254, -1.5);
\draw [ultra thick] (2.598076, -0.5)--(1.732051, 0);
\draw [ultra thick] (1.732051, 1)--(0.8660254, 1.5);
\draw [ultra thick] (1.732051, 1)--(1.732051, 0);
\draw [ultra thick] (1.732051, -2)--(0.8660254, -1.5);
\draw [ultra thick] (-0.8660254, 2.5)--(-0.8660254, 1.5);
\draw [ultra thick] (0.8660254, 2.5)--(0.8660254, 1.5);
\draw [ultra thick] (-3.464102, 0)--(-2.598076, -0.5);
\draw [ultra thick] (-2.598076, -1.5)--(-1.732051, -2);
\draw [ultra thick] (-2.598076, -1.5)--(-2.598076, -0.5);
\draw [ultra thick] (-2.598076, 1.5)--(-1.732051, 1);
\draw [ultra thick] (-1.732051, -3)--(-1.732051, -2);
\draw [ultra thick] (0, -3)--(0, -2);
\draw [ultra thick] (3.464102, 0)--(2.598076, -0.5);
\draw [ultra thick] (2.598076, -1.5)--(1.732051, -2);
\draw [ultra thick] (2.598076, -1.5)--(2.598076, -0.5);
\draw [ultra thick] (2.598076, 1.5)--(1.732051, 1);
\draw [ultra thick] (1.732051, -3)--(1.732051, -2);
\draw [ultra thick] (-1.732051, 3)--(-0.8660254, 2.5);
\draw [ultra thick] (0, 3)--(-0.8660254, 2.5);
\draw [ultra thick] (0, 3)--(0.8660254, 2.5);
\draw [ultra thick] (1.732051, 3)--(0.8660254, 2.5);
\draw [ultra thick] (-4.330127, -0.5)--(-3.464102, 0);
\draw [ultra thick] (-3.464102, 1)--(-2.598076, 1.5);
\draw [ultra thick] (-3.464102, 1)--(-3.464102, 0);
\draw [ultra thick] (-3.464102, -2)--(-2.598076, -1.5);
\draw [ultra thick] (-2.598076, 2.5)--(-1.732051, 3);
\draw [ultra thick] (-2.598076, 2.5)--(-2.598076, 1.5);
\draw [ultra thick] (-2.598076, -3.5)--(-1.732051, -3);
\draw [ultra thick] (-0.8660254, -3.5)--(-1.732051, -3);
\draw [ultra thick] (-0.8660254, -3.5)--(0, -3);
\draw [ultra thick] (0.8660254, -3.5)--(0, -3);
\draw [ultra thick] (0.8660254, -3.5)--(1.732051, -3);
\draw [ultra thick] (4.330127, -0.5)--(3.464102, 0);
\draw [ultra thick] (3.464102, 1)--(2.598076, 1.5);
\draw [ultra thick] (3.464102, 1)--(3.464102, 0);
\draw [ultra thick] (3.464102, -2)--(2.598076, -1.5);
\draw [ultra thick] (2.598076, 2.5)--(1.732051, 3);
\draw [ultra thick] (2.598076, 2.5)--(2.598076, 1.5);
\draw [ultra thick] (2.598076, -3.5)--(1.732051, -3);
\draw [ultra thick] (-1.732051, 4)--(-1.732051, 3);
\draw [ultra thick] (0, 4)--(0, 3);
\draw [ultra thick] (1.732051, 4)--(1.732051, 3);
\draw [ultra thick] (-5.196152, 0)--(-4.330127, -0.5);
\draw [ultra thick] (-4.330127, -1.5)--(-3.464102, -2);
\draw [ultra thick] (-4.330127, -1.5)--(-4.330127, -0.5);
\draw [ultra thick] (-4.330127, 1.5)--(-3.464102, 1);
\draw [ultra thick] (-3.464102, -3)--(-2.598076, -3.5);
\draw [ultra thick] (-3.464102, -3)--(-3.464102, -2);
\draw [ultra thick] (-3.464102, 3)--(-2.598076, 2.5);
\draw [ultra thick] (-2.598076, -4.5)--(-2.598076, -3.5);
\draw [ultra thick] (-0.8660254, -4.5)--(-0.8660254, -3.5);
\draw [ultra thick] (0.8660254, -4.5)--(0.8660254, -3.5);
\draw [ultra thick] (5.196152, 0)--(4.330127, -0.5);
\draw [ultra thick] (4.330127, -1.5)--(3.464102, -2);
\draw [ultra thick] (4.330127, -1.5)--(4.330127, -0.5);
\draw [ultra thick] (4.330127, 1.5)--(3.464102, 1);
\draw [ultra thick] (3.464102, -3)--(2.598076, -3.5);
\draw [ultra thick] (3.464102, -3)--(3.464102, -2);
\draw [ultra thick] (3.464102, 3)--(2.598076, 2.5);
\draw [ultra thick] (2.598076, -4.5)--(2.598076, -3.5);
\draw [ultra thick] (-2.598076, 4.5)--(-1.732051, 4);
\draw [ultra thick] (-0.8660254, 4.5)--(-1.732051, 4);
\draw [ultra thick] (-0.8660254, 4.5)--(0, 4);
\draw [ultra thick] (0.8660254, 4.5)--(0, 4);
\draw [ultra thick] (0.8660254, 4.5)--(1.732051, 4);
\draw [ultra thick] (2.598076, 4.5)--(1.732051, 4);
\draw [black,fill=red] plot[rotate=45,mark=square*,mark size=5pt] (0,0);
\draw [fill=lightgray] (-0.8660254, -0.5) circle (5pt);
\draw [fill=lightgray] (0.8660254, -0.5) circle (5pt);
\draw [fill=lightgray] (0, 1) circle (5pt);
\draw [fill=lightgray] (-1.732051, 0) circle (5pt);
\draw [fill=lightgray] (-0.8660254, -1.5) circle (5pt);
\draw [fill=lightgray] (1.732051, 0) circle (5pt);
\draw [fill=lightgray] (0.8660254, -1.5) circle (5pt);
\draw [fill=lightgray] (-0.8660254, 1.5) circle (5pt);
\draw [fill=lightgray] (0.8660254, 1.5) circle (5pt);
\draw [fill=lightgray] (-2.598076, -0.5) circle (5pt);
\draw [fill=lightgray] (-1.732051, 1) circle (5pt);
\draw [fill=lightgray] (-1.732051, -2) circle (5pt);
\draw [fill=lightgray] (0, -2) circle (5pt);
\draw [fill=lightgray] (2.598076, -0.5) circle (5pt);
\draw [fill=lightgray] (1.732051, 1) circle (5pt);
\draw [fill=lightgray] (1.732051, -2) circle (5pt);
\draw [fill=lightgray] (-0.8660254, 2.5) circle (5pt);
\draw [fill=lightgray] (0.8660254, 2.5) circle (5pt);
\draw [fill=lightgray] (-3.464102, 0) circle (5pt);
\draw [fill=lightgray] (-2.598076, -1.5) circle (5pt);
\draw [fill=lightgray] (-2.598076, 1.5) circle (5pt);
\draw [fill=lightgray] (-1.732051, -3) circle (5pt);
\draw [fill=lightgray] (0, -3) circle (5pt);
\draw [fill=lightgray] (3.464102, 0) circle (5pt);
\draw [fill=lightgray] (2.598076, -1.5) circle (5pt);
\draw [fill=lightgray] (2.598076, 1.5) circle (5pt);
\draw [fill=lightgray] (1.732051, -3) circle (5pt);
\draw [fill=lightgray] (-1.732051, 3) circle (5pt);
\draw [fill=lightgray] (0, 3) circle (5pt);
\draw [fill=lightgray] (1.732051, 3) circle (5pt);
\draw [fill=lightgray] (-4.330127, -0.5) circle (5pt);
\draw [fill=lightgray] (-3.464102, 1) circle (5pt);
\draw [fill=lightgray] (-3.464102, -2) circle (5pt);
\draw [fill=lightgray] (-2.598076, 2.5) circle (5pt);
\draw [fill=lightgray] (-2.598076, -3.5) circle (5pt);
\draw [fill=lightgray] (-0.8660254, -3.5) circle (5pt);
\draw [fill=lightgray] (0.8660254, -3.5) circle (5pt);
\draw [fill=lightgray] (4.330127, -0.5) circle (5pt);
\draw [fill=lightgray] (3.464102, 1) circle (5pt);
\draw [fill=lightgray] (3.464102, -2) circle (5pt);
\draw [fill=lightgray] (2.598076, 2.5) circle (5pt);
\draw [fill=lightgray] (2.598076, -3.5) circle (5pt);
\draw [fill=lightgray] (-1.732051, 4) circle (5pt);
\draw [fill=lightgray] (0, 4) circle (5pt);
\draw [fill=lightgray] (1.732051, 4) circle (5pt);
\draw [fill=lightgray] (-5.196152, 0) circle (5pt);
\draw [fill=lightgray] (-4.330127, -1.5) circle (5pt);
\draw [fill=lightgray] (-4.330127, 1.5) circle (5pt);
\draw [fill=lightgray] (-3.464102, -3) circle (5pt);
\draw [fill=lightgray] (-3.464102, 3) circle (5pt);
\draw [fill=lightgray] (-2.598076, -4.5) circle (5pt);
\draw [fill=lightgray] (-0.8660254, -4.5) circle (5pt);
\draw [fill=lightgray] (0.8660254, -4.5) circle (5pt);
\draw [fill=lightgray] (5.196152, 0) circle (5pt);
\draw [fill=lightgray] (4.330127, -1.5) circle (5pt);
\draw [fill=lightgray] (4.330127, 1.5) circle (5pt);
\draw [fill=lightgray] (3.464102, -3) circle (5pt);
\draw [fill=lightgray] (3.464102, 3) circle (5pt);
\draw [fill=lightgray] (2.598076, -4.5) circle (5pt);
\draw [fill=lightgray] (-2.598076, 4.5) circle (5pt);
\draw [fill=lightgray] (-0.8660254, 4.5) circle (5pt);
\draw [fill=lightgray] (0.8660254, 4.5) circle (5pt);
\draw [fill=lightgray] (2.598076, 4.5) circle (5pt);
\draw [darkgreen,very thick](0.0, 1.0)--(0.8660254037844386, -0.5);
\draw [darkgreen,very thick](0.8660254037844386, -0.5)--(-0.8660254037844386, -0.5);
\draw [darkgreen,very thick](-0.8660254037844386, -0.5)--(0.0, 1.0);
\draw [violet,very thick](-0.8660254037844386, 1.5)--(0.8660254037844386, 1.5);
\draw [violet,very thick](0.8660254037844386, 1.5)--(1.7320508075688772, 0.0);
\draw [violet,very thick](1.7320508075688772, 0.0)--(0.8660254037844386, -1.5);
\draw [violet,very thick](0.8660254037844386, -1.5)--(-0.8660254037844386, -1.5);
\draw [violet,very thick](-0.8660254037844386, -1.5)--(-1.7320508075688772, 0.0);
\draw [violet,very thick](-1.7320508075688772, 0.0)--(-0.8660254037844386, 1.5);
\draw [darkgreen,very thick](-0.8660254037844386, 2.5)--(0.8660254037844386, 2.5);
\draw [darkgreen,very thick](0.8660254037844386, 2.5)--(2.598076211353316, -0.5);
\draw [darkgreen,very thick](2.598076211353316, -0.5)--(1.7320508075688772, -2.0);
\draw [darkgreen,very thick](1.7320508075688772, -2.0)--(-1.7320508075688772, -2.0);
\draw [darkgreen,very thick](-1.7320508075688772, -2.0)--(-2.598076211353316, -0.5);
\draw [darkgreen,very thick](-2.598076211353316, -0.5)--(-0.8660254037844386, 2.5);
\draw [violet,very thick](-1.7320508075688772, 3.0)--(1.7320508075688772, 3.0);
\draw [violet,very thick](1.7320508075688772, 3.0)--(3.4641016151377544, 0.0);
\draw [violet,very thick](3.4641016151377544, 0.0)--(1.7320508075688772, -3.0);
\draw [violet,very thick](1.7320508075688772, -3.0)--(-1.7320508075688772, -3.0);
\draw [violet,very thick](-1.7320508075688772, -3.0)--(-3.4641016151377544, 0.0);
\draw [violet,very thick](-3.4641016151377544, 0.0)--(-1.7320508075688772, 3.0);
\draw [darkgreen,very thick](-1.7320508075688772, 4.0)--(1.7320508075688772, 4.0);
\draw [darkgreen,very thick](1.7320508075688772, 4.0)--(4.330127018922193, -0.5);
\draw [darkgreen,very thick](4.330127018922193, -0.5)--(2.598076211353316, -3.5);
\draw [darkgreen,very thick](2.598076211353316, -3.5)--(-2.598076211353316, -3.5);
\draw [darkgreen,very thick](-2.598076211353316, -3.5)--(-4.330127018922193, -0.5);
\draw [darkgreen,very thick](-4.330127018922193, -0.5)--(-1.7320508075688772, 4.0);
\draw [violet,very thick](-2.598076211353316, 4.5)--(2.598076211353316, 4.5);
\draw [violet,very thick](2.598076211353316, 4.5)--(5.196152422706632, 0.0);
\draw [violet,very thick](5.196152422706632, 0.0)--(2.598076211353316, -4.5);
\draw [violet,very thick](2.598076211353316, -4.5)--(-2.598076211353316, -4.5);
\draw [violet,very thick](-2.598076211353316, -4.5)--(-5.196152422706632, 0.0);
\draw [violet,very thick](-5.196152422706632, 0.0)--(-2.598076211353316, 4.5);
\draw [red,very thick,-{Latex[length=5mm]}] (0,-6.5)--(0,6.5);
\draw [red,very thick,-{Latex[length=5mm]}] (-7.696152422706632,0)--(7.696152422706632,0);
\node at (0,6.5) [above] {\Large $j$};
\node at (7.696152422706632,0) [right] {\Large $i$};
\end{tikzpicture}
\caption{The $(i,j)$ coordinate system used throughout Section \ref{section hex grid}. $f = (0,0)$ is the red vertex at the center. Its three neighbor vertices, starting from the one directly above, and in clockwise order, are $(0,2),(1,-1)$, and $(-1,1)$.}
\label{Coordinate System}
\end{figure}

In the proof, we often use the distance $\dist(f,v)$ from the initial fire to a given vertex $v$. Let $P_d = \{v\in V\mid\dist(f,v) = d\}$ be the set of vertices $v$ at distance $d$ from $f$. In Figure \ref{Coordinate System}, the sets $P_d$ are marked by green and violet lines for $1 \leq d \leq 6$. Let $(d \bmod 2)$ refer to the remainder of dividing $d$ by $2$. Note that since we embed the hexagonal grid in the plane with $f$ at the origin, we have that the distance $\dist(f,v)=d$ in the hexagonal grid if and only if
\begin{equation}\label{equation distance in hex grid}
\max\left\{\frac{|2j - (d \bmod 2)|}{3},|i|+\frac{|j+(d\bmod 2)|}{3}\right\} = d.
\end{equation}
The proof that~\eqref{equation distance in hex grid} characterizes points at distance $d$ from the origin is straightforward but tedious, so we only give a sketch of the argument here. Let $E_d$ be the points of $V$ satisfying~\eqref{equation distance in hex grid}. It is straightforward to check that the $E_d$ partition $V$, $E_0 = P_0$, and if $u \in E_{d_1}$ is adjacent to $v \in E_{d_2}$, then $|d_1 - d_2| = 1$. Using these facts and induction on $d$, we can show that $E_d = P_d$. One can see that $P_d \subseteq E_d$ by the fact that every vertex of $P_d$ must have a neighbor in $E_{d-1}$, and thus must be in $E_{d-2}$ or $E_d$, but are not in $E_{d-2}$. And to see that $E_d \subseteq P_d$, it suffices to show that every vertex of $E_d$ has a neighbor in $E_{d-1}$. We can do this by checking cases that depend on the parity of $d$, the sign of $i$, and the sign of $j+(d\bmod 2)$. For instance, consider a vertex $(i,j) \in E_d$ with $j<0$ and $d$ even. This implies that $j\bmod 3 = 0$, so $(i,j+2)$ is a neighbor of $(i,j)$. Finally, we see that $(i,j+2) \in E_{d-1}$ since
\begin{align*}
\max&\left\{\frac{|2(j+2) - ((d-1) \bmod 2)|}{3},|i|+\frac{|(j+2)+((d-1)\bmod 2)|}{3}\right\}\\
&= \max\left\{\frac{|2j + 3|}{3},|i|+\frac{|j+3|}{3}\right\} = \max\left\{\frac{|2j|}{3} - 1,|i|+\frac{|j|}{3} - 1 \right\} = d-1 .
\end{align*}
The other cases can be checked similarly.

Our strategy can be broken down into the following steps (see Figure \ref{Strat Outline} for a visual outline):
\begin{enumerate}
\item Before turn $2\tau$, build two protective rays, that if extended indefinitely would protect $\frac{2}{3}$ of the grid.
\item Advance the ray building by one extra step with the extra firefighter at turn $2\tau$. 
\item Bend the protective rays to be parallel to each other. Grow a strip containing the fire with these parallel rays for a sufficiently long time.
\item Bend a ray into a spiral around a vertex $c$. The spiral will collide with the other ray.
\end{enumerate}

\begin{figure}[H]
\centering
    \begin{adjustbox}{max width=0.8\textwidth}
    \begin{tikzpicture}
        \tikzstyle{xdashed}=[dash pattern=on 24pt off 24pt]
	    \draw [black,fill=red] plot[rotate=45,mark=square*,mark size=14pt] (0,0);
	    \node [font=\Huge, scale=3.0] at (-2, 0) {$f$};
	    \draw [line width=2pt] (1,0)--(-1,3.4641);
	    \node [font=\LARGE, scale=3.0] at (1.5, 2.5) {\ref{protecc2/3}};
	    \draw [line width=2pt] (1,0)--(-1,-3.4641);
	    \node [font=\LARGE, scale=3.0] at (1.5, -2.5) {\ref{protecc2/3}};
	    \draw [line width=2pt] (-1,-3.4641)--(-61,-3.4641);
	    \node [font=\Huge, scale=3.0] at (-31, -5) {\ref{protecc}};
	    \draw [line width=2pt] (-1,3.4641)--(-61,3.4641);
	    \node [font=\Huge, scale=3.0] at (-31, 5) {\ref{protecc}};
	    \draw [black,fill=red] plot[rotate=45,mark=square*,mark size=14pt] (-41.7193,41.7193);
	    \node [font=\Huge, scale=3.0] at (-57, 0) {$c$};
	    \draw [color=red,xdashed,line width=2pt] (-59,0)--(-61,-3.4641);
	    \draw [line width=2pt] (-61,-3.4641)--(-67,0);
		\node [font=\Huge, scale=3.0] at (-65.5, -2.598) {$L_1$};
	    \draw [color=red,xdashed,line width=2pt] (-59,0)--(-67,0);
	    \draw [line width=2pt] (-67,0)--(-67,13.8564);
	    \node [font=\Huge, scale=3.0] at (-68.75,6.9282) {$L_2$};
	    \draw [color=red,xdashed,line width=2pt] (-59,0)--(-67,13.8564);
	    \draw [line width=2pt] (-67,13.8564)--(-43,27.7128);
	    \node [font=\Huge, scale=3.0] at (-56.5, 22) {$L_3$};
	    \draw [color=red,xdashed,line width=2pt] (-59,0)--(-43,27.7128);
	    \draw [line width=2pt] (-43,27.7128)--(-1,3.4641);
	    \node [font=\Huge, scale=3.0] at (-21.5, 17) {$L_4$};
    \end{tikzpicture}
    \end{adjustbox}
\caption{Outline of the strategy. All angles are multiples of $30\degree$. The portions of the protective rays built in section \ref{protecc2/3}, that if extended indefinitely would protect $\frac{2}{3}$ of the grid, are labelled \ref{protecc2/3}. Once the rays have been bent to be parallel to each other, they are labelled after section $\ref{protecc}$, where the parallel rays are built from the right to the left and the fire is constrained inside a strip. The bottom ray is eventually bent into a spiral around the vertex $c$, and $L_1,L_2,L_3,L_4$ are the segments of the spiral built in section \ref{spiral}.}
\label{Strat Outline}
\end{figure}
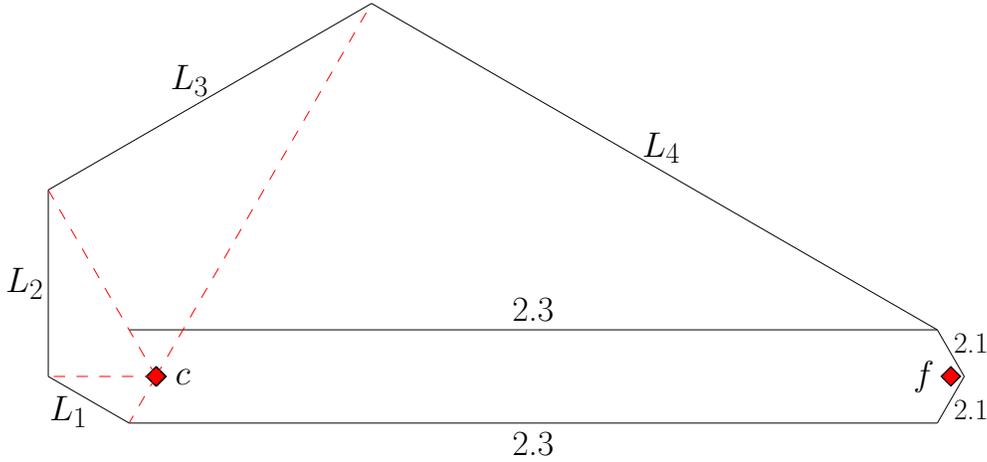 

The improvement over the strategy described in \cite{Grids} is given in the second and third parts. The first and last parts of our strategy are essentially the same as that in \cite{Grids}, but we describe them here for completeness. 

\subsection{Protecting Two-Thirds of the Grid}\label{protecc2/3}
For $0 \leq k \leq \frac{\tau-3}{2}$, on turn $4k+2$, protect $v_{2k+1}:=(1-k, -1-3k)$; on turn $4k+4$, protect $v_{2k+2}:=(1-k, 3+3k)$.

\begin{observation}\label{observation distance protecting two thirds}
Let $v_1,v_2,\dots,v_{\tau-1}$ be the vertices protected in the manner described above. For all $r$ with $1\leq r\leq t-1$, we have $\dist(f,v_r) = r$.
\end{observation}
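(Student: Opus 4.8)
The plan is to verify the distance claim directly from the characterization in~\eqref{equation distance in hex grid}, splitting into the two cases coming from the two families of protected vertices. Since the protected vertices are indexed as $v_{2k+1} = (1-k, -1-3k)$ and $v_{2k+2} = (1-k, 3+3k)$, the index $r$ is odd exactly when $v_r$ is of the first form and even exactly when it is of the second. I would therefore set $d = r$ in~\eqref{equation distance in hex grid} and take the parity of $r$ as the case split, recording that $(r \bmod 2) = 1$ in the odd case and $0$ in the even case.

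In the odd case, write $r = 2k+1$ and substitute $i = 1-k$, $j = -1-3k$, $(d \bmod 2) = 1$. The first term of the maximum becomes $\frac{|2(-1-3k) - 1|}{3} = \frac{|-3-6k|}{3} = 2k+1 = r$, which already attains the target value, so it remains only to check the second term does not exceed $r$. Computing $|1-k| + \frac{|(-1-3k)+1|}{3} = |1-k| + k$, which equals $2k-1$ for $k \geq 1$ and $1$ for $k = 0$, and in both cases this is at most $r$; hence the maximum is exactly $r$.

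The even case is entirely analogous: writing $r = 2k+2$ with $i = 1-k$, $j = 3+3k$, $(d \bmod 2) = 0$, the first term evaluates to $\frac{|2(3+3k)|}{3} = 2k+2 = r$, while the second term $|1-k| + \frac{|3+3k|}{3} = |1-k| + (k+1)$ equals $2k$ for $k \geq 1$ and $2$ for $k = 0$, again at most $r$. Finally, since $r = 2k+1$ with $0 \leq k \leq \frac{\tau-3}{2}$ ranges over the odd integers in $[1, \tau-2]$ and $r = 2k+2$ over the even integers in $[2, \tau-1]$, together these exhaust all $r$ with $1 \leq r \leq \tau-1$, completing the verification.

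The only real subtlety—rather than a genuine obstacle—is the sign of $1-k$ inside the absolute value of the second term, which flips between $k = 0$ and $k \geq 1$. Handling the $k = 0$ boundary separately (where the two terms of the maximum happen to coincide) is all that is required; everything else is a routine substitution into~\eqref{equation distance in hex grid}.
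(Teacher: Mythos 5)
Your proposal is correct and follows exactly the paper's approach: substitute each protected vertex into the distance characterization~\eqref{equation distance in hex grid} with $d=r$, observe that the first term of the maximum evaluates to $r$, and check the second term does not exceed it. The paper simply asserts $\max\{2k+1,|1-k|+k\}=r$ and $\max\{2k+2,|1-k|+1+k\}=r$ without spelling out the sign analysis of $|1-k|$, which you carry out explicitly; this is just a more detailed writeup of the identical computation.
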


\begin{proof}
When $r=2k+1$, we protect $v_r$, which is at $(1-k,-1-3k)$. Note that
\[
\max\left\{\frac{|2(-1-3k) - 1|}{3},|1-k|+\frac{|(-1-3k)+1|}{3}\right\} =\max\left\{2k+1,|1-k|+k\right\}=r.
\]
Furthermore, when $r=2k+2$, we protect $(1-k,3+3k)$, and
\[
\max\left\{\frac{|2(3+3k) - 0|}{3},|1-k|+\frac{|(3+3k)+0|}{3}\right\} =\max\left\{2+2k,|1-k|+1+k\right\}=r,
\]
so by Equation \eqref{equation distance in hex grid}, $\dist(f,v_r) = r$.
\end{proof}

Any burning vertex on turn $2j$ is at distance strictly less than $j$ from $f$. And so, by Observation \ref{observation distance protecting two thirds}, we are permitted to protect the vertices described above. Figure \ref{Protect Ray} illustrates this part of strategy.


\subsection{Accelerating the Ray Building}\label{acc}
At turn $2t$ when we receive the extra firefighter, we continue with the ray building strategy of the previous part, but accelerate it with the extra firefighter: We protect the vertices at $(1-\frac{\tau - 1}{2},-1-\frac{3(\tau - 1)}{2})$ and $(1-\frac{\tau - 1}{2},3+\frac{3(\tau - 1)}{2})$ in turn $2t$, instead of just $(1-\frac{\tau - 1}{2},-1-\frac{3(\tau - 1)}{2})$. Figure \ref{Accel Ray} shows this step.
\subsection{Restricting the Fire to a Strip}\label{protecc}

For $0 \leq k \leq \frac{15\tau+11}{2}$, on turn $2\tau+4k+2$ we protect $v_{\tau+2k+1}:=(-\frac{\tau+1}{2}-2k,-\frac{3(\tau+1)}{2})$, and on turn $2\tau+4k+4$ we protect $v_{\tau+2k+2}:=(-\frac{\tau+1}{2}-2k,2+\frac{3(\tau+1)}{2})$. 

\begin{observation}\label{observation distances when restricting to a strip}
Let $v_{\tau+1},v_{\tau+2},\dots,v_{16\tau+13}$ be the vertices protected in the manner described above. For all $r$ with $\tau+1\leq r\leq 16\tau+13$, we have $\dist(f,v_r) = r$.
\end{observation}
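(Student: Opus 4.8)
The plan is to prove each equality $\dist(f,v_r)=r$ by a direct substitution into the distance characterization \eqref{equation distance in hex grid}, following the same template as the proof of Observation \ref{observation distance protecting two thirds}. The protected vertices come in two families, indexed by $r=\tau+2k+1$ and $r=\tau+2k+2$ for $0\le k\le \frac{15\tau+11}{2}$, so I would handle these as two cases. The first preliminary step is to record that, because the setup forces $\tau$ to be odd, the index $r$ is even in the first family and odd in the second; this fixes $(r\bmod 2)=0$ and $(r\bmod 2)=1$ respectively, which is exactly the data needed to evaluate the shifted absolute values in \eqref{equation distance in hex grid}. The parity of $\tau$ also guarantees that $\tfrac{\tau+1}{2}$ and $\tfrac{3(\tau+1)}{2}$ are integers, so the stated coordinates really are lattice points.

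For the even family I would substitute $(i,j)=\bigl(-\tfrac{\tau+1}{2}-2k,\ -\tfrac{3(\tau+1)}{2}\bigr)$ with $(r\bmod 2)=0$. Using $i<0$ and $j<0$ to clear the absolute values, the two entries of the maximum in \eqref{equation distance in hex grid} become $\tfrac{|2j|}{3}=\tau+1$ and $|i|+\tfrac{|j|}{3}=\bigl(\tfrac{\tau+1}{2}+2k\bigr)+\tfrac{\tau+1}{2}=\tau+2k+1=r$, so the maximum equals $r$. For the odd family I would substitute $(i,j)=\bigl(-\tfrac{\tau+1}{2}-2k,\ 2+\tfrac{3(\tau+1)}{2}\bigr)$ with $(r\bmod 2)=1$, obtaining first entry $\tfrac{|2j-1|}{3}=\tau+2$ and second entry $|i|+\tfrac{|j+1|}{3}=\bigl(\tfrac{\tau+1}{2}+2k\bigr)+\tfrac{\tau+3}{2}=\tau+2k+2=r$. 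In each case \eqref{equation distance in hex grid} then gives $\dist(f,v_r)=r$.

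I expect the entire computation to be routine, so the only real obstacle is bookkeeping rather than mathematical: one must (i) keep track of the parity of $\tau$ to select the correct value of $(r\bmod 2)$, and (ii) verify in both cases that the maximum in \eqref{equation distance in hex grid} is attained by the second entry and not the first. Point (ii) is what makes the distance grow linearly with $k$, and it holds because $2k\ge 0$, so the second entry always dominates. These are precisely the same two subtleties encountered in Observation \ref{observation distance protecting two thirds}, so I would not anticipate any genuinely new difficulty beyond careful sign and parity bookkeeping.
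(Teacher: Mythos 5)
Your proposal is correct and follows essentially the same route as the paper's proof: substitute each of the two families of protected vertices into Equation \eqref{equation distance in hex grid} with the parity of $r$ determined by the oddness of $\tau$, and observe that the maximum is $\max\{\tau+1,\tau+2k+1\}=r$ in the first case and $\max\{\tau+2,\tau+2k+2\}=r$ in the second. Your computed values agree exactly with those in the paper, and your extra remarks about parity and which entry attains the maximum are correct.
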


\begin{proof}
When $r=\tau+2k+1$, the vertex $v_r$ is at $(-\frac{\tau+1}{2}-2k,-\frac{3(\tau+1)}{2})$, so we have that
\begin{align*}
\max&\left\{\frac{\left|2\left(-3(\tau+1)/2\right) - 0\right|}{3},\left|-\frac{\tau+1}{2}-2k\right|+\frac{\left|\left(-3(\tau+1)/2\right)+0\right|}{3}\right\}\\
&=\max\left\{\tau+1,\tau+2k+1\right\}= r,
\end{align*}
and when $r=\tau+2k+2$, $v_r$ is at $(-\frac{\tau+1}{2}-2k,2+\frac{3(\tau+1)}{2})$, and
\begin{align*}
    \max&\left\{\frac{|2\left(2+3(\tau+1)/2\right) - 1|}{3},\left|-\frac{\tau+1}{2}-2k\right|+\frac{|\left(2+3(\tau+1)/2\right)+1|}{3}\right\}\\
    &=\max\left\{\tau+2,\tau+2k+2\right\}=r.
\end{align*}
Thus, by Equation \eqref{equation distance in hex grid}, $\dist(f,v_r)=r$.
\end{proof}

As before, these moves are permitted since on turn $2j$, we protect a vertex $v$ with $\dist(f,v) = j$. Figure \ref{Strip Begin} shows this effective bending of the rays. We claim that so far, we have constrained the fire to a ``strip''.

\begin{lemma}\label{striplemma}
After the fire spreads at turn $32\tau+27$, a vertex $v=(i,j)$ is on fire if and only if all of the following inequalities hold:
\begin{itemize}
\item $\dist(f,v) \leq 16\tau+13$
\item $-4+3i < j < 6-3i$
\item $\frac{-3(\tau+1)}2 < j < 2+\frac{3(\tau+1)}2$
\end{itemize}
\label{t Lemma}
\end{lemma}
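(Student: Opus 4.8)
Write $R$ for the set of vertices $v=(i,j)$ satisfying the three listed conditions and let $B$ be the set of vertices burning after the fire spreads at turn $32\tau+27$; the goal is the equality $B=R$. The plan is to prove the two inclusions separately, after recording two pieces of bookkeeping. First, every vertex protected in Sections~\ref{protecc2/3}, \ref{acc}, and~\ref{protecc} lies on one of the four lines $j=3i-4$, $j=6-3i$, $j=-\frac{3(\tau+1)}{2}$, and $j=2+\frac{3(\tau+1)}{2}$, and each $v_r$ is protected on turn $2r$. Since $\dist(f,v_r)=r$ by Observations~\ref{observation distance protecting two thirds} and~\ref{observation distances when restricting to a strip}, and a vertex at distance $r$ cannot burn before the spread at turn $2r+1$, each $v_r$ is protected strictly before the fire could reach it. In particular, because $R$ is cut out by strict inequalities, no vertex of $R$ is ever protected. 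Second, the standard spreading bound gives that any vertex burning after the spread at turn $2t+1$ has distance at most $t$ from $f$; taking $t=16\tau+13$ yields the first bullet for every $v\in B$.

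For the inclusion $B\subseteq R$ I would show that each of the four lines acts as an uncrossable barrier. The key local observation is that along each wall two consecutive protected vertices $v_r$ and $v_{r+2}$, which are at hexagonal-grid distance $2$, have a common neighbor $w$ on the forbidden side whose only neighbors on the fire side are $v_r$ and $v_{r+2}$ themselves; using the up-pointing/down-pointing neighbor rules of the grid this is a short parity computation for each of the four families. Consequently $w$ can only be reached through $v_r$ or $v_{r+2}$, both of which are already protected by the time the relevant spread occurs. An induction on turns then shows that no burning vertex ever appears on the far side of any wall. The two bends, where a slanted wall meets a horizontal wall near $v_\tau,v_{\tau+1}$ and its mirror image, and the apex near $f$, where the slanted walls are sealed off by $v_1=(1,-1)$ and $v_2=(1,3)$, must be treated as separate cases to confirm that the corner vertices are genuinely enclosed.

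For the reverse inclusion $R\subseteq B$ I would argue by induction on $d=\dist(f,v)$ that every $v\in R$ with $\dist(f,v)=d$ is burning after the spread at turn $2d+1$. Since no vertex of $R$ is protected, it suffices to prove that each $v\in R$ with $d\ge 1$ has a neighbor $u\in R$ with $\dist(f,u)=d-1$: then $u$ is burning by the inductive hypothesis and $v$ ignites at turn $2d+1\le 32\tau+27$. This reduces to a \emph{step toward $f$} property, namely that from any $v\in R$ one can decrease the distance to $f$ by one while staying strictly inside all four walls and the strip, which I would verify by a case analysis on which of the defining inequalities of $R$ are tight at $v$, using the explicit neighbor rules together with the distance characterization~\eqref{equation distance in hex grid}.

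The main obstacle is the barrier verification: confirming that the protected vertices, spaced two apart, really seal each wall, and in particular handling the two bends and the apex, where the vertex types (the residues $(i\bmod 2,\,j\bmod 6)$) and the exact turn of protection must line up. Once the barrier lemma and the step-toward-$f$ property are in hand, the two inductions close immediately and give $B=R$.
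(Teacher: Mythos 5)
Your outline takes essentially the same route as the paper: both get the first bullet from counting spreads ($16\tau+13$ spreads keeps the fire inside $B(f,16\tau+13)$), and both treat the protected vertices on the four lines $j=3i-4$, $j=6-3i$, $j=-\tfrac{3(\tau+1)}{2}$, $j=2+\tfrac{3(\tau+1)}{2}$ as a barrier separating the ball into two regions with $f$ on the inside, so that exactly the region cut out by the strict inequalities burns. The paper simply asserts the separation and that the whole enclosed region is on fire, whereas you spell out the local wall-impermeability check and the step-toward-$f$ induction that justify those assertions; this is added detail rather than a different argument.
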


\begin{proof}
After turn $32\tau+27$, the fire has spread exactly $16\tau+13$ times, so the fire is completely contained inside  $B(f,16\tau+13)$, where $B(f,r):=\{v\in V\mid d(f,v)\leq r\}$ is the closed ball of radius $r$ centered around the vertex $f$, which corresponds to the restriction imposed on the first bullet point above. 

In sections \ref{protecc2/3} and \ref{acc}, we protected every vertex of the form $(1-k,-1-3k)$ and $(1-k,3+3k)$, for all $k$ with $0\leq k\leq \frac{(\tau-1)}2$, which corresponds exactly to vertices of the form $(i,j)$ where $j=-4+3i$ and $j=6-3i$ as $i$ ranges from $1-\frac{\tau-1}2$ to $1$. Note that after turn $2\tau$, the fire has spread $\tau-1$ times, so it was completely contained inside $B(f,\tau-1)$, and the vertices we protected in sections \ref{protecc2/3} and \ref{acc} separate $B(f,\tau-1)$ into two regions, with $f$ in the left region, so at turn $2\tau$, the points on fire all satisfy the second bullet point above.

In Section \ref{protecc}, we protected the vertices $(-\frac{\tau+1}{2}-2k,-\frac{3(\tau+1)}{2})$ and $(-\frac{\tau+1}{2}-2k,2+\frac{3(\tau+1)}{2})$ for all $k$ with $0\leq k\leq \frac{15\tau+11}{2}$, which correspond to vertices $(i,j)$ that satisfy the first and second bulletpoints above, and that have $j=\frac{-3(\tau+1)}2$ or $j=2+\frac{3(\tau+1)}{2}$. The set of vertices protected after turn $32\tau+26$ again separate $B(f,16\tau+13)$ into two regions, and the region containing $f$ is characterized by the second and third bulletpoints above, so these points and no other points are on fire after the fire spreads on turn $32\tau+27$.
\end{proof}

Figure \ref{figure building the strip} provides an example of the early part of our strategy when $\tau=5$.

\begin{figure}[H]
\begin{subfigure}{0.20238095238095238\textwidth}
\centering
\usetikzlibrary{plotmarks}
\begin{tikzpicture}[scale=0.4]
\definecolor{lightgray}{RGB}{195,195,195}
\definecolor{smoothred}{RGB}{255,150,150};
\definecolor{smoothgreen}{RGB}{0,105,105};
\draw [very thin,gray] (-6.062178, -6)--(-6.062178, 7);
\draw [very thin,gray] (-5.196152, -6)--(-5.196152, 7);
\draw [very thin,gray] (-4.330127, -6)--(-4.330127, 7);
\draw [very thin,gray] (-3.464102, -6)--(-3.464102, 7);
\draw [very thin,gray] (-2.598076, -6)--(-2.598076, 7);
\draw [very thin,gray] (-1.732051, -6)--(-1.732051, 7);
\draw [very thin,gray] (-0.8660254, -6)--(-0.8660254, 7);
\draw [very thin,gray] (0, -6)--(0, 7);
\draw [very thin,gray] (0.8660254, -6)--(0.8660254, 7);
\draw [very thin,gray] (-6.928203, -5.5)--(1.732051, -5.5);
\draw [very thin,gray] (-6.928203, -5)--(1.732051, -5);
\draw [very thin,gray] (-6.928203, -4.5)--(1.732051, -4.5);
\draw [very thin,gray] (-6.928203, -4)--(1.732051, -4);
\draw [very thin,gray] (-6.928203, -3.5)--(1.732051, -3.5);
\draw [very thin,gray] (-6.928203, -3)--(1.732051, -3);
\draw [very thin,gray] (-6.928203, -2.5)--(1.732051, -2.5);
\draw [very thin,gray] (-6.928203, -2)--(1.732051, -2);
\draw [very thin,gray] (-6.928203, -1.5)--(1.732051, -1.5);
\draw [very thin,gray] (-6.928203, -1)--(1.732051, -1);
\draw [very thin,gray] (-6.928203, -0.5)--(1.732051, -0.5);
\draw [very thin,gray] (-6.928203, 0)--(1.732051, 0);
\draw [very thin,gray] (-6.928203, 0.5)--(1.732051, 0.5);
\draw [very thin,gray] (-6.928203, 1)--(1.732051, 1);
\draw [very thin,gray] (-6.928203, 1.5)--(1.732051, 1.5);
\draw [very thin,gray] (-6.928203, 2)--(1.732051, 2);
\draw [very thin,gray] (-6.928203, 2.5)--(1.732051, 2.5);
\draw [very thin,gray] (-6.928203, 3)--(1.732051, 3);
\draw [very thin,gray] (-6.928203, 3.5)--(1.732051, 3.5);
\draw [very thin,gray] (-6.928203, 4)--(1.732051, 4);
\draw [very thin,gray] (-6.928203, 4.5)--(1.732051, 4.5);
\draw [very thin,gray] (-6.928203, 5)--(1.732051, 5);
\draw [very thin,gray] (-6.928203, 5.5)--(1.732051, 5.5);
\draw [very thin,gray] (-6.928203, 6)--(1.732051, 6);
\draw [very thin,gray] (-6.928203, 6.5)--(1.732051, 6.5);
\draw [very thick] (-0.8660254, -0.5)--(0, 0);
\draw [very thick] (0.8660254, -0.5)--(0, 0);
\draw [very thick] (0, 1)--(0, 0);
\draw [very thick] (-1.732051, 0)--(-0.8660254, -0.5);
\draw [very thick] (-0.8660254, -1.5)--(-0.8660254, -0.5);
\draw [very thick] (1.732051, 0)--(0.8660254, -0.5);
\draw [very thick] (0.8660254, -1.5)--(0.8660254, -0.5);
\draw [very thick] (-0.8660254, 1.5)--(0, 1);
\draw [very thick] (0.8660254, 1.5)--(0, 1);
\draw [very thick] (-2.598076, -0.5)--(-1.732051, 0);
\draw [very thick] (-1.732051, 1)--(-0.8660254, 1.5);
\draw [very thick] (-1.732051, 1)--(-1.732051, 0);
\draw [very thick] (-1.732051, -2)--(-0.8660254, -1.5);
\draw [very thick] (0, -2)--(-0.8660254, -1.5);
\draw [very thick] (0, -2)--(0.8660254, -1.5);
\draw [very thick] (1.732051, 1)--(0.8660254, 1.5);
\draw [very thick] (1.732051, -2)--(0.8660254, -1.5);
\draw [very thick] (-0.8660254, 2.5)--(-0.8660254, 1.5);
\draw [very thick] (0.8660254, 2.5)--(0.8660254, 1.5);
\draw [very thick] (-3.464102, 0)--(-2.598076, -0.5);
\draw [very thick] (-2.598076, -1.5)--(-1.732051, -2);
\draw [very thick] (-2.598076, -1.5)--(-2.598076, -0.5);
\draw [very thick] (-2.598076, 1.5)--(-1.732051, 1);
\draw [very thick] (-1.732051, -3)--(-1.732051, -2);
\draw [very thick] (0, -3)--(0, -2);
\draw [very thick] (-1.732051, 3)--(-0.8660254, 2.5);
\draw [very thick] (0, 3)--(-0.8660254, 2.5);
\draw [very thick] (0, 3)--(0.8660254, 2.5);
\draw [very thick] (1.732051, 3)--(0.8660254, 2.5);
\draw [very thick] (-4.330127, -0.5)--(-3.464102, 0);
\draw [very thick] (-3.464102, 1)--(-2.598076, 1.5);
\draw [very thick] (-3.464102, 1)--(-3.464102, 0);
\draw [very thick] (-3.464102, -2)--(-2.598076, -1.5);
\draw [very thick] (-2.598076, 2.5)--(-1.732051, 3);
\draw [very thick] (-2.598076, 2.5)--(-2.598076, 1.5);
\draw [very thick] (-2.598076, -3.5)--(-1.732051, -3);
\draw [very thick] (-0.8660254, -3.5)--(-1.732051, -3);
\draw [very thick] (-0.8660254, -3.5)--(0, -3);
\draw [very thick] (0.8660254, -3.5)--(0, -3);
\draw [very thick] (0.8660254, -3.5)--(1.732051, -3);
\draw [very thick] (-1.732051, 4)--(-1.732051, 3);
\draw [very thick] (0, 4)--(0, 3);
\draw [very thick] (-5.196152, 0)--(-4.330127, -0.5);
\draw [very thick] (-4.330127, -1.5)--(-3.464102, -2);
\draw [very thick] (-4.330127, -1.5)--(-4.330127, -0.5);
\draw [very thick] (-4.330127, 1.5)--(-3.464102, 1);
\draw [very thick] (-3.464102, -3)--(-2.598076, -3.5);
\draw [very thick] (-3.464102, -3)--(-3.464102, -2);
\draw [very thick] (-3.464102, 3)--(-2.598076, 2.5);
\draw [very thick] (-2.598076, -4.5)--(-2.598076, -3.5);
\draw [very thick] (-0.8660254, -4.5)--(-0.8660254, -3.5);
\draw [very thick] (0.8660254, -4.5)--(0.8660254, -3.5);
\draw [very thick] (-2.598076, 4.5)--(-1.732051, 4);
\draw [very thick] (-0.8660254, 4.5)--(-1.732051, 4);
\draw [very thick] (-0.8660254, 4.5)--(0, 4);
\draw [very thick] (0.8660254, 4.5)--(0, 4);
\draw [very thick] (0.8660254, 4.5)--(1.732051, 4);
\draw [very thick] (-6.062178, -0.5)--(-5.196152, 0);
\draw [very thick] (-5.196152, 1)--(-4.330127, 1.5);
\draw [very thick] (-5.196152, 1)--(-5.196152, 0);
\draw [very thick] (-5.196152, -2)--(-4.330127, -1.5);
\draw [very thick] (-4.330127, 2.5)--(-3.464102, 3);
\draw [very thick] (-4.330127, 2.5)--(-4.330127, 1.5);
\draw [very thick] (-4.330127, -3.5)--(-3.464102, -3);
\draw [very thick] (-3.464102, 4)--(-2.598076, 4.5);
\draw [very thick] (-3.464102, 4)--(-3.464102, 3);
\draw [very thick] (-3.464102, -5)--(-2.598076, -4.5);
\draw [very thick] (-1.732051, -5)--(-2.598076, -4.5);
\draw [very thick] (-1.732051, -5)--(-0.8660254, -4.5);
\draw [very thick] (0, -5)--(-0.8660254, -4.5);
\draw [very thick] (0, -5)--(0.8660254, -4.5);
\draw [very thick] (1.732051, -5)--(0.8660254, -4.5);
\draw [very thick] (-2.598076, 5.5)--(-2.598076, 4.5);
\draw [very thick] (-0.8660254, 5.5)--(-0.8660254, 4.5);
\draw [very thick] (0.8660254, 5.5)--(0.8660254, 4.5);
\draw [very thick] (-6.928203, 0)--(-6.062178, -0.5);
\draw [very thick] (-6.062178, -1.5)--(-5.196152, -2);
\draw [very thick] (-6.062178, -1.5)--(-6.062178, -0.5);
\draw [very thick] (-6.062178, 1.5)--(-5.196152, 1);
\draw [very thick] (-5.196152, -3)--(-4.330127, -3.5);
\draw [very thick] (-5.196152, -3)--(-5.196152, -2);
\draw [very thick] (-5.196152, 3)--(-4.330127, 2.5);
\draw [very thick] (-4.330127, -4.5)--(-3.464102, -5);
\draw [very thick] (-4.330127, -4.5)--(-4.330127, -3.5);
\draw [very thick] (-4.330127, 4.5)--(-3.464102, 4);
\draw [very thick] (-3.464102, -6)--(-3.464102, -5);
\draw [very thick] (-1.732051, -6)--(-1.732051, -5);
\draw [very thick] (0, -6)--(0, -5);
\draw [very thick] (-3.464102, 6)--(-2.598076, 5.5);
\draw [very thick] (-1.732051, 6)--(-2.598076, 5.5);
\draw [very thick] (-1.732051, 6)--(-0.8660254, 5.5);
\draw [very thick] (0, 6)--(-0.8660254, 5.5);
\draw [very thick] (0, 6)--(0.8660254, 5.5);
\draw [very thick] (1.732051, 6)--(0.8660254, 5.5);
\draw [very thick] (-6.928203, 1)--(-6.062178, 1.5);
\draw [very thick] (-6.928203, -2)--(-6.062178, -1.5);
\draw [very thick] (-6.062178, 2.5)--(-5.196152, 3);
\draw [very thick] (-6.062178, 2.5)--(-6.062178, 1.5);
\draw [very thick] (-6.062178, -3.5)--(-5.196152, -3);
\draw [very thick] (-5.196152, 4)--(-4.330127, 4.5);
\draw [very thick] (-5.196152, 4)--(-5.196152, 3);
\draw [very thick] (-5.196152, -5)--(-4.330127, -4.5);
\draw [very thick] (-4.330127, 5.5)--(-3.464102, 6);
\draw [very thick] (-4.330127, 5.5)--(-4.330127, 4.5);
\draw [very thick] (-3.464102, 7)--(-3.464102, 6);
\draw [very thick] (-1.732051, 7)--(-1.732051, 6);
\draw [very thick] (0, 7)--(0, 6);
\draw [very thick] (-6.928203, -3)--(-6.062178, -3.5);
\draw [very thick] (-6.928203, 3)--(-6.062178, 2.5);
\draw [very thick] (-6.062178, -4.5)--(-5.196152, -5);
\draw [very thick] (-6.062178, -4.5)--(-6.062178, -3.5);
\draw [very thick] (-6.062178, 4.5)--(-5.196152, 4);
\draw [very thick] (-5.196152, -6)--(-5.196152, -5);
\draw [very thick] (-5.196152, 6)--(-4.330127, 5.5);
\draw [very thick] (-6.928203, 4)--(-6.062178, 4.5);
\draw [very thick] (-6.928203, -5)--(-6.062178, -4.5);
\draw [very thick] (-6.062178, 5.5)--(-5.196152, 6);
\draw [very thick] (-6.062178, 5.5)--(-6.062178, 4.5);
\draw [very thick] (-5.196152, 7)--(-5.196152, 6);
\draw [very thick] (-6.928203, 6)--(-6.062178, 5.5);
\draw [black,fill=red,rotate around={45:(0, 0)}] plot[mark=square*,mark size=5.4pt] (0, 0);
\draw [black,fill=smoothred,rotate around={45:(-0.8660254, -0.5)}] plot[mark=square*,mark size=5.4pt] (-0.8660254, -0.5);
\draw [black,fill=smoothgreen] plot[mark=square*,mark size=5.4pt] (0.8660254, -0.5);
\draw [black,fill=smoothred,rotate around={45:(0, 1)}] plot[mark=square*,mark size=5.4pt] (0, 1);
\draw [black,fill=smoothred,rotate around={45:(-1.732051, 0)}] plot[mark=square*,mark size=5.4pt] (-1.732051, 0);
\draw [black,fill=smoothred,rotate around={45:(-0.8660254, -1.5)}] plot[mark=square*,mark size=5.4pt] (-0.8660254, -1.5);
\draw [fill=lightgray] (0.8660254, -1.5) circle (5.4pt);
\draw [black,fill=smoothred,rotate around={45:(-0.8660254, 1.5)}] plot[mark=square*,mark size=5.4pt] (-0.8660254, 1.5);
\draw [black,fill=smoothgreen] plot[mark=square*,mark size=5.4pt] (0.8660254, 1.5);
\draw [black,fill=smoothred,rotate around={45:(-2.598076, -0.5)}] plot[mark=square*,mark size=5.4pt] (-2.598076, -0.5);
\draw [black,fill=smoothred,rotate around={45:(-1.732051, 1)}] plot[mark=square*,mark size=5.4pt] (-1.732051, 1);
\draw [black,fill=smoothred,rotate around={45:(-1.732051, -2)}] plot[mark=square*,mark size=5.4pt] (-1.732051, -2);
\draw [black,fill=smoothgreen] plot[mark=square*,mark size=5.4pt] (0, -2);
\draw [black,fill=smoothred,rotate around={45:(-0.8660254, 2.5)}] plot[mark=square*,mark size=5.4pt] (-0.8660254, 2.5);
\draw [fill=lightgray] (0.8660254, 2.5) circle (5.4pt);
\draw [black,fill=smoothred,rotate around={45:(-3.464102, 0)}] plot[mark=square*,mark size=5.4pt] (-3.464102, 0);
\draw [black,fill=smoothred,rotate around={45:(-2.598076, -1.5)}] plot[mark=square*,mark size=5.4pt] (-2.598076, -1.5);
\draw [black,fill=smoothred,rotate around={45:(-2.598076, 1.5)}] plot[mark=square*,mark size=5.4pt] (-2.598076, 1.5);
\draw [black,fill=smoothred,rotate around={45:(-1.732051, -3)}] plot[mark=square*,mark size=5.4pt] (-1.732051, -3);
\draw [fill=lightgray] (0, -3) circle (5.4pt);
\draw [black,fill=smoothred,rotate around={45:(-1.732051, 3)}] plot[mark=square*,mark size=5.4pt] (-1.732051, 3);
\draw [black,fill=smoothgreen] plot[mark=square*,mark size=5.4pt] (0, 3);
\draw [fill=lightgray] (-4.330127, -0.5) circle (5.4pt);
\draw [fill=lightgray] (-3.464102, 1) circle (5.4pt);
\draw [fill=lightgray] (-3.464102, -2) circle (5.4pt);
\draw [fill=lightgray] (-2.598076, 2.5) circle (5.4pt);
\draw [fill=lightgray] (-2.598076, -3.5) circle (5.4pt);
\draw [fill=lightgray] (-0.8660254, -3.5) circle (5.4pt);
\draw [fill=lightgray] (0.8660254, -3.5) circle (5.4pt);
\draw [fill=lightgray] (-1.732051, 4) circle (5.4pt);
\draw [fill=lightgray] (0, 4) circle (5.4pt);
\draw [fill=lightgray] (-5.196152, 0) circle (5.4pt);
\draw [fill=lightgray] (-4.330127, -1.5) circle (5.4pt);
\draw [fill=lightgray] (-4.330127, 1.5) circle (5.4pt);
\draw [fill=lightgray] (-3.464102, -3) circle (5.4pt);
\draw [fill=lightgray] (-3.464102, 3) circle (5.4pt);
\draw [fill=lightgray] (-2.598076, -4.5) circle (5.4pt);
\draw [fill=lightgray] (-0.8660254, -4.5) circle (5.4pt);
\draw [fill=lightgray] (0.8660254, -4.5) circle (5.4pt);
\draw [fill=lightgray] (-2.598076, 4.5) circle (5.4pt);
\draw [fill=lightgray] (-0.8660254, 4.5) circle (5.4pt);
\draw [fill=lightgray] (0.8660254, 4.5) circle (5.4pt);
\draw [fill=lightgray] (-6.062178, -0.5) circle (5.4pt);
\draw [fill=lightgray] (-5.196152, 1) circle (5.4pt);
\draw [fill=lightgray] (-5.196152, -2) circle (5.4pt);
\draw [fill=lightgray] (-4.330127, 2.5) circle (5.4pt);
\draw [fill=lightgray] (-4.330127, -3.5) circle (5.4pt);
\draw [fill=lightgray] (-3.464102, 4) circle (5.4pt);
\draw [fill=lightgray] (-3.464102, -5) circle (5.4pt);
\draw [fill=lightgray] (-1.732051, -5) circle (5.4pt);
\draw [fill=lightgray] (0, -5) circle (5.4pt);
\draw [fill=lightgray] (-2.598076, 5.5) circle (5.4pt);
\draw [fill=lightgray] (-0.8660254, 5.5) circle (5.4pt);
\draw [fill=lightgray] (0.8660254, 5.5) circle (5.4pt);
\draw [fill=lightgray] (-6.062178, -1.5) circle (5.4pt);
\draw [fill=lightgray] (-6.062178, 1.5) circle (5.4pt);
\draw [fill=lightgray] (-5.196152, -3) circle (5.4pt);
\draw [fill=lightgray] (-5.196152, 3) circle (5.4pt);
\draw [fill=lightgray] (-4.330127, -4.5) circle (5.4pt);
\draw [fill=lightgray] (-4.330127, 4.5) circle (5.4pt);
\draw [fill=lightgray] (-3.464102, 6) circle (5.4pt);
\draw [fill=lightgray] (-1.732051, 6) circle (5.4pt);
\draw [fill=lightgray] (0, 6) circle (5.4pt);
\draw [fill=lightgray] (-6.062178, 2.5) circle (5.4pt);
\draw [fill=lightgray] (-6.062178, -3.5) circle (5.4pt);
\draw [fill=lightgray] (-5.196152, 4) circle (5.4pt);
\draw [fill=lightgray] (-5.196152, -5) circle (5.4pt);
\draw [fill=lightgray] (-4.330127, 5.5) circle (5.4pt);
\draw [fill=lightgray] (-6.062178, -4.5) circle (5.4pt);
\draw [fill=lightgray] (-6.062178, 4.5) circle (5.4pt);
\draw [fill=lightgray] (-5.196152, 6) circle (5.4pt);
\draw [fill=lightgray] (-6.062178, 5.5) circle (5.4pt);
\draw [red,fill opacity=0,line width=1.8pt] (0, 0) circle (14.4pt);
\end{tikzpicture}
\caption{Turn $2\tau - 1$.}
\label{Protect Ray}
\end{subfigure}
\hspace{0.05\textwidth}
\hspace*{-1em}
\begin{subfigure}{0.20238095238095238\textwidth}
\centering
\begin{tikzpicture}[scale=0.4]
\definecolor{lightgray}{RGB}{195,195,195}
\definecolor{smoothred}{RGB}{255,150,150};
\definecolor{smoothgreen}{RGB}{0,105,105};
\draw [gray,very thin] (-6.062178, -6)--(-6.062178, 7);
\draw [gray,very thin] (-5.196152, -6)--(-5.196152, 7);
\draw [gray,very thin] (-4.330127, -6)--(-4.330127, 7);
\draw [gray,very thin] (-3.464102, -6)--(-3.464102, 7);
\draw [gray,very thin] (-2.598076, -6)--(-2.598076, 7);
\draw [gray,very thin] (-1.732051, -6)--(-1.732051, 7);
\draw [gray,very thin] (-0.8660254, -6)--(-0.8660254, 7);
\draw [gray,very thin] (0, -6)--(0, 7);
\draw [gray,very thin] (0.8660254, -6)--(0.8660254, 7);
\draw [gray,very thin] (-6.928203, -5.5)--(1.732051, -5.5);
\draw [gray,very thin] (-6.928203, -5)--(1.732051, -5);
\draw [gray,very thin] (-6.928203, -4.5)--(1.732051, -4.5);
\draw [gray,very thin] (-6.928203, -4)--(1.732051, -4);
\draw [gray,very thin] (-6.928203, -3.5)--(1.732051, -3.5);
\draw [gray,very thin] (-6.928203, -3)--(1.732051, -3);
\draw [gray,very thin] (-6.928203, -2.5)--(1.732051, -2.5);
\draw [gray,very thin] (-6.928203, -2)--(1.732051, -2);
\draw [gray,very thin] (-6.928203, -1.5)--(1.732051, -1.5);
\draw [gray,very thin] (-6.928203, -1)--(1.732051, -1);
\draw [gray,very thin] (-6.928203, -0.5)--(1.732051, -0.5);
\draw [gray,very thin] (-6.928203, 0)--(1.732051, 0);
\draw [gray,very thin] (-6.928203, 0.5)--(1.732051, 0.5);
\draw [gray,very thin] (-6.928203, 1)--(1.732051, 1);
\draw [gray,very thin] (-6.928203, 1.5)--(1.732051, 1.5);
\draw [gray,very thin] (-6.928203, 2)--(1.732051, 2);
\draw [gray,very thin] (-6.928203, 2.5)--(1.732051, 2.5);
\draw [gray,very thin] (-6.928203, 3)--(1.732051, 3);
\draw [gray,very thin] (-6.928203, 3.5)--(1.732051, 3.5);
\draw [gray,very thin] (-6.928203, 4)--(1.732051, 4);
\draw [gray,very thin] (-6.928203, 4.5)--(1.732051, 4.5);
\draw [gray,very thin] (-6.928203, 5)--(1.732051, 5);
\draw [gray,very thin] (-6.928203, 5.5)--(1.732051, 5.5);
\draw [gray,very thin] (-6.928203, 6)--(1.732051, 6);
\draw [gray,very thin] (-6.928203, 6.5)--(1.732051, 6.5);
\draw [very thick] (-0.8660254, -0.5)--(0, 0);
\draw [very thick] (0.8660254, -0.5)--(0, 0);
\draw [very thick] (0, 1)--(0, 0);
\draw [very thick] (-1.732051, 0)--(-0.8660254, -0.5);
\draw [very thick] (-0.8660254, -1.5)--(-0.8660254, -0.5);
\draw [very thick] (1.732051, 0)--(0.8660254, -0.5);
\draw [very thick] (0.8660254, -1.5)--(0.8660254, -0.5);
\draw [very thick] (-0.8660254, 1.5)--(0, 1);
\draw [very thick] (0.8660254, 1.5)--(0, 1);
\draw [very thick] (-2.598076, -0.5)--(-1.732051, 0);
\draw [very thick] (-1.732051, 1)--(-0.8660254, 1.5);
\draw [very thick] (-1.732051, 1)--(-1.732051, 0);
\draw [very thick] (-1.732051, -2)--(-0.8660254, -1.5);
\draw [very thick] (0, -2)--(-0.8660254, -1.5);
\draw [very thick] (0, -2)--(0.8660254, -1.5);
\draw [very thick] (1.732051, 1)--(0.8660254, 1.5);
\draw [very thick] (1.732051, -2)--(0.8660254, -1.5);
\draw [very thick] (-0.8660254, 2.5)--(-0.8660254, 1.5);
\draw [very thick] (0.8660254, 2.5)--(0.8660254, 1.5);
\draw [very thick] (-3.464102, 0)--(-2.598076, -0.5);
\draw [very thick] (-2.598076, -1.5)--(-1.732051, -2);
\draw [very thick] (-2.598076, -1.5)--(-2.598076, -0.5);
\draw [very thick] (-2.598076, 1.5)--(-1.732051, 1);
\draw [very thick] (-1.732051, -3)--(-1.732051, -2);
\draw [very thick] (0, -3)--(0, -2);
\draw [very thick] (-1.732051, 3)--(-0.8660254, 2.5);
\draw [very thick] (0, 3)--(-0.8660254, 2.5);
\draw [very thick] (0, 3)--(0.8660254, 2.5);
\draw [very thick] (1.732051, 3)--(0.8660254, 2.5);
\draw [very thick] (-4.330127, -0.5)--(-3.464102, 0);
\draw [very thick] (-3.464102, 1)--(-2.598076, 1.5);
\draw [very thick] (-3.464102, 1)--(-3.464102, 0);
\draw [very thick] (-3.464102, -2)--(-2.598076, -1.5);
\draw [very thick] (-2.598076, 2.5)--(-1.732051, 3);
\draw [very thick] (-2.598076, 2.5)--(-2.598076, 1.5);
\draw [very thick] (-2.598076, -3.5)--(-1.732051, -3);
\draw [very thick] (-0.8660254, -3.5)--(-1.732051, -3);
\draw [very thick] (-0.8660254, -3.5)--(0, -3);
\draw [very thick] (0.8660254, -3.5)--(0, -3);
\draw [very thick] (0.8660254, -3.5)--(1.732051, -3);
\draw [very thick] (-1.732051, 4)--(-1.732051, 3);
\draw [very thick] (0, 4)--(0, 3);
\draw [very thick] (-5.196152, 0)--(-4.330127, -0.5);
\draw [very thick] (-4.330127, -1.5)--(-3.464102, -2);
\draw [very thick] (-4.330127, -1.5)--(-4.330127, -0.5);
\draw [very thick] (-4.330127, 1.5)--(-3.464102, 1);
\draw [very thick] (-3.464102, -3)--(-2.598076, -3.5);
\draw [very thick] (-3.464102, -3)--(-3.464102, -2);
\draw [very thick] (-3.464102, 3)--(-2.598076, 2.5);
\draw [very thick] (-2.598076, -4.5)--(-2.598076, -3.5);
\draw [very thick] (-0.8660254, -4.5)--(-0.8660254, -3.5);
\draw [very thick] (0.8660254, -4.5)--(0.8660254, -3.5);
\draw [very thick] (-2.598076, 4.5)--(-1.732051, 4);
\draw [very thick] (-0.8660254, 4.5)--(-1.732051, 4);
\draw [very thick] (-0.8660254, 4.5)--(0, 4);
\draw [very thick] (0.8660254, 4.5)--(0, 4);
\draw [very thick] (0.8660254, 4.5)--(1.732051, 4);
\draw [very thick] (-6.062178, -0.5)--(-5.196152, 0);
\draw [very thick] (-5.196152, 1)--(-4.330127, 1.5);
\draw [very thick] (-5.196152, 1)--(-5.196152, 0);
\draw [very thick] (-5.196152, -2)--(-4.330127, -1.5);
\draw [very thick] (-4.330127, 2.5)--(-3.464102, 3);
\draw [very thick] (-4.330127, 2.5)--(-4.330127, 1.5);
\draw [very thick] (-4.330127, -3.5)--(-3.464102, -3);
\draw [very thick] (-3.464102, 4)--(-2.598076, 4.5);
\draw [very thick] (-3.464102, 4)--(-3.464102, 3);
\draw [very thick] (-3.464102, -5)--(-2.598076, -4.5);
\draw [very thick] (-1.732051, -5)--(-2.598076, -4.5);
\draw [very thick] (-1.732051, -5)--(-0.8660254, -4.5);
\draw [very thick] (0, -5)--(-0.8660254, -4.5);
\draw [very thick] (0, -5)--(0.8660254, -4.5);
\draw [very thick] (1.732051, -5)--(0.8660254, -4.5);
\draw [very thick] (-2.598076, 5.5)--(-2.598076, 4.5);
\draw [very thick] (-0.8660254, 5.5)--(-0.8660254, 4.5);
\draw [very thick] (0.8660254, 5.5)--(0.8660254, 4.5);
\draw [very thick] (-6.928203, 0)--(-6.062178, -0.5);
\draw [very thick] (-6.062178, -1.5)--(-5.196152, -2);
\draw [very thick] (-6.062178, -1.5)--(-6.062178, -0.5);
\draw [very thick] (-6.062178, 1.5)--(-5.196152, 1);
\draw [very thick] (-5.196152, -3)--(-4.330127, -3.5);
\draw [very thick] (-5.196152, -3)--(-5.196152, -2);
\draw [very thick] (-5.196152, 3)--(-4.330127, 2.5);
\draw [very thick] (-4.330127, -4.5)--(-3.464102, -5);
\draw [very thick] (-4.330127, -4.5)--(-4.330127, -3.5);
\draw [very thick] (-4.330127, 4.5)--(-3.464102, 4);
\draw [very thick] (-3.464102, -6)--(-3.464102, -5);
\draw [very thick] (-1.732051, -6)--(-1.732051, -5);
\draw [very thick] (0, -6)--(0, -5);
\draw [very thick] (-3.464102, 6)--(-2.598076, 5.5);
\draw [very thick] (-1.732051, 6)--(-2.598076, 5.5);
\draw [very thick] (-1.732051, 6)--(-0.8660254, 5.5);
\draw [very thick] (0, 6)--(-0.8660254, 5.5);
\draw [very thick] (0, 6)--(0.8660254, 5.5);
\draw [very thick] (1.732051, 6)--(0.8660254, 5.5);
\draw [very thick] (-6.928203, 1)--(-6.062178, 1.5);
\draw [very thick] (-6.928203, -2)--(-6.062178, -1.5);
\draw [very thick] (-6.062178, 2.5)--(-5.196152, 3);
\draw [very thick] (-6.062178, 2.5)--(-6.062178, 1.5);
\draw [very thick] (-6.062178, -3.5)--(-5.196152, -3);
\draw [very thick] (-5.196152, 4)--(-4.330127, 4.5);
\draw [very thick] (-5.196152, 4)--(-5.196152, 3);
\draw [very thick] (-5.196152, -5)--(-4.330127, -4.5);
\draw [very thick] (-4.330127, 5.5)--(-3.464102, 6);
\draw [very thick] (-4.330127, 5.5)--(-4.330127, 4.5);
\draw [very thick] (-3.464102, 7)--(-3.464102, 6);
\draw [very thick] (-1.732051, 7)--(-1.732051, 6);
\draw [very thick] (0, 7)--(0, 6);
\draw [very thick] (-6.928203, -3)--(-6.062178, -3.5);
\draw [very thick] (-6.928203, 3)--(-6.062178, 2.5);
\draw [very thick] (-6.062178, -4.5)--(-5.196152, -5);
\draw [very thick] (-6.062178, -4.5)--(-6.062178, -3.5);
\draw [very thick] (-6.062178, 4.5)--(-5.196152, 4);
\draw [very thick] (-5.196152, -6)--(-5.196152, -5);
\draw [very thick] (-5.196152, 6)--(-4.330127, 5.5);
\draw [very thick] (-6.928203, 4)--(-6.062178, 4.5);
\draw [very thick] (-6.928203, -5)--(-6.062178, -4.5);
\draw [very thick] (-6.062178, 5.5)--(-5.196152, 6);
\draw [very thick] (-6.062178, 5.5)--(-6.062178, 4.5);
\draw [very thick] (-5.196152, 7)--(-5.196152, 6);
\draw [very thick] (-6.928203, 6)--(-6.062178, 5.5);
\draw [black,fill=red,rotate around={45:(0, 0)}] plot[mark=square*,mark size=5.4pt] (0, 0);
\draw [black,fill=smoothred,rotate around={45:(-0.8660254, -0.5)}] plot[mark=square*,mark size=5.4pt] (-0.8660254, -0.5);
\draw [black,fill=smoothgreen] plot[mark=square*,mark size=5.4pt] (0.8660254, -0.5);
\draw [black,fill=smoothred,rotate around={45:(0, 1)}] plot[mark=square*,mark size=5.4pt] (0, 1);
\draw [black,fill=smoothred,rotate around={45:(-1.732051, 0)}] plot[mark=square*,mark size=5.4pt] (-1.732051, 0);
\draw [black,fill=smoothred,rotate around={45:(-0.8660254, -1.5)}] plot[mark=square*,mark size=5.4pt] (-0.8660254, -1.5);
\draw [fill=lightgray] (0.8660254, -1.5) circle (5.4pt);
\draw [black,fill=smoothred,rotate around={45:(-0.8660254, 1.5)}] plot[mark=square*,mark size=5.4pt] (-0.8660254, 1.5);
\draw [black,fill=smoothgreen] plot[mark=square*,mark size=5.4pt] (0.8660254, 1.5);
\draw [black,fill=smoothred,rotate around={45:(-2.598076, -0.5)}] plot[mark=square*,mark size=5.4pt] (-2.598076, -0.5);
\draw [black,fill=smoothred,rotate around={45:(-1.732051, 1)}] plot[mark=square*,mark size=5.4pt] (-1.732051, 1);
\draw [black,fill=smoothred,rotate around={45:(-1.732051, -2)}] plot[mark=square*,mark size=5.4pt] (-1.732051, -2);
\draw [black,fill=smoothgreen] plot[mark=square*,mark size=5.4pt] (0, -2);
\draw [black,fill=smoothred,rotate around={45:(-0.8660254, 2.5)}] plot[mark=square*,mark size=5.4pt] (-0.8660254, 2.5);
\draw [fill=lightgray] (0.8660254, 2.5) circle (5.4pt);
\draw [black,fill=smoothred,rotate around={45:(-3.464102, 0)}] plot[mark=square*,mark size=5.4pt] (-3.464102, 0);
\draw [black,fill=smoothred,rotate around={45:(-2.598076, -1.5)}] plot[mark=square*,mark size=5.4pt] (-2.598076, -1.5);
\draw [black,fill=smoothred,rotate around={45:(-2.598076, 1.5)}] plot[mark=square*,mark size=5.4pt] (-2.598076, 1.5);
\draw [black,fill=smoothred,rotate around={45:(-1.732051, -3)}] plot[mark=square*,mark size=5.4pt] (-1.732051, -3);
\draw [fill=lightgray] (0, -3) circle (5.4pt);
\draw [black,fill=smoothred,rotate around={45:(-1.732051, 3)}] plot[mark=square*,mark size=5.4pt] (-1.732051, 3);
\draw [black,fill=smoothgreen] plot[mark=square*,mark size=5.4pt] (0, 3);
\draw [black,fill=smoothred,rotate around={45:(-4.330127, -0.5)}] plot[mark=square*,mark size=5.4pt] (-4.330127, -0.5);
\draw [black,fill=smoothred,rotate around={45:(-3.464102, 1)}] plot[mark=square*,mark size=5.4pt] (-3.464102, 1);
\draw [black,fill=smoothred,rotate around={45:(-3.464102, -2)}] plot[mark=square*,mark size=5.4pt] (-3.464102, -2);
\draw [black,fill=smoothred,rotate around={45:(-2.598076, 2.5)}] plot[mark=square*,mark size=5.4pt] (-2.598076, 2.5);
\draw [black,fill=smoothred,rotate around={45:(-2.598076, -3.5)}] plot[mark=square*,mark size=5.4pt] (-2.598076, -3.5);
\draw [black,fill=smoothgreen] plot[mark=square*,mark size=5.4pt] (-0.8660254, -3.5);
\draw [fill=lightgray] (0.8660254, -3.5) circle (5.4pt);
\draw [black,fill=smoothred,rotate around={45:(-1.732051, 4)}] plot[mark=square*,mark size=5.4pt] (-1.732051, 4);
\draw [fill=lightgray] (0, 4) circle (5.4pt);
\draw [fill=lightgray] (-5.196152, 0) circle (5.4pt);
\draw [fill=lightgray] (-4.330127, -1.5) circle (5.4pt);
\draw [fill=lightgray] (-4.330127, 1.5) circle (5.4pt);
\draw [fill=lightgray] (-3.464102, -3) circle (5.4pt);
\draw [fill=lightgray] (-3.464102, 3) circle (5.4pt);
\draw [fill=lightgray] (-2.598076, -4.5) circle (5.4pt);
\draw [fill=lightgray] (-0.8660254, -4.5) circle (5.4pt);
\draw [fill=lightgray] (0.8660254, -4.5) circle (5.4pt);
\draw [fill=lightgray] (-2.598076, 4.5) circle (5.4pt);
\draw [black,fill=smoothgreen] plot[mark=square*,mark size=5.4pt] (-0.8660254, 4.5);
\draw [fill=lightgray] (0.8660254, 4.5) circle (5.4pt);
\draw [fill=lightgray] (-6.062178, -0.5) circle (5.4pt);
\draw [fill=lightgray] (-5.196152, 1) circle (5.4pt);
\draw [fill=lightgray] (-5.196152, -2) circle (5.4pt);
\draw [fill=lightgray] (-4.330127, 2.5) circle (5.4pt);
\draw [fill=lightgray] (-4.330127, -3.5) circle (5.4pt);
\draw [fill=lightgray] (-3.464102, 4) circle (5.4pt);
\draw [fill=lightgray] (-3.464102, -5) circle (5.4pt);
\draw [fill=lightgray] (-1.732051, -5) circle (5.4pt);
\draw [fill=lightgray] (0, -5) circle (5.4pt);
\draw [fill=lightgray] (-2.598076, 5.5) circle (5.4pt);
\draw [fill=lightgray] (-0.8660254, 5.5) circle (5.4pt);
\draw [fill=lightgray] (0.8660254, 5.5) circle (5.4pt);
\draw [fill=lightgray] (-6.062178, -1.5) circle (5.4pt);
\draw [fill=lightgray] (-6.062178, 1.5) circle (5.4pt);
\draw [fill=lightgray] (-5.196152, -3) circle (5.4pt);
\draw [fill=lightgray] (-5.196152, 3) circle (5.4pt);
\draw [fill=lightgray] (-4.330127, -4.5) circle (5.4pt);
\draw [fill=lightgray] (-4.330127, 4.5) circle (5.4pt);
\draw [fill=lightgray] (-3.464102, 6) circle (5.4pt);
\draw [fill=lightgray] (-1.732051, 6) circle (5.4pt);
\draw [fill=lightgray] (0, 6) circle (5.4pt);
\draw [fill=lightgray] (-6.062178, 2.5) circle (5.4pt);
\draw [fill=lightgray] (-6.062178, -3.5) circle (5.4pt);
\draw [fill=lightgray] (-5.196152, 4) circle (5.4pt);
\draw [fill=lightgray] (-5.196152, -5) circle (5.4pt);
\draw [fill=lightgray] (-4.330127, 5.5) circle (5.4pt);
\draw [fill=lightgray] (-6.062178, -4.5) circle (5.4pt);
\draw [fill=lightgray] (-6.062178, 4.5) circle (5.4pt);
\draw [fill=lightgray] (-5.196152, 6) circle (5.4pt);
\draw [fill=lightgray] (-6.062178, 5.5) circle (5.4pt);
\draw [red,fill opacity=0,line width=1.8pt] (0, 0) circle (14.4pt);
\end{tikzpicture}
\caption{Turn $2\tau + 1$.}
\label{Accel Ray}
\end{subfigure}
\hspace{0.05\textwidth}
\hspace*{-1em}
\begin{subfigure}{0.20238095238095238\textwidth}
\begin{tikzpicture}[scale=0.4]
\definecolor{lightgray}{RGB}{195,195,195}
\definecolor{smoothred}{RGB}{255,150,150};
\definecolor{smoothgreen}{RGB}{0,105,105};
\draw [gray,very thin] (-6.062178, -6)--(-6.062178, 7);
\draw [gray,very thin] (-5.196152, -6)--(-5.196152, 7);
\draw [gray,very thin] (-4.330127, -6)--(-4.330127, 7);
\draw [gray,very thin] (-3.464102, -6)--(-3.464102, 7);
\draw [gray,very thin] (-2.598076, -6)--(-2.598076, 7);
\draw [gray,very thin] (-1.732051, -6)--(-1.732051, 7);
\draw [gray,very thin] (-0.8660254, -6)--(-0.8660254, 7);
\draw [gray,very thin] (0, -6)--(0, 7);
\draw [gray,very thin] (0.8660254, -6)--(0.8660254, 7);
\draw [gray,very thin] (-6.928203, -5.5)--(1.732051, -5.5);
\draw [gray,very thin] (-6.928203, -5)--(1.732051, -5);
\draw [gray,very thin] (-6.928203, -4.5)--(1.732051, -4.5);
\draw [gray,very thin] (-6.928203, -4)--(1.732051, -4);
\draw [gray,very thin] (-6.928203, -3.5)--(1.732051, -3.5);
\draw [gray,very thin] (-6.928203, -3)--(1.732051, -3);
\draw [gray,very thin] (-6.928203, -2.5)--(1.732051, -2.5);
\draw [gray,very thin] (-6.928203, -2)--(1.732051, -2);
\draw [gray,very thin] (-6.928203, -1.5)--(1.732051, -1.5);
\draw [gray,very thin] (-6.928203, -1)--(1.732051, -1);
\draw [gray,very thin] (-6.928203, -0.5)--(1.732051, -0.5);
\draw [gray,very thin] (-6.928203, 0)--(1.732051, 0);
\draw [gray,very thin] (-6.928203, 0.5)--(1.732051, 0.5);
\draw [gray,very thin] (-6.928203, 1)--(1.732051, 1);
\draw [gray,very thin] (-6.928203, 1.5)--(1.732051, 1.5);
\draw [gray,very thin] (-6.928203, 2)--(1.732051, 2);
\draw [gray,very thin] (-6.928203, 2.5)--(1.732051, 2.5);
\draw [gray,very thin] (-6.928203, 3)--(1.732051, 3);
\draw [gray,very thin] (-6.928203, 3.5)--(1.732051, 3.5);
\draw [gray,very thin] (-6.928203, 4)--(1.732051, 4);
\draw [gray,very thin] (-6.928203, 4.5)--(1.732051, 4.5);
\draw [gray,very thin] (-6.928203, 5)--(1.732051, 5);
\draw [gray,very thin] (-6.928203, 5.5)--(1.732051, 5.5);
\draw [gray,very thin] (-6.928203, 6)--(1.732051, 6);
\draw [gray,very thin] (-6.928203, 6.5)--(1.732051, 6.5);
\draw [very thick] (-0.8660254, -0.5)--(0, 0);
\draw [very thick] (0.8660254, -0.5)--(0, 0);
\draw [very thick] (0, 1)--(0, 0);
\draw [very thick] (-1.732051, 0)--(-0.8660254, -0.5);
\draw [very thick] (-0.8660254, -1.5)--(-0.8660254, -0.5);
\draw [very thick] (1.732051, 0)--(0.8660254, -0.5);
\draw [very thick] (0.8660254, -1.5)--(0.8660254, -0.5);
\draw [very thick] (-0.8660254, 1.5)--(0, 1);
\draw [very thick] (0.8660254, 1.5)--(0, 1);
\draw [very thick] (-2.598076, -0.5)--(-1.732051, 0);
\draw [very thick] (-1.732051, 1)--(-0.8660254, 1.5);
\draw [very thick] (-1.732051, 1)--(-1.732051, 0);
\draw [very thick] (-1.732051, -2)--(-0.8660254, -1.5);
\draw [very thick] (0, -2)--(-0.8660254, -1.5);
\draw [very thick] (0, -2)--(0.8660254, -1.5);
\draw [very thick] (1.732051, 1)--(0.8660254, 1.5);
\draw [very thick] (1.732051, -2)--(0.8660254, -1.5);
\draw [very thick] (-0.8660254, 2.5)--(-0.8660254, 1.5);
\draw [very thick] (0.8660254, 2.5)--(0.8660254, 1.5);
\draw [very thick] (-3.464102, 0)--(-2.598076, -0.5);
\draw [very thick] (-2.598076, -1.5)--(-1.732051, -2);
\draw [very thick] (-2.598076, -1.5)--(-2.598076, -0.5);
\draw [very thick] (-2.598076, 1.5)--(-1.732051, 1);
\draw [very thick] (-1.732051, -3)--(-1.732051, -2);
\draw [very thick] (0, -3)--(0, -2);
\draw [very thick] (-1.732051, 3)--(-0.8660254, 2.5);
\draw [very thick] (0, 3)--(-0.8660254, 2.5);
\draw [very thick] (0, 3)--(0.8660254, 2.5);
\draw [very thick] (1.732051, 3)--(0.8660254, 2.5);
\draw [very thick] (-4.330127, -0.5)--(-3.464102, 0);
\draw [very thick] (-3.464102, 1)--(-2.598076, 1.5);
\draw [very thick] (-3.464102, 1)--(-3.464102, 0);
\draw [very thick] (-3.464102, -2)--(-2.598076, -1.5);
\draw [very thick] (-2.598076, 2.5)--(-1.732051, 3);
\draw [very thick] (-2.598076, 2.5)--(-2.598076, 1.5);
\draw [very thick] (-2.598076, -3.5)--(-1.732051, -3);
\draw [very thick] (-0.8660254, -3.5)--(-1.732051, -3);
\draw [very thick] (-0.8660254, -3.5)--(0, -3);
\draw [very thick] (0.8660254, -3.5)--(0, -3);
\draw [very thick] (0.8660254, -3.5)--(1.732051, -3);
\draw [very thick] (-1.732051, 4)--(-1.732051, 3);
\draw [very thick] (0, 4)--(0, 3);
\draw [very thick] (-5.196152, 0)--(-4.330127, -0.5);
\draw [very thick] (-4.330127, -1.5)--(-3.464102, -2);
\draw [very thick] (-4.330127, -1.5)--(-4.330127, -0.5);
\draw [very thick] (-4.330127, 1.5)--(-3.464102, 1);
\draw [very thick] (-3.464102, -3)--(-2.598076, -3.5);
\draw [very thick] (-3.464102, -3)--(-3.464102, -2);
\draw [very thick] (-3.464102, 3)--(-2.598076, 2.5);
\draw [very thick] (-2.598076, -4.5)--(-2.598076, -3.5);
\draw [very thick] (-0.8660254, -4.5)--(-0.8660254, -3.5);
\draw [very thick] (0.8660254, -4.5)--(0.8660254, -3.5);
\draw [very thick] (-2.598076, 4.5)--(-1.732051, 4);
\draw [very thick] (-0.8660254, 4.5)--(-1.732051, 4);
\draw [very thick] (-0.8660254, 4.5)--(0, 4);
\draw [very thick] (0.8660254, 4.5)--(0, 4);
\draw [very thick] (0.8660254, 4.5)--(1.732051, 4);
\draw [very thick] (-6.062178, -0.5)--(-5.196152, 0);
\draw [very thick] (-5.196152, 1)--(-4.330127, 1.5);
\draw [very thick] (-5.196152, 1)--(-5.196152, 0);
\draw [very thick] (-5.196152, -2)--(-4.330127, -1.5);
\draw [very thick] (-4.330127, 2.5)--(-3.464102, 3);
\draw [very thick] (-4.330127, 2.5)--(-4.330127, 1.5);
\draw [very thick] (-4.330127, -3.5)--(-3.464102, -3);
\draw [very thick] (-3.464102, 4)--(-2.598076, 4.5);
\draw [very thick] (-3.464102, 4)--(-3.464102, 3);
\draw [very thick] (-3.464102, -5)--(-2.598076, -4.5);
\draw [very thick] (-1.732051, -5)--(-2.598076, -4.5);
\draw [very thick] (-1.732051, -5)--(-0.8660254, -4.5);
\draw [very thick] (0, -5)--(-0.8660254, -4.5);
\draw [very thick] (0, -5)--(0.8660254, -4.5);
\draw [very thick] (1.732051, -5)--(0.8660254, -4.5);
\draw [very thick] (-2.598076, 5.5)--(-2.598076, 4.5);
\draw [very thick] (-0.8660254, 5.5)--(-0.8660254, 4.5);
\draw [very thick] (0.8660254, 5.5)--(0.8660254, 4.5);
\draw [very thick] (-6.928203, 0)--(-6.062178, -0.5);
\draw [very thick] (-6.062178, -1.5)--(-5.196152, -2);
\draw [very thick] (-6.062178, -1.5)--(-6.062178, -0.5);
\draw [very thick] (-6.062178, 1.5)--(-5.196152, 1);
\draw [very thick] (-5.196152, -3)--(-4.330127, -3.5);
\draw [very thick] (-5.196152, -3)--(-5.196152, -2);
\draw [very thick] (-5.196152, 3)--(-4.330127, 2.5);
\draw [very thick] (-4.330127, -4.5)--(-3.464102, -5);
\draw [very thick] (-4.330127, -4.5)--(-4.330127, -3.5);
\draw [very thick] (-4.330127, 4.5)--(-3.464102, 4);
\draw [very thick] (-3.464102, -6)--(-3.464102, -5);
\draw [very thick] (-1.732051, -6)--(-1.732051, -5);
\draw [very thick] (0, -6)--(0, -5);
\draw [very thick] (-3.464102, 6)--(-2.598076, 5.5);
\draw [very thick] (-1.732051, 6)--(-2.598076, 5.5);
\draw [very thick] (-1.732051, 6)--(-0.8660254, 5.5);
\draw [very thick] (0, 6)--(-0.8660254, 5.5);
\draw [very thick] (0, 6)--(0.8660254, 5.5);
\draw [very thick] (1.732051, 6)--(0.8660254, 5.5);
\draw [very thick] (-6.928203, 1)--(-6.062178, 1.5);
\draw [very thick] (-6.928203, -2)--(-6.062178, -1.5);
\draw [very thick] (-6.062178, 2.5)--(-5.196152, 3);
\draw [very thick] (-6.062178, 2.5)--(-6.062178, 1.5);
\draw [very thick] (-6.062178, -3.5)--(-5.196152, -3);
\draw [very thick] (-5.196152, 4)--(-4.330127, 4.5);
\draw [very thick] (-5.196152, 4)--(-5.196152, 3);
\draw [very thick] (-5.196152, -5)--(-4.330127, -4.5);
\draw [very thick] (-4.330127, 5.5)--(-3.464102, 6);
\draw [very thick] (-4.330127, 5.5)--(-4.330127, 4.5);
\draw [very thick] (-3.464102, 7)--(-3.464102, 6);
\draw [very thick] (-1.732051, 7)--(-1.732051, 6);
\draw [very thick] (0, 7)--(0, 6);
\draw [very thick] (-6.928203, -3)--(-6.062178, -3.5);
\draw [very thick] (-6.928203, 3)--(-6.062178, 2.5);
\draw [very thick] (-6.062178, -4.5)--(-5.196152, -5);
\draw [very thick] (-6.062178, -4.5)--(-6.062178, -3.5);
\draw [very thick] (-6.062178, 4.5)--(-5.196152, 4);
\draw [very thick] (-5.196152, -6)--(-5.196152, -5);
\draw [very thick] (-5.196152, 6)--(-4.330127, 5.5);
\draw [very thick] (-6.928203, 4)--(-6.062178, 4.5);
\draw [very thick] (-6.928203, -5)--(-6.062178, -4.5);
\draw [very thick] (-6.062178, 5.5)--(-5.196152, 6);
\draw [very thick] (-6.062178, 5.5)--(-6.062178, 4.5);
\draw [very thick] (-5.196152, 7)--(-5.196152, 6);
\draw [very thick] (-6.928203, 6)--(-6.062178, 5.5);
\draw [black,fill=red,rotate around={45:(0, 0)}] plot[mark=square*,mark size=5.4pt] (0, 0);
\draw [black,fill=smoothred,rotate around={45:(-0.8660254, -0.5)}] plot[mark=square*,mark size=5.4pt] (-0.8660254, -0.5);
\draw [black,fill=smoothgreen] plot[mark=square*,mark size=5.4pt] (0.8660254, -0.5);
\draw [black,fill=smoothred,rotate around={45:(0, 1)}] plot[mark=square*,mark size=5.4pt] (0, 1);
\draw [black,fill=smoothred,rotate around={45:(-1.732051, 0)}] plot[mark=square*,mark size=5.4pt] (-1.732051, 0);
\draw [black,fill=smoothred,rotate around={45:(-0.8660254, -1.5)}] plot[mark=square*,mark size=5.4pt] (-0.8660254, -1.5);
\draw [fill=lightgray] (0.8660254, -1.5) circle (5.4pt);
\draw [black,fill=smoothred,rotate around={45:(-0.8660254, 1.5)}] plot[mark=square*,mark size=5.4pt] (-0.8660254, 1.5);
\draw [black,fill=smoothgreen] plot[mark=square*,mark size=5.4pt] (0.8660254, 1.5);
\draw [black,fill=smoothred,rotate around={45:(-2.598076, -0.5)}] plot[mark=square*,mark size=5.4pt] (-2.598076, -0.5);
\draw [black,fill=smoothred,rotate around={45:(-1.732051, 1)}] plot[mark=square*,mark size=5.4pt] (-1.732051, 1);
\draw [black,fill=smoothred,rotate around={45:(-1.732051, -2)}] plot[mark=square*,mark size=5.4pt] (-1.732051, -2);
\draw [black,fill=smoothgreen] plot[mark=square*,mark size=5.4pt] (0, -2);
\draw [black,fill=smoothred,rotate around={45:(-0.8660254, 2.5)}] plot[mark=square*,mark size=5.4pt] (-0.8660254, 2.5);
\draw [fill=lightgray] (0.8660254, 2.5) circle (5.4pt);
\draw [black,fill=smoothred,rotate around={45:(-3.464102, 0)}] plot[mark=square*,mark size=5.4pt] (-3.464102, 0);
\draw [black,fill=smoothred,rotate around={45:(-2.598076, -1.5)}] plot[mark=square*,mark size=5.4pt] (-2.598076, -1.5);
\draw [black,fill=smoothred,rotate around={45:(-2.598076, 1.5)}] plot[mark=square*,mark size=5.4pt] (-2.598076, 1.5);
\draw [black,fill=smoothred,rotate around={45:(-1.732051, -3)}] plot[mark=square*,mark size=5.4pt] (-1.732051, -3);
\draw [fill=lightgray] (0, -3) circle (5.4pt);
\draw [black,fill=smoothred,rotate around={45:(-1.732051, 3)}] plot[mark=square*,mark size=5.4pt] (-1.732051, 3);
\draw [black,fill=smoothgreen] plot[mark=square*,mark size=5.4pt] (0, 3);
\draw [black,fill=smoothred,rotate around={45:(-4.330127, -0.5)}] plot[mark=square*,mark size=5.4pt] (-4.330127, -0.5);
\draw [black,fill=smoothred,rotate around={45:(-3.464102, 1)}] plot[mark=square*,mark size=5.4pt] (-3.464102, 1);
\draw [black,fill=smoothred,rotate around={45:(-3.464102, -2)}] plot[mark=square*,mark size=5.4pt] (-3.464102, -2);
\draw [black,fill=smoothred,rotate around={45:(-2.598076, 2.5)}] plot[mark=square*,mark size=5.4pt] (-2.598076, 2.5);
\draw [black,fill=smoothred,rotate around={45:(-2.598076, -3.5)}] plot[mark=square*,mark size=5.4pt] (-2.598076, -3.5);
\draw [black,fill=smoothgreen] plot[mark=square*,mark size=5.4pt] (-0.8660254, -3.5);
\draw [fill=lightgray] (0.8660254, -3.5) circle (5.4pt);
\draw [black,fill=smoothred,rotate around={45:(-1.732051, 4)}] plot[mark=square*,mark size=5.4pt] (-1.732051, 4);
\draw [fill=lightgray] (0, 4) circle (5.4pt);
\draw [black,fill=smoothred,rotate around={45:(-5.196152, 0)}] plot[mark=square*,mark size=5.4pt] (-5.196152, 0);
\draw [black,fill=smoothred,rotate around={45:(-4.330127, -1.5)}] plot[mark=square*,mark size=5.4pt] (-4.330127, -1.5);
\draw [black,fill=smoothred,rotate around={45:(-4.330127, 1.5)}] plot[mark=square*,mark size=5.4pt] (-4.330127, 1.5);
\draw [black,fill=smoothred,rotate around={45:(-3.464102, -3)}] plot[mark=square*,mark size=5.4pt] (-3.464102, -3);
\draw [black,fill=smoothred,rotate around={45:(-3.464102, 3)}] plot[mark=square*,mark size=5.4pt] (-3.464102, 3);
\draw [black,fill=smoothgreen] plot[mark=square*,mark size=5.4pt] (-2.598076, -4.5);
\draw [fill=lightgray] (-0.8660254, -4.5) circle (5.4pt);
\draw [fill=lightgray] (0.8660254, -4.5) circle (5.4pt);
\draw [black,fill=smoothred,rotate around={45:(-2.598076, 4.5)}] plot[mark=square*,mark size=5.4pt] (-2.598076, 4.5);
\draw [black,fill=smoothgreen] plot[mark=square*,mark size=5.4pt] (-0.8660254, 4.5);
\draw [fill=lightgray] (0.8660254, 4.5) circle (5.4pt);
\draw [black,fill=smoothred,rotate around={45:(-6.062178, -0.5)}] plot[mark=square*,mark size=5.4pt] (-6.062178, -0.5);
\draw [black,fill=smoothred,rotate around={45:(-5.196152, 1)}] plot[mark=square*,mark size=5.4pt] (-5.196152, 1);
\draw [black,fill=smoothred,rotate around={45:(-5.196152, -2)}] plot[mark=square*,mark size=5.4pt] (-5.196152, -2);
\draw [black,fill=smoothred,rotate around={45:(-4.330127, 2.5)}] plot[mark=square*,mark size=5.4pt] (-4.330127, 2.5);
\draw [black,fill=smoothred,rotate around={45:(-4.330127, -3.5)}] plot[mark=square*,mark size=5.4pt] (-4.330127, -3.5);
\draw [black,fill=smoothred,rotate around={45:(-3.464102, 4)}] plot[mark=square*,mark size=5.4pt] (-3.464102, 4);
\draw [fill=lightgray] (-3.464102, -5) circle (5.4pt);
\draw [fill=lightgray] (-1.732051, -5) circle (5.4pt);
\draw [fill=lightgray] (0, -5) circle (5.4pt);
\draw [black,fill=smoothgreen] plot[mark=square*,mark size=5.4pt] (-2.598076, 5.5);
\draw [fill=lightgray] (-0.8660254, 5.5) circle (5.4pt);
\draw [fill=lightgray] (0.8660254, 5.5) circle (5.4pt);
\draw [fill=lightgray] (-6.062178, -1.5) circle (5.4pt);
\draw [fill=lightgray] (-6.062178, 1.5) circle (5.4pt);
\draw [fill=lightgray] (-5.196152, -3) circle (5.4pt);
\draw [fill=lightgray] (-5.196152, 3) circle (5.4pt);
\draw [fill=lightgray] (-4.330127, -4.5) circle (5.4pt);
\draw [fill=lightgray] (-4.330127, 4.5) circle (5.4pt);
\draw [fill=lightgray] (-3.464102, 6) circle (5.4pt);
\draw [fill=lightgray] (-1.732051, 6) circle (5.4pt);
\draw [fill=lightgray] (0, 6) circle (5.4pt);
\draw [fill=lightgray] (-6.062178, 2.5) circle (5.4pt);
\draw [fill=lightgray] (-6.062178, -3.5) circle (5.4pt);
\draw [fill=lightgray] (-5.196152, 4) circle (5.4pt);
\draw [fill=lightgray] (-5.196152, -5) circle (5.4pt);
\draw [fill=lightgray] (-4.330127, 5.5) circle (5.4pt);
\draw [fill=lightgray] (-6.062178, -4.5) circle (5.4pt);
\draw [fill=lightgray] (-6.062178, 4.5) circle (5.4pt);
\draw [fill=lightgray] (-5.196152, 6) circle (5.4pt);
\draw [fill=lightgray] (-6.062178, 5.5) circle (5.4pt);
\draw [red,fill opacity=0,line width=1.8pt] (0, 0) circle (14.4pt);
\end{tikzpicture}
\caption{Turn $2\tau + 5$.}
\label{Bent Ray}
\end{subfigure}
\hspace{0.05\textwidth}
\hspace*{-1em}
\begin{subfigure}{0.24285714285714283\textwidth}
\begin{tikzpicture}[scale=0.4]
\definecolor{lightgray}{RGB}{195,195,195}
\definecolor{smoothred}{RGB}{255,150,150};
\definecolor{smoothgreen}{RGB}{0,105,105};
\draw [gray,very thin] (-7.794229, -6)--(-7.794229, 7);
\draw [gray,very thin] (-6.928203, -6)--(-6.928203, 7);
\draw [gray,very thin] (-6.062178, -6)--(-6.062178, 7);
\draw [gray,very thin] (-5.196152, -6)--(-5.196152, 7);
\draw [gray,very thin] (-4.330127, -6)--(-4.330127, 7);
\draw [gray,very thin] (-3.464102, -6)--(-3.464102, 7);
\draw [gray,very thin] (-2.598076, -6)--(-2.598076, 7);
\draw [gray,very thin] (-1.732051, -6)--(-1.732051, 7);
\draw [gray,very thin] (-0.8660254, -6)--(-0.8660254, 7);
\draw [gray,very thin] (0, -6)--(0, 7);
\draw [gray,very thin] (0.8660254, -6)--(0.8660254, 7);
\draw [gray,very thin] (-8.660254, -5.5)--(1.732051, -5.5);
\draw [gray,very thin] (-8.660254, -5)--(1.732051, -5);
\draw [gray,very thin] (-8.660254, -4.5)--(1.732051, -4.5);
\draw [gray,very thin] (-8.660254, -4)--(1.732051, -4);
\draw [gray,very thin] (-8.660254, -3.5)--(1.732051, -3.5);
\draw [gray,very thin] (-8.660254, -3)--(1.732051, -3);
\draw [gray,very thin] (-8.660254, -2.5)--(1.732051, -2.5);
\draw [gray,very thin] (-8.660254, -2)--(1.732051, -2);
\draw [gray,very thin] (-8.660254, -1.5)--(1.732051, -1.5);
\draw [gray,very thin] (-8.660254, -1)--(1.732051, -1);
\draw [gray,very thin] (-8.660254, -0.5)--(1.732051, -0.5);
\draw [gray,very thin] (-8.660254, 0)--(1.732051, 0);
\draw [gray,very thin] (-8.660254, 0.5)--(1.732051, 0.5);
\draw [gray,very thin] (-8.660254, 1)--(1.732051, 1);
\draw [gray,very thin] (-8.660254, 1.5)--(1.732051, 1.5);
\draw [gray,very thin] (-8.660254, 2)--(1.732051, 2);
\draw [gray,very thin] (-8.660254, 2.5)--(1.732051, 2.5);
\draw [gray,very thin] (-8.660254, 3)--(1.732051, 3);
\draw [gray,very thin] (-8.660254, 3.5)--(1.732051, 3.5);
\draw [gray,very thin] (-8.660254, 4)--(1.732051, 4);
\draw [gray,very thin] (-8.660254, 4.5)--(1.732051, 4.5);
\draw [gray,very thin] (-8.660254, 5)--(1.732051, 5);
\draw [gray,very thin] (-8.660254, 5.5)--(1.732051, 5.5);
\draw [gray,very thin] (-8.660254, 6)--(1.732051, 6);
\draw [gray,very thin] (-8.660254, 6.5)--(1.732051, 6.5);
\draw [very thick] (-0.8660254, -0.5)--(0, 0);
\draw [very thick] (0.8660254, -0.5)--(0, 0);
\draw [very thick] (0, 1)--(0, 0);
\draw [very thick] (-1.732051, 0)--(-0.8660254, -0.5);
\draw [very thick] (-0.8660254, -1.5)--(-0.8660254, -0.5);
\draw [very thick] (1.732051, 0)--(0.8660254, -0.5);
\draw [very thick] (0.8660254, -1.5)--(0.8660254, -0.5);
\draw [very thick] (-0.8660254, 1.5)--(0, 1);
\draw [very thick] (0.8660254, 1.5)--(0, 1);
\draw [very thick] (-2.598076, -0.5)--(-1.732051, 0);
\draw [very thick] (-1.732051, 1)--(-0.8660254, 1.5);
\draw [very thick] (-1.732051, 1)--(-1.732051, 0);
\draw [very thick] (-1.732051, -2)--(-0.8660254, -1.5);
\draw [very thick] (0, -2)--(-0.8660254, -1.5);
\draw [very thick] (0, -2)--(0.8660254, -1.5);
\draw [very thick] (1.732051, 1)--(0.8660254, 1.5);
\draw [very thick] (1.732051, -2)--(0.8660254, -1.5);
\draw [very thick] (-0.8660254, 2.5)--(-0.8660254, 1.5);
\draw [very thick] (0.8660254, 2.5)--(0.8660254, 1.5);
\draw [very thick] (-3.464102, 0)--(-2.598076, -0.5);
\draw [very thick] (-2.598076, -1.5)--(-1.732051, -2);
\draw [very thick] (-2.598076, -1.5)--(-2.598076, -0.5);
\draw [very thick] (-2.598076, 1.5)--(-1.732051, 1);
\draw [very thick] (-1.732051, -3)--(-1.732051, -2);
\draw [very thick] (0, -3)--(0, -2);
\draw [very thick] (-1.732051, 3)--(-0.8660254, 2.5);
\draw [very thick] (0, 3)--(-0.8660254, 2.5);
\draw [very thick] (0, 3)--(0.8660254, 2.5);
\draw [very thick] (1.732051, 3)--(0.8660254, 2.5);
\draw [very thick] (-4.330127, -0.5)--(-3.464102, 0);
\draw [very thick] (-3.464102, 1)--(-2.598076, 1.5);
\draw [very thick] (-3.464102, 1)--(-3.464102, 0);
\draw [very thick] (-3.464102, -2)--(-2.598076, -1.5);
\draw [very thick] (-2.598076, 2.5)--(-1.732051, 3);
\draw [very thick] (-2.598076, 2.5)--(-2.598076, 1.5);
\draw [very thick] (-2.598076, -3.5)--(-1.732051, -3);
\draw [very thick] (-0.8660254, -3.5)--(-1.732051, -3);
\draw [very thick] (-0.8660254, -3.5)--(0, -3);
\draw [very thick] (0.8660254, -3.5)--(0, -3);
\draw [very thick] (0.8660254, -3.5)--(1.732051, -3);
\draw [very thick] (-1.732051, 4)--(-1.732051, 3);
\draw [very thick] (0, 4)--(0, 3);
\draw [very thick] (-5.196152, 0)--(-4.330127, -0.5);
\draw [very thick] (-4.330127, -1.5)--(-3.464102, -2);
\draw [very thick] (-4.330127, -1.5)--(-4.330127, -0.5);
\draw [very thick] (-4.330127, 1.5)--(-3.464102, 1);
\draw [very thick] (-3.464102, -3)--(-2.598076, -3.5);
\draw [very thick] (-3.464102, -3)--(-3.464102, -2);
\draw [very thick] (-3.464102, 3)--(-2.598076, 2.5);
\draw [very thick] (-2.598076, -4.5)--(-2.598076, -3.5);
\draw [very thick] (-0.8660254, -4.5)--(-0.8660254, -3.5);
\draw [very thick] (0.8660254, -4.5)--(0.8660254, -3.5);
\draw [very thick] (-2.598076, 4.5)--(-1.732051, 4);
\draw [very thick] (-0.8660254, 4.5)--(-1.732051, 4);
\draw [very thick] (-0.8660254, 4.5)--(0, 4);
\draw [very thick] (0.8660254, 4.5)--(0, 4);
\draw [very thick] (0.8660254, 4.5)--(1.732051, 4);
\draw [very thick] (-6.062178, -0.5)--(-5.196152, 0);
\draw [very thick] (-5.196152, 1)--(-4.330127, 1.5);
\draw [very thick] (-5.196152, 1)--(-5.196152, 0);
\draw [very thick] (-5.196152, -2)--(-4.330127, -1.5);
\draw [very thick] (-4.330127, 2.5)--(-3.464102, 3);
\draw [very thick] (-4.330127, 2.5)--(-4.330127, 1.5);
\draw [very thick] (-4.330127, -3.5)--(-3.464102, -3);
\draw [very thick] (-3.464102, 4)--(-2.598076, 4.5);
\draw [very thick] (-3.464102, 4)--(-3.464102, 3);
\draw [very thick] (-3.464102, -5)--(-2.598076, -4.5);
\draw [very thick] (-1.732051, -5)--(-2.598076, -4.5);
\draw [very thick] (-1.732051, -5)--(-0.8660254, -4.5);
\draw [very thick] (0, -5)--(-0.8660254, -4.5);
\draw [very thick] (0, -5)--(0.8660254, -4.5);
\draw [very thick] (1.732051, -5)--(0.8660254, -4.5);
\draw [very thick] (-2.598076, 5.5)--(-2.598076, 4.5);
\draw [very thick] (-0.8660254, 5.5)--(-0.8660254, 4.5);
\draw [very thick] (0.8660254, 5.5)--(0.8660254, 4.5);
\draw [very thick] (-6.928203, 0)--(-6.062178, -0.5);
\draw [very thick] (-6.062178, -1.5)--(-5.196152, -2);
\draw [very thick] (-6.062178, -1.5)--(-6.062178, -0.5);
\draw [very thick] (-6.062178, 1.5)--(-5.196152, 1);
\draw [very thick] (-5.196152, -3)--(-4.330127, -3.5);
\draw [very thick] (-5.196152, -3)--(-5.196152, -2);
\draw [very thick] (-5.196152, 3)--(-4.330127, 2.5);
\draw [very thick] (-4.330127, -4.5)--(-3.464102, -5);
\draw [very thick] (-4.330127, -4.5)--(-4.330127, -3.5);
\draw [very thick] (-4.330127, 4.5)--(-3.464102, 4);
\draw [very thick] (-3.464102, -6)--(-3.464102, -5);
\draw [very thick] (-1.732051, -6)--(-1.732051, -5);
\draw [very thick] (0, -6)--(0, -5);
\draw [very thick] (-3.464102, 6)--(-2.598076, 5.5);
\draw [very thick] (-1.732051, 6)--(-2.598076, 5.5);
\draw [very thick] (-1.732051, 6)--(-0.8660254, 5.5);
\draw [very thick] (0, 6)--(-0.8660254, 5.5);
\draw [very thick] (0, 6)--(0.8660254, 5.5);
\draw [very thick] (1.732051, 6)--(0.8660254, 5.5);
\draw [very thick] (-7.794229, -0.5)--(-6.928203, 0);
\draw [very thick] (-6.928203, 1)--(-6.062178, 1.5);
\draw [very thick] (-6.928203, 1)--(-6.928203, 0);
\draw [very thick] (-6.928203, -2)--(-6.062178, -1.5);
\draw [very thick] (-6.062178, 2.5)--(-5.196152, 3);
\draw [very thick] (-6.062178, 2.5)--(-6.062178, 1.5);
\draw [very thick] (-6.062178, -3.5)--(-5.196152, -3);
\draw [very thick] (-5.196152, 4)--(-4.330127, 4.5);
\draw [very thick] (-5.196152, 4)--(-5.196152, 3);
\draw [very thick] (-5.196152, -5)--(-4.330127, -4.5);
\draw [very thick] (-4.330127, 5.5)--(-3.464102, 6);
\draw [very thick] (-4.330127, 5.5)--(-4.330127, 4.5);
\draw [very thick] (-3.464102, 7)--(-3.464102, 6);
\draw [very thick] (-1.732051, 7)--(-1.732051, 6);
\draw [very thick] (0, 7)--(0, 6);
\draw [very thick] (-8.660254, 0)--(-7.794229, -0.5);
\draw [very thick] (-7.794229, -1.5)--(-6.928203, -2);
\draw [very thick] (-7.794229, -1.5)--(-7.794229, -0.5);
\draw [very thick] (-7.794229, 1.5)--(-6.928203, 1);
\draw [very thick] (-6.928203, -3)--(-6.062178, -3.5);
\draw [very thick] (-6.928203, -3)--(-6.928203, -2);
\draw [very thick] (-6.928203, 3)--(-6.062178, 2.5);
\draw [very thick] (-6.062178, -4.5)--(-5.196152, -5);
\draw [very thick] (-6.062178, -4.5)--(-6.062178, -3.5);
\draw [very thick] (-6.062178, 4.5)--(-5.196152, 4);
\draw [very thick] (-5.196152, -6)--(-5.196152, -5);
\draw [very thick] (-5.196152, 6)--(-4.330127, 5.5);
\draw [very thick] (-8.660254, 1)--(-7.794229, 1.5);
\draw [very thick] (-8.660254, -2)--(-7.794229, -1.5);
\draw [very thick] (-7.794229, 2.5)--(-6.928203, 3);
\draw [very thick] (-7.794229, 2.5)--(-7.794229, 1.5);
\draw [very thick] (-7.794229, -3.5)--(-6.928203, -3);
\draw [very thick] (-6.928203, 4)--(-6.062178, 4.5);
\draw [very thick] (-6.928203, 4)--(-6.928203, 3);
\draw [very thick] (-6.928203, -5)--(-6.062178, -4.5);
\draw [very thick] (-6.062178, 5.5)--(-5.196152, 6);
\draw [very thick] (-6.062178, 5.5)--(-6.062178, 4.5);
\draw [very thick] (-5.196152, 7)--(-5.196152, 6);
\draw [very thick] (-8.660254, -3)--(-7.794229, -3.5);
\draw [very thick] (-8.660254, 3)--(-7.794229, 2.5);
\draw [very thick] (-7.794229, -4.5)--(-6.928203, -5);
\draw [very thick] (-7.794229, -4.5)--(-7.794229, -3.5);
\draw [very thick] (-7.794229, 4.5)--(-6.928203, 4);
\draw [very thick] (-6.928203, -6)--(-6.928203, -5);
\draw [very thick] (-6.928203, 6)--(-6.062178, 5.5);
\draw [very thick] (-8.660254, 4)--(-7.794229, 4.5);
\draw [very thick] (-8.660254, -5)--(-7.794229, -4.5);
\draw [very thick] (-7.794229, 5.5)--(-6.928203, 6);
\draw [very thick] (-7.794229, 5.5)--(-7.794229, 4.5);
\draw [very thick] (-6.928203, 7)--(-6.928203, 6);
\draw [very thick] (-8.660254, 6)--(-7.794229, 5.5);
\draw [black,fill=red,rotate around={45:(0, 0)}] plot[mark=square*,mark size=5.4pt] (0, 0);
\draw [black,fill=smoothred,rotate around={45:(-0.8660254, -0.5)}] plot[mark=square*,mark size=5.4pt] (-0.8660254, -0.5);
\draw [black,fill=smoothgreen] plot[mark=square*,mark size=5.4pt] (0.8660254, -0.5);
\draw [black,fill=smoothred,rotate around={45:(0, 1)}] plot[mark=square*,mark size=5.4pt] (0, 1);
\draw [black,fill=smoothred,rotate around={45:(-1.732051, 0)}] plot[mark=square*,mark size=5.4pt] (-1.732051, 0);
\draw [black,fill=smoothred,rotate around={45:(-0.8660254, -1.5)}] plot[mark=square*,mark size=5.4pt] (-0.8660254, -1.5);
\draw [fill=lightgray] (0.8660254, -1.5) circle (5.4pt);
\draw [black,fill=smoothred,rotate around={45:(-0.8660254, 1.5)}] plot[mark=square*,mark size=5.4pt] (-0.8660254, 1.5);
\draw [black,fill=smoothgreen] plot[mark=square*,mark size=5.4pt] (0.8660254, 1.5);
\draw [black,fill=smoothred,rotate around={45:(-2.598076, -0.5)}] plot[mark=square*,mark size=5.4pt] (-2.598076, -0.5);
\draw [black,fill=smoothred,rotate around={45:(-1.732051, 1)}] plot[mark=square*,mark size=5.4pt] (-1.732051, 1);
\draw [black,fill=smoothred,rotate around={45:(-1.732051, -2)}] plot[mark=square*,mark size=5.4pt] (-1.732051, -2);
\draw [black,fill=smoothgreen] plot[mark=square*,mark size=5.4pt] (0, -2);
\draw [black,fill=smoothred,rotate around={45:(-0.8660254, 2.5)}] plot[mark=square*,mark size=5.4pt] (-0.8660254, 2.5);
\draw [fill=lightgray] (0.8660254, 2.5) circle (5.4pt);
\draw [black,fill=smoothred,rotate around={45:(-3.464102, 0)}] plot[mark=square*,mark size=5.4pt] (-3.464102, 0);
\draw [black,fill=smoothred,rotate around={45:(-2.598076, -1.5)}] plot[mark=square*,mark size=5.4pt] (-2.598076, -1.5);
\draw [black,fill=smoothred,rotate around={45:(-2.598076, 1.5)}] plot[mark=square*,mark size=5.4pt] (-2.598076, 1.5);
\draw [black,fill=smoothred,rotate around={45:(-1.732051, -3)}] plot[mark=square*,mark size=5.4pt] (-1.732051, -3);
\draw [fill=lightgray] (0, -3) circle (5.4pt);
\draw [black,fill=smoothred,rotate around={45:(-1.732051, 3)}] plot[mark=square*,mark size=5.4pt] (-1.732051, 3);
\draw [black,fill=smoothgreen] plot[mark=square*,mark size=5.4pt] (0, 3);
\draw [black,fill=smoothred,rotate around={45:(-4.330127, -0.5)}] plot[mark=square*,mark size=5.4pt] (-4.330127, -0.5);
\draw [black,fill=smoothred,rotate around={45:(-3.464102, 1)}] plot[mark=square*,mark size=5.4pt] (-3.464102, 1);
\draw [black,fill=smoothred,rotate around={45:(-3.464102, -2)}] plot[mark=square*,mark size=5.4pt] (-3.464102, -2);
\draw [black,fill=smoothred,rotate around={45:(-2.598076, 2.5)}] plot[mark=square*,mark size=5.4pt] (-2.598076, 2.5);
\draw [black,fill=smoothred,rotate around={45:(-2.598076, -3.5)}] plot[mark=square*,mark size=5.4pt] (-2.598076, -3.5);
\draw [black,fill=smoothgreen] plot[mark=square*,mark size=5.4pt] (-0.8660254, -3.5);
\draw [fill=lightgray] (0.8660254, -3.5) circle (5.4pt);
\draw [black,fill=smoothred,rotate around={45:(-1.732051, 4)}] plot[mark=square*,mark size=5.4pt] (-1.732051, 4);
\draw [fill=lightgray] (0, 4) circle (5.4pt);
\draw [black,fill=smoothred,rotate around={45:(-5.196152, 0)}] plot[mark=square*,mark size=5.4pt] (-5.196152, 0);
\draw [black,fill=smoothred,rotate around={45:(-4.330127, -1.5)}] plot[mark=square*,mark size=5.4pt] (-4.330127, -1.5);
\draw [black,fill=smoothred,rotate around={45:(-4.330127, 1.5)}] plot[mark=square*,mark size=5.4pt] (-4.330127, 1.5);
\draw [black,fill=smoothred,rotate around={45:(-3.464102, -3)}] plot[mark=square*,mark size=5.4pt] (-3.464102, -3);
\draw [black,fill=smoothred,rotate around={45:(-3.464102, 3)}] plot[mark=square*,mark size=5.4pt] (-3.464102, 3);
\draw [black,fill=smoothgreen] plot[mark=square*,mark size=5.4pt] (-2.598076, -4.5);
\draw [fill=lightgray] (-0.8660254, -4.5) circle (5.4pt);
\draw [fill=lightgray] (0.8660254, -4.5) circle (5.4pt);
\draw [black,fill=smoothred,rotate around={45:(-2.598076, 4.5)}] plot[mark=square*,mark size=5.4pt] (-2.598076, 4.5);
\draw [black,fill=smoothgreen] plot[mark=square*,mark size=5.4pt] (-0.8660254, 4.5);
\draw [fill=lightgray] (0.8660254, 4.5) circle (5.4pt);
\draw [black,fill=smoothred,rotate around={45:(-6.062178, -0.5)}] plot[mark=square*,mark size=5.4pt] (-6.062178, -0.5);
\draw [black,fill=smoothred,rotate around={45:(-5.196152, 1)}] plot[mark=square*,mark size=5.4pt] (-5.196152, 1);
\draw [black,fill=smoothred,rotate around={45:(-5.196152, -2)}] plot[mark=square*,mark size=5.4pt] (-5.196152, -2);
\draw [black,fill=smoothred,rotate around={45:(-4.330127, 2.5)}] plot[mark=square*,mark size=5.4pt] (-4.330127, 2.5);
\draw [black,fill=smoothred,rotate around={45:(-4.330127, -3.5)}] plot[mark=square*,mark size=5.4pt] (-4.330127, -3.5);
\draw [black,fill=smoothred,rotate around={45:(-3.464102, 4)}] plot[mark=square*,mark size=5.4pt] (-3.464102, 4);
\draw [fill=lightgray] (-3.464102, -5) circle (5.4pt);
\draw [fill=lightgray] (-1.732051, -5) circle (5.4pt);
\draw [fill=lightgray] (0, -5) circle (5.4pt);
\draw [black,fill=smoothgreen] plot[mark=square*,mark size=5.4pt] (-2.598076, 5.5);
\draw [fill=lightgray] (-0.8660254, 5.5) circle (5.4pt);
\draw [fill=lightgray] (0.8660254, 5.5) circle (5.4pt);
\draw [black,fill=smoothred,rotate around={45:(-6.928203, 0)}] plot[mark=square*,mark size=5.4pt] (-6.928203, 0);
\draw [black,fill=smoothred,rotate around={45:(-6.062178, -1.5)}] plot[mark=square*,mark size=5.4pt] (-6.062178, -1.5);
\draw [black,fill=smoothred,rotate around={45:(-6.062178, 1.5)}] plot[mark=square*,mark size=5.4pt] (-6.062178, 1.5);
\draw [black,fill=smoothred,rotate around={45:(-5.196152, -3)}] plot[mark=square*,mark size=5.4pt] (-5.196152, -3);
\draw [black,fill=smoothred,rotate around={45:(-5.196152, 3)}] plot[mark=square*,mark size=5.4pt] (-5.196152, 3);
\draw [black,fill=smoothgreen] plot[mark=square*,mark size=5.4pt] (-4.330127, -4.5);
\draw [black,fill=smoothred,rotate around={45:(-4.330127, 4.5)}] plot[mark=square*,mark size=5.4pt] (-4.330127, 4.5);
\draw [fill=lightgray] (-3.464102, 6) circle (5.4pt);
\draw [fill=lightgray] (-1.732051, 6) circle (5.4pt);
\draw [fill=lightgray] (0, 6) circle (5.4pt);
\draw [black,fill=smoothred,rotate around={45:(-7.794229, -0.5)}] plot[mark=square*,mark size=5.4pt] (-7.794229, -0.5);
\draw [black,fill=smoothred,rotate around={45:(-6.928203, 1)}] plot[mark=square*,mark size=5.4pt] (-6.928203, 1);
\draw [black,fill=smoothred,rotate around={45:(-6.928203, -2)}] plot[mark=square*,mark size=5.4pt] (-6.928203, -2);
\draw [black,fill=smoothred,rotate around={45:(-6.062178, 2.5)}] plot[mark=square*,mark size=5.4pt] (-6.062178, 2.5);
\draw [black,fill=smoothred,rotate around={45:(-6.062178, -3.5)}] plot[mark=square*,mark size=5.4pt] (-6.062178, -3.5);
\draw [black,fill=smoothred,rotate around={45:(-5.196152, 4)}] plot[mark=square*,mark size=5.4pt] (-5.196152, 4);
\draw [fill=lightgray] (-5.196152, -5) circle (5.4pt);
\draw [black,fill=smoothgreen] plot[mark=square*,mark size=5.4pt] (-4.330127, 5.5);
\draw [fill=lightgray] (-7.794229, -1.5) circle (5.4pt);
\draw [fill=lightgray] (-7.794229, 1.5) circle (5.4pt);
\draw [fill=lightgray] (-6.928203, -3) circle (5.4pt);
\draw [fill=lightgray] (-6.928203, 3) circle (5.4pt);
\draw [fill=lightgray] (-6.062178, -4.5) circle (5.4pt);
\draw [fill=lightgray] (-6.062178, 4.5) circle (5.4pt);
\draw [fill=lightgray] (-5.196152, 6) circle (5.4pt);
\draw [fill=lightgray] (-7.794229, 2.5) circle (5.4pt);
\draw [fill=lightgray] (-7.794229, -3.5) circle (5.4pt);
\draw [fill=lightgray] (-6.928203, 4) circle (5.4pt);
\draw [fill=lightgray] (-6.928203, -5) circle (5.4pt);
\draw [fill=lightgray] (-6.062178, 5.5) circle (5.4pt);
\draw [fill=lightgray] (-7.794229, -4.5) circle (5.4pt);
\draw [fill=lightgray] (-7.794229, 4.5) circle (5.4pt);
\draw [fill=lightgray] (-6.928203, 6) circle (5.4pt);
\draw [fill=lightgray] (-7.794229, 5.5) circle (5.4pt);
\draw [red,fill opacity=0,line width=1.8pt] (0, 0) circle (14.4pt);
\end{tikzpicture}
\caption{Turn $2\tau + 9$.}
\label{Strip Begin}
\end{subfigure}
\caption{Grid after turns $2\tau - 1$, $2\tau + 1$, $2\tau + 5$, and $2\tau + 9$; $\tau = 5$ and $f$ is circled.}\label{figure building the strip}
\end{figure}
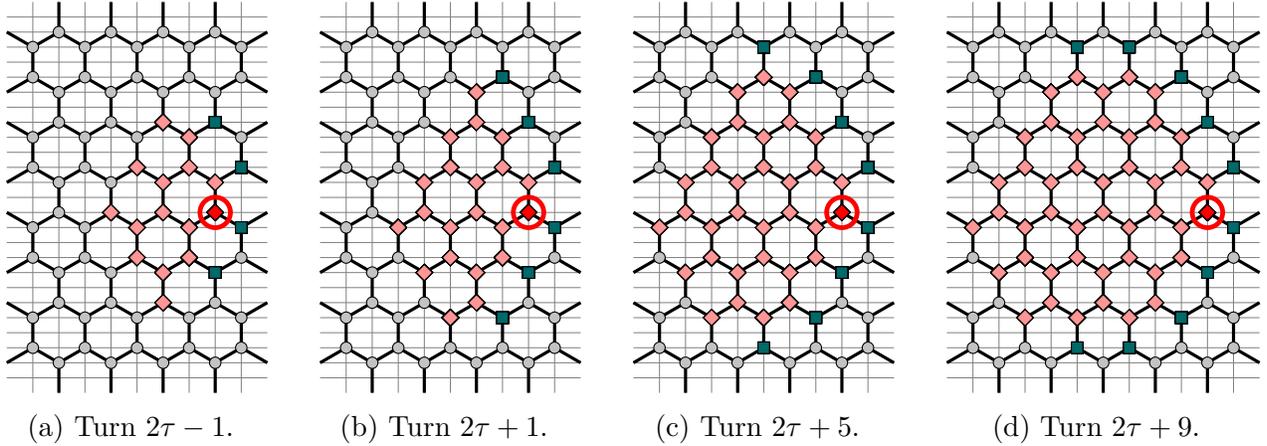

\subsection{Building a Protective Spiral}\label{spiral}
We will now bend the lower ray we built in Section \ref{protecc} into a clockwise spiral around the vertex $c = (-15\tau-13,0)$. Our goal is to construct this spiral in a way that it eventually collides with the upper ray, thus containing the fire. We first note where the actively burning vertices are.

\begin{observation}\label{observation activeburning}
After the fire spreads at turn $32\tau+27$, a vertex $(i,j)$ is actively burning if and only if $(i,j)$ is at distance $\tau$ from $c$ and $\frac{-3(\tau+1)}2 < j < 2+\frac{3(\tau+1)}2$ while $i<-15\tau-13$.
\end{observation}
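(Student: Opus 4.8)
The plan is to start from the exact description of the burning set after turn $32\tau+27$ given by Lemma~\ref{striplemma} and apply the definition directly: a burning vertex is actively burning exactly when it has a \emph{vulnerable} neighbor, that is, a neighbor that is unprotected and not on fire. Abbreviate the three conditions of Lemma~\ref{striplemma} as (I) $\dist(f,v)\le 16\tau+13$, (II) $-4+3i<j<6-3i$, and (III) $-\frac{3(\tau+1)}2<j<2+\frac{3(\tau+1)}2$, so that the burning set $F$ consists of the vertices satisfying all three and every non-burning vertex fails at least one. I would then prove two things: first, that the only vulnerable vertices lie just beyond the distance wall (I) on the left of the strip; and second, that the burning vertices adjacent to them are precisely the stated sphere of radius $\tau$ about $c$.

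I expect the main obstacle to be showing that the three protective walls seal the strip, so that (II) and (III) cannot be crossed by the fire. Concretely, I would show that if a burning vertex $(i,j)$ has a non-burning neighbor failing (III), then that neighbor lies on one of the parallel rays $j=-\frac{3(\tau+1)}2$ or $j=2+\frac{3(\tau+1)}2$ built in Section~\ref{protecc}, while a non-burning neighbor failing (II) lies on one of the bent rays $j=-4+3i$ or $j=6-3i$ built in Sections~\ref{protecc2/3}~and~\ref{acc}; in either case it is protected and hence not vulnerable. This is a finite case analysis, organized by which of the three neighbor directions of $(i,j)$ one follows and governed by the adjacency structure of the grid, the explicit coordinates of the protected vertices, and the distance formula~\eqref{equation distance in hex grid}; the delicate points are the top-left and bottom-left corners, where a parallel ray meets the distance wall, and here one uses that the strategy runs with $\tau\ge 3$ so that the inner neighbor of a corner vertex is still burning rather than free. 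Granting the sealing, any vulnerable vertex satisfies (II) and (III) but fails (I); since adjacent vertices differ in distance from $f$ by exactly $1$, the vulnerable vertex lies at distance $16\tau+14$ and its burning neighbor $v$ satisfies $\dist(f,v)=16\tau+13$, and conversely every such frontier vertex has an unprotected, non-burning neighbor one step further out. Thus the actively burning vertices are exactly the vertices of $F$ with $\dist(f,v)=16\tau+13$.

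It remains to re-express this frontier in terms of $c=(-15\tau-13,0)$. Because $\tau$ is odd, $15\tau+13$ is even, and~\eqref{equation distance in hex grid} gives $\dist(f,c)=15\tau+13$; moreover $c$ is a grid vertex of the same type as $f$, so translation by $c$ is an automorphism of the grid. For $v=(i,j)$ with $i\le -15\tau-13$ a short computation with~\eqref{equation distance in hex grid}---valid since the second entry of the maximum dominates once $|i|$ is this large---shows the distances add, $\dist(f,v)=\dist(f,c)+\dist(c,v)$, so that $\dist(f,v)=16\tau+13$ becomes $\dist(c,v)=\tau$. Finally I would check that every frontier vertex of $F$ has $i<-15\tau-13$ strictly: within the strip the strict bound (III) keeps $|j|$ small enough that $\dist(f,v)=16\tau+13$ forces $|i|>15\tau+13$, which for $i<0$ means $i<-15\tau-13$, ruling out $i\ge -15\tau-13$. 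Combining the sealing conclusion, the additive identity, and the retained condition (III) yields exactly the characterization: $(i,j)$ is actively burning if and only if $\dist(c,v)=\tau$, $-\frac{3(\tau+1)}2<j<2+\frac{3(\tau+1)}2$, and $i<-15\tau-13$.
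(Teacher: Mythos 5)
Your overall route is the same as the paper's, just with the reasoning made explicit: the paper's proof is a two-sentence assertion that, by Lemma~\ref{striplemma}, the actively burning vertices form the two line segments from $(-16\tau-13,-1)$ to $(\frac{-31\tau-25}{2},\frac{3\tau+1}{2})$ and from $(-16\tau-13,-1)$ to $(\frac{-31\tau-27}{2},\frac{-3\tau+1}{2})$, all at distance $\tau$ from $c$. Your sealing step is sound, and can even be shortened: Lemma~\ref{striplemma} already records that the protected vertices \emph{separate} the relevant balls into two regions, so an unprotected neighbor of a burning vertex automatically lies in the region containing $f$, i.e.\ satisfies (II) and (III); no case analysis over the three neighbor directions is needed. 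Identifying the actively burning set with the distance-$(16\tau+13)$ frontier inside the strip, and the additivity $\dist(f,v)=\dist(f,c)+\dist(c,v)$, are both correct, though for the additivity you also need the maximum in the shifted formula~\eqref{equation distance to center spiral} to be attained at its second entry; at the top end of the frontier the two entries are equal, so the justification ``the second entry dominates once $|i|$ is large'' applies to $\dist(f,v)$ but not automatically to $\dist(c,v)$ and has to be checked.

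The genuine problem is your last step. The claim that every frontier vertex has $i<-15\tau-13$ strictly fails for $\tau=1$: the topmost frontier vertex is $\left(\frac{-31\tau-25}{2},\frac{3\tau+1}{2}\right)$, whose first coordinate equals $-15\tau-13+\frac{1-\tau}{2}$, i.e.\ exactly $-15\tau-13$ when $\tau=1$. Concretely, for $\tau=1$ the vertex $(-28,2)$ is burning at distance $29=16\tau+13$ from $f$ and at distance $1=\tau$ from $c=(-28,0)$, and its neighbor $(-29,3)$ is unprotected and unburnt after turn $59$, so $(-28,2)$ is actively burning even though $i=-15\tau-13$. Thus the strict inequality in the statement cannot be established in that case; your bound ``(III) keeps $|j|$ small enough that $|i|>15\tau+13$'' has margin $\frac{\tau-1}{2}$, which degenerates to $0$ at $\tau=1$. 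This is a defect of the observation as stated rather than of your method --- the paper's own proof lists that vertex as an endpoint of the actively burning segment --- and it is harmless downstream, since Section~\ref{spiral} only uses that every actively burning vertex is at distance $\tau$ from $c$. For $\tau\ge 3$ your computation goes through and the observation holds as written.
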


\begin{proof}
By Lemma \ref{striplemma}, the vertices that are actively burning are the vertices on the line segments $(-16\tau-13,-1)$ to $(\frac{-31\tau-25}2,\frac{3\tau+1}2)$, and $(-16\tau-13,-1)$ to $(\frac{-31\tau-27}2,\frac{-3\tau+1}2)$. All of these vertices are at distance $\tau$ from $c$.
\end{proof}

When we build the protective spiral, we will do so in such that way that for every $s\geq 0$, on turn $2s+32\tau+28$ we protect a vertex $v_{s+16\tau+14}$ with $\dist(v_{s+16\tau+14},c)=\tau+s+1$. By Observation \ref{observation activeburning}, on turn $32\tau+28$ every actively burning vertex is at distance $\tau$ from $c$, so the placement of $v_{s+16\tau+14}$ will be a legal move. We start by noting that a shifted variant of Equation~\eqref{equation distance in hex grid} holds: Given a vertex $v=(i,j)$, the distance $\dist(c,v)=d$ in the hexagonal grid if and only if

\begin{equation}\label{equation distance to center spiral}
\max\left\{\frac{|2j - (d \bmod 2)|}{3},|i+15\tau+13|+\frac{|j+(d\bmod 2)|}{3}\right\} = d.
\end{equation}

The spiral is built by initially bending the lower ray $30\degree$ clockwise, and then bending it $60\degree$ clockwise at three later points in time. The initial $30\degree$ bend will occur at the vertex $(\frac{-31\tau-27}2,\frac{-3\tau-3}2)$, while the subsequent $60\degree$ bends will occur at the vertices $(-17\tau-15,0)$, $(-17\tau-15,6\tau+6)$, and $(-11\tau-9,12\tau+12)$. 

We first protect the vertices on the line segment $L_1$, which runs from $(\frac{-31\tau-27}2,\frac{-3\tau-3}2)$ to $(-17\tau-15,0)$, then we protect the vertices on the line segment $L_2$, from $(-17\tau-15,0)$ to $(-17\tau-15,6\tau+6)$, then the vertices on the line segment $L_3$, from $(-17\tau-15,6\tau+6)$ to $(-11\tau-9,12\tau+12)$, and finally, the vertices on the line segment $L_4$, from $(-11\tau-9,12\tau+12)$ to $(\frac{-\tau-3}2,\frac{3\tau+9}2)$. Note that if we were to extend this final line segment by one vertex, this vertex would be $(\frac{-\tau-1}2,\frac{3\tau+7}2)=(1-\frac{\tau-1}2,3+\frac{3(\tau+1)}2)$, which was protected in Section \ref{acc}. Hence, as long as the vertices along these line segments are indeed legal moves, the spiral has collided with the upper ray we built in sections \ref{acc} and \ref{protecc}, so we have successfully contained the fire. We now list the specific vertices which will be protected at each step so we can verify that they indeed are at the correct distances from $c$. Figure \ref{full figure} shows the end state we will reach after protecting the last vertex in $L_4$.

\begin{figure}[ht]
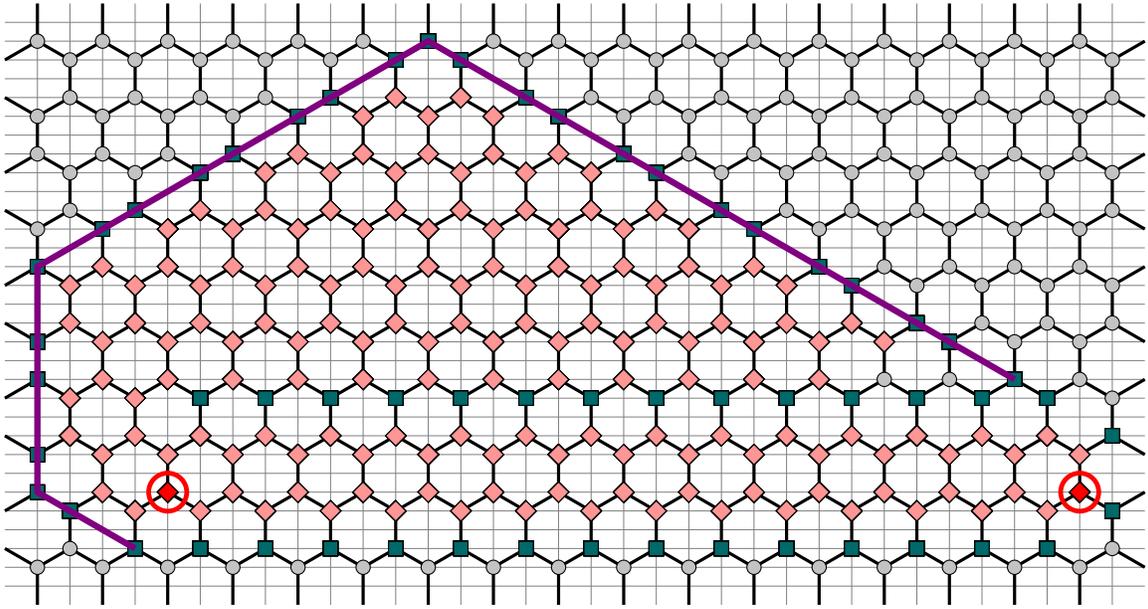

\centering

\caption{A complete picture of our strategy for $\tau = 1$, once the last vertex in $L_4$ has been protected. $f$ and $c$ are circled. The line segments $L_1,L_2,L_3$, and $L_4$ are drawn in violet.}
\label{full figure}
\end{figure}

Note that $L_1$ consists of $\tau+2$ vertices, so for $0\leq k\leq \frac{\tau-1}2$, on turn $4k+32\tau+28$, we will protect $v_{2k+16\tau+14}:=(\frac{-31\tau-27}2-3k,\frac{-3\tau-3}2+3k)$ and on turn $4k+32\tau+30$, we will protect $v_{2k+16\tau+15}:=(\frac{-31\tau-31}2-3k,\frac{-3\tau+1}2+3k)$. Then finally on turn $34\tau+30$, we protect $v_{17\tau+15}:=(-17\tau-15,0)$.

Then the line segment $L_2\setminus L_1$ consists of $2\tau+2$ vertices, so for each $0\leq k\leq \tau$, on turn $4k+34\tau+32$, we protect $v_{2k+17\tau+16}:=(-17\tau-15,6k+2)$, and on turn $4k+34\tau+34$, we protect $v_{2k+17\tau+17}:=(-17\tau-15,6k+6)$. This culminates when we protect $v_{19\tau+17}=(-17\tau-15,6\tau+6)$ on turn $38\tau+34$.

Now, the line segment $L_3\setminus L_2$ has $4\tau+4$ vertices, so for $0\leq k\leq 2\tau+1$, on turn $4k+38\tau+36$, we protect $v_{2k+19\tau+18}:=(-17\tau-13+3k,6\tau+8+3k)$ and on turn $4k+38\tau+38$, we protect $v_{2k+19\tau+19}:=(-17\tau-12+3k,6\tau+9+3k)$. We protect the last vertex on $L_3$ on turn $46\tau+42$ where we protect $v_{23\tau+21}=(-11\tau-9,12\tau+12)$

Finally, the line segment $L_4\setminus L_3$ has $7\tau+5$ vertices, so for $0\leq k\leq \frac{7\tau+3}2$, on turn $4k+46\tau+44$, we protect $v_{2k+23\tau+22}:=(-11\tau-8+3k,12\tau+11-3k)$, and on turn $4k+46\tau+46$, we protect $v_{2k+23\tau+23}:=(-11\tau-6+3k,12\tau+9-3k)$. This indeed finishes with vertex $v_{30\tau+26}=(\frac{-\tau-3}2,\frac{3\tau+9}2)$. Note that if we extended $L_4$ by one more vertex, we would arrive at $(\frac{-\tau-1}2,\frac{3\tau+7}2)=v_{\tau+2}$, so $L_4$ intersects the upper ray, containing the fire. See Figure \ref{Strat Outline} for a depiction of the line segments $L_1,L_2,L_3$ and $L_4$.

Now that we have described the remaining vertices we will protect, we will show that they indeed have the correct distance from $c$, implying that all these moves were legal moves.
 
\begin{lemma}\label{lemma distances from the center of the spiral}
The distance $\dist(c,v_{s+16\tau+14}) = \tau + s + 1$ for every $0\leq s\leq 14\tau+12$.
\end{lemma}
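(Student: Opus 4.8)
The plan is to verify the claim by direct substitution into the shifted distance formula~\eqref{equation distance to center spiral}, treating each of the four line segments $L_1,L_2,L_3,L_4$ (together with the lone corner vertex $v_{17\tau+15}$) as a separate case. The first task is to convert each protection rule, which is indexed by the segment parameter $k$, into the form $v_{s+16\tau+14}$ so that the target distance $d=\tau+s+1$ becomes explicit. For example, the rule $v_{2k+16\tau+14}:=(\tfrac{-31\tau-27}2-3k,\tfrac{-3\tau-3}2+3k)$ on $L_1$ corresponds to $s=2k$, hence $d=\tau+2k+1$, whereas $v_{2k+16\tau+15}$ corresponds to $s=2k+1$ and $d=\tau+2k+2$. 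Since $\tau$ is odd, the first family always has $d$ even (so $d\bmod 2=0$) and the second always has $d$ odd (so $d\bmod 2=1$); the analogous parity pattern holds on $L_2,L_3,L_4$. Recording these parities correctly is what lets us plug the right value of $d\bmod 2$ into~\eqref{equation distance to center spiral}.

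For each vertex I would then evaluate the two quantities inside the maximum in~\eqref{equation distance to center spiral} and show that exactly one of them equals $d$ while the other is at most $d$. Which of the two terms attains $d$ depends on the segment: on $L_1$, $L_2$, and $L_4$ the term $|i+15\tau+13|+\tfrac{|j+(d\bmod 2)|}3$ equals $d$, while on $L_3$ it is the term $\tfrac{|2j-(d\bmod 2)|}3$ that does. The computations are of the same flavor as Observations~\ref{observation distance protecting two thirds} and~\ref{observation distances when restricting to a strip}: after substituting the coordinates, each absolute value opens to a linear function of $\tau$ and $k$, and the resulting expressions collapse to $d$ or to something visibly bounded by $d$. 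The corner vertex $v_{17\tau+15}=(-17\tau-15,0)$ is checked by hand: here $d=2\tau+2$, the first term vanishes, and the second equals $|-2\tau-2|=2\tau+2=d$.

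The main obstacle is purely bookkeeping: resolving the absolute values requires knowing the sign of each argument throughout the stated range of $k$, and these signs are not uniform across the segments. On $L_1$, $L_2$, and $L_4$ the shifted horizontal coordinate $i+15\tau+13$ keeps a constant sign, but on $L_3$ one has $i+15\tau+13=-2\tau+3k$, which is negative for small $k$ and positive for large $k$; this forces a split into the subcases $3k\le 2\tau$ and $3k>2\tau$, and in the second subcase the subordinate term only satisfies the bound $\le d$ (with equality exactly at the shared endpoint $k=2\tau+1$, where the segment meets its final vertex). One must also confirm at each junction that the two parametrizations agree on the boundary vertices (e.g.\ that the $k=\tau$ instance of the $L_2$ rule reproduces $v_{19\tau+17}=(-17\tau-15,6\tau+6)$), so that no vertex is skipped or double-counted. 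Carrying out all eight substitutions and the attendant sign analyses, together with the single corner check, establishes $\dist(c,v_{s+16\tau+14})=\tau+s+1$ for every $0\le s\le 14\tau+12$.
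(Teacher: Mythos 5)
Your proposal is correct and follows essentially the same route as the paper: direct verification of the shifted distance formula \eqref{equation distance to center spiral} segment by segment, which the paper carries out explicitly only for $L_1$ and dismisses for $L_2\cup L_3\cup L_4$ as ``similar.'' Your outline in fact supplies the details the paper omits---correctly identifying which term of the maximum attains $d$ on each segment and the sign change of $i+15\tau+13$ along $L_3$---though be aware that the parity pattern is not literally ``first family even, second family odd'' on every segment (on $L_2$ and $L_4$ the first family has $d$ odd), so the value of $d\bmod 2$ must be recomputed for each segment rather than carried over from $L_1$.
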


\begin{proof}
It suffices to verify Equation \eqref{equation distance to center spiral} for each point with the correct value of $d$. We will do so for the vertices on $L_1$ to show how this could be done, but omit the remaining calculations for brevity.

The vertices in $L_1$ correspond to $0\leq s\leq \tau+1$. When $s=2\ell$ for some $\ell$, the vertex $v_{s+16\tau+14}=(\frac{-31\tau-27}2-3\ell,\frac{-3\tau-3}2+3\ell)$, so
\begin{align*}
\max&\left\{\frac{|2((-3\tau-3)/2+3\ell) - 0|}{3},\left|\left(\frac{-31\tau-27}2-3\ell\right)+15\tau+13\right|+\frac{|((-3\tau-3)/2+3\ell)+0|}{3}\right\}\\
&=\max\left\{\tau+1-2\ell,\tau+1+2\ell\right\}=\tau+s+1,
\end{align*}
and when $s=2\ell+1$, the vertex $v_{s+16\tau+14}=(\frac{-31\tau-31}2-3\ell,\frac{-3\tau+1}2+3\ell)$, so
\begin{align*}
    \max&\left\{\frac{|2((-3\tau+1)/2+3\ell) - 1|}{3},\left|\left(\frac{-31\tau-31}2-3\ell\right)+15\tau+13\right|+\frac{|((-3\tau+1)/2+3\ell)+1|}{3}\right\}\\
    &=\max\left\{\tau-2\ell,\tau+2\ell+2\right\}= \tau+s+1.
\end{align*}
All other vertices in $L_2\cup L_3\cup L_4$ can be similarly verified.
\end{proof}

Lemma \ref{lemma distances from the center of the spiral} shows that each move in Section \ref{spiral} was legal, which completes the proof of Theorem \ref{theorem main theorem}.

\section{Birth Sequence Trees and Forests}
Here we study $k$-containability for birth sequence trees and forests. We will use the following definitions through this section. The operation $\odot$ refers to the component-wise multiplication of vectors. For example, $[2,3,5] \odot [3,5,7] = [6,15,35]$. The $\odot$ can be omitted: $\vec{x}\vec{y} = \vec{x} \odot \vec{y}$. And $\bigodot_{i=1}^n\vec{v_i} = \vec{v_i} \odot \vec{v_2} \odot \dots \odot \vec{v_n}$. All operations on vectors are component-wise. For example, let $\vec{x}$ and $\vec{y}$ be $m$-component vectors. $\max\{\vec{x},\vec{y}\}$ results in the $m$-component vector $\vec{z}$, such that the $j^{th}$ component of $\vec{z}$ is the maximum of the $j^{th}$ component of $\vec{x}$ and $\vec{y}$. And $\vec{x} \leq \vec{y}$ implies that for each $j$, the $j^{th}$ component of $\vec{x}$ is less than or equal to the $j^{th}$ component of $\vec{y}$. Further, we define $\bigodot_{i=1}^0\vec{x_i} = \vec{1}$, the vector with all components equal to $1$, and with as many components as $x_i$.

\subsection{Infinite Birth Sequence Trees}
The following two lemmas will simplify the analysis of firefighting on trees. The first lemma was originally observed by MacGillivray and Wang.

 \begin{lemma}[\cite{MW2003}]\label{lemma hotstrat}
If there exists a strategy that contains a fire on a forest $F$, then there exists a hot strategy that contains a fire on $F$. 
 \end{lemma}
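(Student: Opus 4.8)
The plan is to start from an arbitrary containing strategy and repeatedly ``pull'' each protection toward the fire until every protected vertex is vulnerable at the moment it is protected, checking at each step that the set of saved vertices never shrinks. The key structural feature of a forest with burning roots is that protecting a vertex $v$ at any turn saves exactly the subtree hanging below $v$, and that along the unique path from a root to $v$ the subtrees are nested: if $u$ is an ancestor of $v$, then the subtree below $u$ contains the subtree below $v$. I would first isolate this as a one-step monotonicity fact: if one strategy agrees with another at every turn except that at a single even turn it protects an ancestor $u$ in place of $v$ (leaving the other protections of that turn unchanged), then the burning set at every subsequent turn is a subset of what it was before, so every later protection of the original strategy remains a legal move and the final saved set only grows.

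Concretely, I would process the even turns $2,4,6,\dots$ in order, maintaining the invariant that on all earlier turns every protected vertex was vulnerable. At the current turn $2t$, suppose the strategy protects a set $A$ and some $v\in A$ is not vulnerable. If the subtree below $v$ is already cut off from the fire by the past protections together with $A\setminus\{v\}$, then $v$ is already saved and its protection is redundant: I remove it, and if some unprotected vulnerable vertex remains while a firefighter is now free, I reassign that firefighter to such a vertex, which only enlarges the saved set and keeps the turn hot. Otherwise the fire can still reach $v$, so on the root-to-$v$ path the deepest burning vertex $b$ has an unprotected child $c$ on that path; this $c$ is adjacent to $b$ and hence vulnerable, and $c$ is a proper ancestor of $v$, so the subtree below $c$ contains $v$. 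Replacing $v$ by $c$ in $A$ then protects a vulnerable vertex and, by the monotonicity fact, keeps the fire contained. Applying this to each non-vulnerable element of $A$ makes turn $2t$ hot while weakly enlarging the saved set, so the invariant extends to turn $2t$ and I continue to the next turn. Since components of the forest do not interact, this per-vertex correction is unaffected by there being several trees.

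Finally I would address termination. Because the strategy contains the fire, the fire spreads only finitely many times; once it is contained, every neighbor of a burning vertex is itself burning or protected, so from some turn onward there are no actively burning vertices and hence no vulnerable vertices, and a hot strategy simply makes no further protections. Thus only finitely many turns carry protections, the turn-by-turn correction terminates, and the resulting strategy is hot and still contains the fire. The step I expect to be the main obstacle is making the monotonicity fact fully rigorous: I must verify that pulling a single protection up to an ancestor keeps every later move of the original strategy legal (it still protects a non-burning vertex) and that no saved vertex is lost, i.e. that any later protection now falling inside the already-saved subtree was only saving a subset of what is already guaranteed. Both follow from the nestedness of subtrees together with the fact that the burning set is monotone decreasing in the set of protected vertices, but it is exactly this monotonicity that must be argued carefully rather than merely asserted.
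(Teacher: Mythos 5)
The paper does not actually prove this lemma: it is imported from MacGillivray and Wang with only a citation, so there is no in-paper argument to compare yours against. Your proposal is essentially the standard proof of that result, and it is sound in the setting where the paper uses the lemma (each tree of the forest rooted at its unique initially burning vertex). The one step you rightly flag as delicate --- that replacing a protection of $v$ by a protection of a vulnerable proper ancestor $c$ never enlarges any later burning set --- does go through, but not by bare ``monotonicity of the burning set in the protected set,'' since the new cumulative protected set is \emph{not} a superset of the old one ($v$ has been dropped). The clean inductive invariant is: from the modification turn onward, the new burning set is contained in the old one \emph{and} is disjoint from the subtree $T_c$ rooted at $c$. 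The base case holds because $c$, being vulnerable, is not burning, hence no descendant of $c$ is burning; the inductive step holds because the only vertex of $T_c$ with a neighbor outside $T_c$ is $c$ itself, which is now protected, so fire can never enter $T_c$ --- in particular $v$ never burns even though it is no longer protected, which is exactly what rules out the one vertex that the naive set comparison cannot control. With that invariant, every later move of the original strategy remains legal (it protects a vertex that is non-burning under the old run, hence under the new one) and the set of ever-burning vertices only shrinks, so containment is preserved. Two bookkeeping points to make explicit in a full write-up: (i) if the replacement vertex $c$ already lies in $A\setminus\{v\}$, or two protections of the same turn get mapped to the same $c$, you simply free a firefighter and reassign or discard it, which is harmless because extra protections only shrink burning sets; and (ii) your argument uses that each tree has a single fire source at its root, so that fire reaches any vertex along a unique path --- this matches every application in the paper, but the statement would need more care for an arbitrary initial fire set in a forest, where a vertex can be threatened from two directions and protecting it no longer saves exactly the subtree below it.
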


The next lemma allows us to exploit the symmetry inherent in birth sequence trees.

\begin{lemma}\label{lemma birth sequence any hot strategy for trees}
When firefighting on a birth sequence tree with a burning root using a hot strategy, it does not matter which vertices are protected.
\end{lemma}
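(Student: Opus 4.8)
The plan is to make precise the informal claim that ``it does not matter which vertices are protected'' by showing that, for any hot strategy, the number of burning vertices after each turn---and therefore whether the fire is ultimately contained---is completely determined by the sequence recording how many firefighters are deployed on each turn, independent of the identities of the protected vertices. To this end I would set up a structural invariant tracking the number $b_\ell$ of burning vertices at depth $\ell$, and prove by induction on the depth reached by the fire that $b_\ell$ obeys a recurrence depending only on the birth sequence and the firefighter counts.

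First I would record the two consequences of the birth sequence hypothesis that drive everything: every vertex at depth $\ell$ has exactly $d_\ell$ children, and the subtree rooted at any depth-$\ell$ vertex is isomorphic to that rooted at any other depth-$\ell$ vertex. Next I would establish the key structural fact that, under a hot strategy, the fire front stays \emph{level}: after the fire has spread to depth $\ell$, the burning vertices of maximal depth are precisely some collection of depth-$\ell$ vertices, and these are the only actively burning vertices. The reason is that a burning vertex $v$ at depth $\ell' < \ell$ has all of its children resolved---each child is either burning or protected---once the fire passed depth $\ell'+1$, so $v$ has no vulnerable neighbour and is not actively burning. Consequently the vulnerable vertices on the next even turn are exactly the children of the current frontier, of which there are exactly $b_\ell d_\ell$. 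Crucially, none of these depth-$(\ell+1)$ vertices could have been protected earlier, since before the fire reached depth $\ell$ their parents were not burning and so they were never vulnerable; this is precisely where the hot hypothesis is used.

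Given this, the inductive step is immediate. If $p_{\ell+1}$ firefighters are deployed on the turn protecting depth-$(\ell+1)$ vertices, then regardless of which $p_{\ell+1}$ of the $b_\ell d_\ell$ vulnerable children are chosen, exactly $b_{\ell+1} := b_\ell d_\ell - p_{\ell+1}$ of them catch fire. Thus $b_\ell$ is governed by the recurrence $b_0 = 1$, $b_{\ell+1} = b_\ell d_\ell - p_{\ell+1}$, which depends only on the birth sequence and the counts $p_{\ell+1}$, not on any choice. Summing the $b_{\ell'}$ over $\ell' \le \ell$ shows the total number of burning vertices after any turn is determined, and containment---the eventual vanishing of the frontier, i.e. $b_\ell = 0$ for some $\ell$---depends only on the firefighter counts. (As a sanity check, taking $p_{\ell+1} = k$ for all turns and solving the recurrence recovers exactly the inequality appearing in Theorem \ref{theorem infinite trees}.) If one wants the stronger statement that the entire configuration is determined up to symmetry, I would upgrade the induction to carry the claim that the burning subgraph together with the forest of saved subtrees is determined up to an automorphism of the tree, using the birth sequence isomorphism to extend any reassignment of which children are protected to such an automorphism.

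I expect the main obstacle to be the bookkeeping in the structural step: verifying that the frontier stays level and that the vulnerable vertices are exactly the children of the frontier, with no stragglers at shallower or deeper depths. This is what reduces every decision to a choice among symmetric, isomorphic subtrees. Once that symmetry is in place, the choice-independence of all subsequent counts follows formally from the birth sequence property, and the recurrence makes ``it does not matter'' quantitatively precise.
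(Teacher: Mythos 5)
Your argument is correct and is essentially the paper's proof, made more explicit: both rest on the observations that under a hot strategy the vulnerable vertices at each even turn are exactly the children of the current depth-$\ell$ fire frontier, and that the subtrees rooted at these children are pairwise isomorphic by the birth sequence property, so only the number protected (not the choice) affects the resulting configuration. The recurrence $b_{\ell+1}=b_\ell d_\ell - p_{\ell+1}$ you extract is precisely the one the paper then uses in the proof of Theorem \ref{theorem infinite trees}.
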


\begin{proof} Let $T$ be a birth sequence tree with a burning root, and assume we use a hot strategy. Let $t \in \mathbb{N}$. On turn $2t$, The fire has spread $t - 1$ times, so any vertex $v$ at depth less than $t$ is on fire, protected, or saved by a protected vertex blocking the path between $v$ and the root. Regardless of which vertices we protect on turn $2t$, after the fire spreads on turn $2t+1$, the same number of isomorphic subtrees will be left unprotected. Thus, it does not matter which vulnerable vertices are protected.
\end{proof}


Next, we prove Theorem \ref{theorem infinite trees}, which is a general necessary and sufficient condition for $k$-containability for infinite birth sequence trees. 

\begin{proof}[Proof of Theorem \ref{theorem infinite trees}]
Let $T$ be an infinite rooted tree generated by the birth sequence $d_0,d_1,d_2,\dots$, with its root initially on fire. Via Lemma \ref{lemma hotstrat}, it suffices to only consider hot strategies. By Lemma \ref{lemma birth sequence any hot strategy for trees}, it does not matter which vertices we protect as long as we follow a hot strategy. If the number of vertices on fire at depth $t$ is $f_t$, then the number of vertices that will be on fire at depth $t + 1$ is given by $\max\{0,(f_t\cdot d_t) - k\}$. This is because each of the $f_t$ burning vertices will have $d_t$ children, all of which are vulnerable, and by protecting at most $k$ of these, we will end up with $(f_t\cdot d_t) - k$ or $0$ burning vertices at depth $t + 1$, whichever is less. This gives a recursive sequence: $f_0 = 1$, and for all $t \in \mathbb{N}$, $f_t = \max\{0,(f_{t - 1}\cdot d_t) - k\}$, which has solution
\[
f_t = \max\left\{0,\prod_{i=0}^{t-1}d_i - k\left(1+\sum_{i=1}^{t-1}\prod_{j=i}^{t-1}d_j\right)\right\}.
\] 
If there exists $t \in \mathbb{Z}_{\geq0}$ such that 
\[
\prod_{i=0}^td_i - k\left(1+\sum_{i=1}^t\prod_{j=i}^td_j\right)\leq 0,
\]
then $f_{t+1} = 0$, which means the fire is contained at depth $t + 1$. Otherwise, for all $t \in \mathbb{N}$, we have $f_t \geq 1$, which implies that the fire spreads indefinitely.
\end{proof}

\subsection{Infinite Birth Sequence Forests}
Here we consider forests made up of finitely many infinite birth sequence trees. Suppose we have $m$ infinite birth sequence trees, labeled $1$ through $m$, each with its root on fire. Let $\vec{d}_0,\vec{d}_1,\vec{d}_2,\dots$ be an infinite list of $m$-component vectors such that the $j^{th}$ component of $\vec{d}_i$ is the number of children that vertices of Tree $j$ at depth $i$ have. Lemma \ref{lemma hotstrat} still applies in this case, but Lemma \ref{lemma birth sequence any hot strategy for trees} does not, so we may need to consider different hot strategies. A hot strategy in this case can be expressed as a sequence of $m$-component vectors of non-negative integers $\vec{p}_1,\vec{p}_2,\dots$ such that the $j^{th}$ component of $\vec{p}_i$ indicates the number of firefighters placed on Tree $j$ at depth $i$. If we are considering $k$-containability, then we have $||\vec{p}_i||_1 \leq k$ for all $i$ since we use at most $k$ firefighters per turn. We will call the sequence $\vec{p}_1,\vec{p}_2,\dots$ a \textbf{strategy sequence}.


\begin{theorem}\label{theorem infinite forests}
Let $F$ be the forest of infinite rooted trees generated by the birth sequence vectors $\vec{d}_0,\vec{d}_1,\vec{d}_2,\dots$, where each tree has its root on fire. Then $F$ is $k$-containable if and only if there exists some $t \in \mathbb{Z}_{\geq 0}$ and non-negative integer vectors $\vec{p}_1,\vec{p}_2,\dots,\vec{p}_{t+1}$ with $||\vec{p}_i||_1 = k$ for all $i \in [t+1]$ such that 
\[
\bigodot_{i=0}^t\vec{d}_i - \left(\sum_{i=1}^{t} \vec{p}_i \bigodot_{j=i}^t \vec{d}_j \right) - \vec{p}_{t+1} \leq \vec{0}.
\]
\end{theorem}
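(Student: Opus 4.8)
The plan is to reduce the containment question to a recurrence for the vector $\vec{f}_t$ whose $j$-th component counts the burning vertices of Tree $j$ at depth $t$, and then to recognize the displayed inequality as the unfolded form of that recurrence. By Lemma \ref{lemma hotstrat} it suffices to consider hot strategies, which in the forest setting are exactly the strategy sequences $\vec{p}_1,\vec{p}_2,\dots$ with $\|\vec{p}_i\|_1 \le k$ and $\vec{p}_i \le \vec{f}_{i-1}\odot\vec{d}_{i-1}$ (we only protect vulnerable vertices). Starting from $\vec{f}_0=\vec{1}$ and placing $\vec{p}_i$ at depth $i$ leaves $\vec{f}_i=\max\{\vec{0},\,\vec{f}_{i-1}\odot\vec{d}_{i-1}-\vec{p}_i\}$ burning at depth $i$. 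The crucial observation is that for a hot strategy the inequality $\vec{p}_i\le\vec{f}_{i-1}\odot\vec{d}_{i-1}$ forces the argument of the max to be non-negative, so the recurrence is in fact linear, $\vec{f}_i=\vec{f}_{i-1}\odot\vec{d}_{i-1}-\vec{p}_i$. Unfolding this (using that $\odot$ distributes and that products of the $\vec{d}_j$ are non-negative) gives
\[
\vec{f}_{t+1}=\bigodot_{i=0}^t\vec{d}_i-\sum_{i=1}^t\vec{p}_i\bigodot_{j=i}^t\vec{d}_j-\vec{p}_{t+1},
\]
so $\vec{f}_{t+1}$ is precisely the left-hand side of the theorem.

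For the ``only if'' direction, suppose $F$ is $k$-containable and fix, via Lemma \ref{lemma hotstrat}, a hot containing strategy. Since the trees are infinite (so every $\vec{d}_i\ge\vec{1}$), any tree whose fire never dies out burns infinitely many vertices; hence containment forces a finite depth $T$ with $\vec{f}_T=\vec{0}$, obtained by taking the maximum over the $m$ trees of the depth at which each coordinate first reaches $0$ (noting that $0$ is absorbing in the recurrence). Setting $t=T-1$, the linear identity above gives $\bigodot_{i=0}^t\vec{d}_i-\sum_{i=1}^t\vec{p}_i\bigodot_{j=i}^t\vec{d}_j-\vec{p}_{t+1}=\vec{0}\le\vec{0}$. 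The hot strategy only guarantees $\|\vec{p}_i\|_1\le k$, so to meet the required $\|\vec{p}_i\|_1=k$ I will pad each $\vec{p}_i$ by adding firefighters to arbitrary coordinates until equality holds; since every coefficient $\bigodot_{j=i}^t\vec{d}_j$ (and the coefficient $\vec{1}$ of $\vec{p}_{t+1}$) is a non-negative vector, increasing any $\vec{p}_i$ only decreases the left-hand side coordinatewise, preserving $\le\vec{0}$.

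For the ``if'' direction, suppose the displayed inequality holds for some $t$ and some $\vec{p}_1,\dots,\vec{p}_{t+1}$ with $\|\vec{p}_i\|_1=k$. I will run the process using, at depth $i$, the placement that protects $\min\{(\vec{p}_i)_\ell,(\vec{f}_{i-1}\odot\vec{d}_{i-1})_\ell\}$ vulnerable vertices in Tree $\ell$; this is a legal strategy protecting at most $k$ vertices per turn, and its burning vector is exactly $\vec{f}_i=\max\{\vec{0},\vec{f}_{i-1}\odot\vec{d}_{i-1}-\vec{p}_i\}$. Comparing with the purely linear unfolding $\vec{h}_i$ defined by $\vec{h}_0=\vec{1}$ and $\vec{h}_i=\vec{h}_{i-1}\odot\vec{d}_{i-1}-\vec{p}_i$, a short induction shows $\vec{f}_i\le\max\{\vec{0},\vec{h}_i\}$ coordinatewise: in a coordinate where the entry of $\vec{h}_{i-1}$ is negative we must have $f_{i-1}=0$, whence $f_i=0$, and where it is non-negative the monotonicity of $\odot\vec{d}_{i-1}$ pushes the bound through. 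Since $\vec{h}_{t+1}$ is exactly the left-hand side of the theorem, the hypothesis gives $\vec{h}_{t+1}\le\vec{0}$, so $\vec{f}_{t+1}\le\max\{\vec{0},\vec{h}_{t+1}\}=\vec{0}$, i.e. $\vec{f}_{t+1}=\vec{0}$. Then no vertex below depth $t+1$ ever burns, only finitely many vertices burn, and the fire is contained.

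I expect the main obstacle to be the careful bookkeeping around the $\max$ in the recurrence: showing that hot strategies eliminate it (for necessity, giving the clean linear identity $\vec{f}_{t+1}=\vec{h}_{t+1}$), while the coordinatewise bound $\vec{f}_i\le\max\{\vec{0},\vec{h}_i\}$ tames it when a supplied $\vec{p}$ over-protects (for sufficiency), together with justifying that padding to $\|\vec{p}_i\|_1=k$ and the finiteness of the containment depth $T$ are both legitimate.
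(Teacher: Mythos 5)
Your proposal is correct and follows essentially the same route as the paper: both reduce containment to the recurrence $\vec{f}_i=\max\{\vec{0},\vec{f}_{i-1}\odot\vec{d}_{i-1}-\vec{p}_i\}$ and identify the displayed expression as its unfolding, the only difference being that the paper collapses the nested maxima by a single induction while you split the max-handling between the two directions (linearity for hot strategies, the comparison $\vec{f}_i\le\max\{\vec{0},\vec{h}_i\}$ for sufficiency). Your explicit treatment of padding to $\|\vec{p}_i\|_1=k$ and of placements that exceed the number of vulnerable vertices is slightly more careful than the paper's, which glosses over both points.
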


\begin{proof}
The proof of this theorem is similar to the proof of Theorem \ref{theorem infinite trees}. Let $\vec{p}_1,\vec{p}_2,\dots$ be a strategy sequence and let $\vec{f}_t$ be the $m$-component vector with the $j^{th}$ component of $\vec{f}_t$ representing the number of fires that Tree $j$ has at depth $t$. Note that $\vec{f}_t$ is a recursive sequence: $\vec{f}_0 = \vec{1}$, and for all $t \in \mathbb{N}$, 
\[
\vec{f}_t = \max\{\vec{0},(\vec{f}_{t - 1} \odot \vec{d}_{t - 1}) - \vec{p}_{t}\}.
\]
We will show via induction that for all $t \in \mathbb{N}$, 
\begin{equation}\label{equation forests induction}
\vec{f}_t = \max\left\{\vec{0},\bigodot_{i=0}^{t-1}\vec{d}_i - \left(\sum_{i=1}^{t-1} \vec{p}_i \bigodot_{j=i}^{t-1} \vec{d}_j \right) - \vec{p}_{t}\right\}.
\end{equation}
Note that 
\[
\vec{f}_1 = \max\{\vec{0},(\vec{f}_{0} \odot \vec{d}_{0}) - \vec{p}_{1}\} = \max\{\vec{0},\bigodot_{i=0}^{0}\vec{d}_i - \left(\sum_{i=1}^{0} \vec{p}_i \bigodot_{j=i}^{0} \vec{d}_j \right) - \vec{p}_{1}\}.
\]
Now, assume that Equation \eqref{equation forests induction} holds for some $t \in \mathbb{N}$. Then we have 
\begin{align*}
\vec{f}_{t+1} &= \max\{\vec{0},(\vec{f}_t \odot \vec{d}_t) - \vec{p}_{t+1}\}\\
&=\max\left\{\vec{0},\left(\max\left\{\vec{0},\bigodot_{i=0}^{t-1}\vec{d}_i - \left(\sum_{i=1}^{t-1} \vec{p}_i \bigodot_{j=i}^{t-1} \vec{d}_j \right) - \vec{p}_{t}\right\} \odot \vec{d}_t\right) - \vec{p}_{t+1}\right\}\\
&=\max\left\{\vec{0},\max\left\{\vec{0} \odot \vec{d}_t - \vec{p}_{t+1},\left(\bigodot_{i=0}^{t-1}\vec{d}_i - \left(\sum_{i=1}^{t-1} \vec{p}_i \bigodot_{j=i}^{t-1} \vec{d}_j \right) - \vec{p}_{t}\right)\vec{d}_t - \vec{p}_{t+1}\right\}\right\}\\
&=\max\left\{\vec{0},\max\left\{-\vec{p}_{t+1},\bigodot_{i=0}^{t}\vec{d}_i - \vec{d}_t\left(\sum_{i=1}^{t-1} \vec{p}_i \bigodot_{j=i}^{t-1} \vec{d}_j \right) - \vec{d}_t\vec{p}_{t}-\vec{p}_{t+1}\right\}\right\}\\
&=\max\left\{\vec{0},\bigodot_{i=0}^{t}\vec{d}_i - \left(\sum_{i=1}^{t-1} \vec{p}_i \bigodot_{j=i}^{t} \vec{d}_j \right) - \vec{d}_t\vec{p}_{t}-\vec{p}_{t+1}\right\}\\
&=\max\left\{\vec{0},\bigodot_{i=0}^{t}\vec{d}_i - \left(\sum_{i=1}^{t} \vec{p}_i \bigodot_{j=i}^{t} \vec{d}_j \right) - \vec{p}_{t+1}\right\}.
\end{align*}
If there exists $t \in \mathbb{Z}_{\geq0}$ such that $\bigodot_{i=0}^t\vec{d}_i - \left(\sum_{i=1}^{t} \vec{p}_i \bigodot_{j=i}^t \vec{d}_j \right) - \vec{p}_{t+1} \leq \vec{0}$, then $\vec{f}_{t+1} = \vec{0}$, which means the fire is contained at depth $t + 1$, and thus $F$ is $k$-containable. On the other hand, if for all strategy sequences there is no such $t$, then the fire spreads forever, so $F$ is not $k$-containable.
\end{proof}

Note that Theorem \ref{theorem infinite trees} is a special case of Theorem \ref{theorem infinite forests}, where the vectors are all one-dimensional, and therefore all strategy vectors $\vec{p}_i$ are exactly $[k]$.

\subsection{Finite Birth Sequence Forests}
Theorem \ref{theorem infinite forests} can also be used to imply results about finite forests. Here we consider forests of $m$ finite birth sequence trees with depth $n$, generated by birth sequence vectors $\vec{d}_0,\vec{d}_1,\dots,\vec{d}_{n-1}$. Since the question of $k$-containability is trivial for finite graphs, in this section instead we will consider the question of if $k$ firefighters can stop the spread of the fire before any leafs are on fire. The answer to this question is positive if and only if there exists non-negative integer vectors $\vec{p}_1,\vec{p}_2,\dots,\vec{p}_n$ with $L_1$ norm $k$ such that we have 
\[
\bigodot_{i=0}^{n-1} \vec{d}_i - \bigg(\sum_{i=1}^{n-1}\vec{p}_i\bigodot_{j=i}^{n-1}\vec{d}_j\bigg) - \vec{p}_n \leq \vec{0}.
\]
However, checking this condition by brute force is inefficient, as the number of possible vectors $\vec{p}_1,\vec{p}_2,\dots,\vec{p}_n$ grows exponentially with $n$. We present an efficient pseudopolynomial algorithm for fixed $m$, resulting from the following lemma.

\begin{lemma}\label{lemma too many trees reduction}
Suppose we are firefighting on a forest of $m$ finite birth sequence trees with depth $n$, using $k$ firefighters per turn. If at some point, the number of fires in the forest at depth $t$ exceeds $k(n-t)$, then it is impossible to save all leaves.
\end{lemma}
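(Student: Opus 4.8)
The plan is to prove the contrapositive in a quantitative form: I will track the \emph{total} number of burning vertices across the whole forest as the fire descends and show that this total can drop by at most $k$ each time the fire advances one level. Writing $F_t := ||\vec{f}_t||_1$ for the total number of vertices on fire at depth $t$ (summed over all $m$ trees, where $\vec{f}_t$ is the fire vector from the proof of Theorem \ref{theorem infinite forests}), the heart of the argument is the single-level inequality
\[
F_{t+1} \ge F_t - k \qquad \text{for every } 0 \le t < n.
\]

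First I would reduce to hot strategies. By Lemma \ref{lemma hotstrat} it suffices to consider a hot strategy: if some strategy saves all leaves then a hot one does as well, i.e.\ pre-emptively protecting vertices away from the fire front never helps. The point of working with a hot strategy is that on the turn just before the fire spreads from depth $t$ to depth $t+1$, every protected vertex lies on the current front, hence at depth $t+1$, and at most $k$ of them are protected on that turn.

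Next I would establish the single-level bound. Because every tree has depth exactly $n$, each birth-sequence vector $\vec{d}_i$ with $0 \le i \le n-1$ satisfies $\vec{d}_i \ge \vec{1}$ component-wise, since a level containing a childless vertex could not support any descendant at depth $n$. Hence every burning vertex at depth $t<n$ has at least one child, so $\vec{f}_t \odot \vec{d}_t \ge \vec{f}_t$ and therefore $||\vec{f}_t \odot \vec{d}_t||_1 \ge F_t$. Since at most $k$ of these vulnerable children are protected (shared across all $m$ trees), at least $F_t - k$ of them ignite at depth $t+1$, which is exactly $F_{t+1} \ge F_t - k$.

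Finally I would iterate. Applying the bound from depth $t$ down to depth $n$ gives $F_n \ge F_t - k(n-t)$. Under the hypothesis $F_t > k(n-t)$ this forces $F_n > 0$, so some vertex at depth $n$ is on fire; as the depth-$n$ vertices are precisely the leaves, not all leaves can be saved. The only genuinely delicate point is the reduction to hot strategies: for a non-hot strategy the per-level loss can exceed $k$, since early firefighters may be banked on deep vertices, so I must make sure the objective of saving all leaves---a target-set objective---is indeed governed by Lemma \ref{lemma hotstrat} (the standard delay/exchange argument of MacGillivray and Wang). Once that is granted, the remaining steps are a one-line induction together with careful $L_1$ bookkeeping of the shared firefighters across the $m$ trees.
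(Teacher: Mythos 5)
Your proof is correct, but it runs along a somewhat different track from the paper's. The paper argues by a single pigeonhole on subtrees: each of the more than $k(n-t)$ burning vertices at depth $t$ roots a subtree whose leaves burn unless at least one firefighter is eventually placed inside it, and only $(n-t)k$ placements remain before the fire reaches depth $n$, so some subtree is never touched. You instead track the potential $F_s = ||\vec{f}_s||_1$ and prove the per-level inequality $F_{s+1} \ge F_s - k$, then iterate. The two arguments are close cousins (both amortize the same budget of $k(n-t)$ remaining firefighters), but yours makes explicit two points the paper leaves implicit: the reduction to hot strategies, without which a bank of firefighters pre-placed deep inside a subtree could defeat either argument, and the fact that $\vec{d}_s \ge \vec{1}$ for $s < n$ (so the fire count cannot shrink on its own and every burning depth-$t$ vertex really does threaten a leaf at depth $n$). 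Your caveat about Lemma \ref{lemma hotstrat} being stated for containment rather than for the target-set objective of saving all leaves is well taken; the standard exchange argument on rooted trees produces a hot strategy whose set of saved vertices contains that of the original strategy, so the reduction is valid for this objective too, and the paper tacitly relies on the same normalization (the surrounding text simply declares that a hot strategy is being used). Net effect: your level-by-level version is a little longer but more airtight; the paper's is a one-sentence pigeonhole that presupposes the hot-strategy normalization.
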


\begin{proof}
On any turn, at most $k$ trees can have firefighters placed on them, so if there is a point in time where there are more than $k(n-t)$ subtrees with actively burning roots and only $n-t$ even turns left before the fire reaches the leaves, one of the subtrees is guaranteed to have its leaves burnt.
\end{proof}

We will construct a state graph which will be helpful in determining if a successful strategy sequence exists. As we are firefighting with a hot strategy, we can describe the state we are at before placing a firefighter with a tuple $(t,\vec{f})$, where $t\in \{0,1,\dots,n\}$ and $\vec{f}$ is an $m$-component non-negative integer vector. At the state $(t,\vec{f})$, the fire has spread $t$ times, and $f_j$ is the number of fires in tree $j$ at depth $t$. The beginning state is $(0,\vec{1})$. We can successfully contain the fire if and only if we can reach the end state $(n,\vec{0})$. By Lemma \ref{lemma too many trees reduction}, any state $(t,\vec{f})$ such that $||\vec{f}||_1 > k(n-t)$ cannot lead to the successful end state, and thus such states can be omitted from our state graph. We construct a state graph $G=(V,E)$ of all viable states by following Algorithm \ref{algorithm state graph}.

{
\begin{algorithm2e}[htp!]
\caption{Constructing The State Graph}\label{algorithm state graph}
\SetKw{Continue}{continue}
\SetKw{And}{and}
$P \gets \{\vec{v}\in (\mathbb{Z}_{\geq 0})^m\ :\ ||\vec{v}||_1 = k\}$\;
$V \gets \{(0,\vec{1})\} \cup \{(t,\vec{f}) \in \{1,\dots,n\}\times (\mathbb{Z}_{\geq 0})^m\ :\ ||\vec{f}||_1 \leq k(n-t)\}$\;
$E \gets \varnothing$\;
\For{$(t,\vec{f}) \in V \setminus \{(n,\vec{0})\}$}{
    \If {$||\vec{d}_t \odot \vec{f}||_1 \leq k$}{
        add $(t,\vec{f})\rightarrow(t+1,\vec{0})$ to $E$\;
    }\Else {
        \For{$p \in P$}{
            $\vec{g} \gets (\vec{d}_t \odot \vec{f}) - p$\;
            \If{$\vec{g} \geq \vec{0}$ \And $||\vec{g}||_1 \leq k(n-t-1)$}{
                add $(t,\vec{f})\rightarrow(t+1,\vec{g})$ to $E$\;
            }
        }
    }
}
$G \gets (V,E)$\;
\end{algorithm2e}
}
We can use $G$ to check if a successful strategy exists by checking whether $(n,\vec{0})$ is reachable from $(0,\vec{1})$ in $G$. Doing this takes $O(|V| + |E|)$ time, and $G$ can be constructed in $O(m(|V| + |E|))$ time (since states are expressed by $m + 1$ integers), our algorithm to check whether a successful strategy exists takes $O(m(|V| + |E|))$ time. We have $V \subseteq \{0,1,\dots,n\}\times\{0,1,\dots,kn\}^m$, so $|V| \leq n(kn)^m$, and we have $|E| \leq |P||V| = \binom{m + k - 1}{m - 1}|V|$, so our algorithm runs in $O(m\binom{m + k - 1}{m - 1}n(kn)^m)$ time. For fixed $m$, this time complexity is polynomial in the values of $n$ and $k$, so our algorithm runs in pseudopolynomial time, since it is exponential in the number of bits needed to specify $k$.

Lastly, note that while this algorithm runs in exponential time if $m$ is allowed to vary, we only need to consider cases where $m \leq nk$. This is because otherwise, we instantly know that saving all leaves is impossible, even if all $m$ birth sequence trees are paths.

\section{Acknowledgements}
The authors would like to thank the Illinois Geometry Lab for facilitating this research project. This material is based upon work supported by the National Science Foundation under Grant No. DMS-1449269. Any opinions, findings, and conclusions or recommendations expressed in this material are those of the authors and do not necessarily reflect the views of the National Science Foundation.

\bibliographystyle{abbrv}
\bibliography{refs}

\newpage
\subsection*{Appendix: An Algorithm for Containment on the Hexagonal Grid}\vspace{-.2cm}
Below, we provide a full implementation of our strategy for containing the fire in the hexagonal grid with one extra firefighter in code. Our algorithm uses the extra firefighter on turn $2t$, where $t$ is odd.

{
\begin{algorithm2e}[H]
\caption{Containing the Fire with One Extra Firefighter}
\tcp{Section \ref{protecc2/3}}
\For{$k \gets 0$ \KwTo $\frac{t - 3}{2}$}{
    protect $(1-k, -1 - 3k)$\;
    protect $(1-k, 3 + 3k)$\;
}
\tcp{Section \ref{acc} -- Next turn we use the extra firefighter.}
$k \gets \frac{t - 1}{2}$\;
protect $(1-k, -1 - 3k)$ and $(1-k, 3 + 3k)$\;
\tcp{Section \ref{protecc}}
$k \gets \frac{t+1}{2}$\;
\For{$j \gets 0$ \KwTo $15k-2$}{
    protect $(-k-2j, -3k)$\;
    protect $(-k-2j, 2 + 3k)$\;
}
\tcp{Section \ref{spiral}}
$(i,j) \gets (-31k+2,-3k)$\;
\tcp{$(i,j)$ is the vertex we block on the spiral as we build it.}
protect $(i,j)$\;
\For{$n \gets 1$ \KwTo $2k$}{
    $m \gets 2$ if $n$ is odd, and $1$ otherwise\;
    $i \gets i - m$\;
    $j \gets j + m$\;
    protect $(i,j)$\;
}
\For{$n \gets 1$ \KwTo $4k$}{
    $m \gets 1$ if $n$ is odd, and $2$ otherwise\;
    $j \gets j + 2m$\;
    protect $(i,j)$\;
}
\For{$n \gets 1$ \KwTo $8k$}{
    $m \gets 2$ if $n$ is odd, and $1$ otherwise\;
    $i \gets i + m$\;
    $j \gets j + m$\;
    protect $(i,j)$\;
}
\For{$n \gets 1$ to $14k - 2$}{
    $m \gets 1$ if $n$ is odd, and $2$ otherwise\;
    $i \gets i + m$\;
    $j \gets j - m$\;
    protect $(i,j)$\;
}
\end{algorithm2e}
}

\end{document}